\definecolor{darkgreen}{rgb}{0,0.7,0.19}
\newtheorem{remark}{Remark}[section]
\newcommand{\R} {\mathbb{R}}
\newcommand{\C} {\mathbb{C}}
\newcommand{\N} {\mathbb{N}}
\newcommand{\An} {L_n}
\begin{document}
%\title{On the approximation of exponential-type operator functions in Krylov
%subspaces}
\title{Automatic smoothness detection of the resolvent Krylov subspace method for the approximation of \boldmath{$C_0$}-semigroups}

\author{ Volker Grimm 
        \and Tanja G\"{o}ckler\thanks{
        Karlsruhe Institute of Technology (KIT),
        Institut f\"{u}r Angewandte und Numerische Mathematik,
	D--76128 Karlsruhe,
        Germany, {\tt volker.grimm | tanja.goeckler@kit.edu}.} 
}

\maketitle

\begin{abstract} 
The resolvent Krylov subspace method builds approximations to operator functions $f(A)$ times a vector $v$. For the semigroup and related operator functions, this method is proved to possess the favorable property that the convergence is automatically faster when the vector $v$ is smoother. The user of the method does not need to know the presented theory and alterations of the method are not necessary in order to adapt to the (possibly unknown) smoothness of $v$. The findings are illustrated by numerical experiments. 
\end{abstract}

\begin{keywords}
Operator functions, resolvent Krylov subspace method, rational Krylov subspace method, semigroup, $\varphi$-functions, rational approximation. 
\end{keywords}

\begin{AMS} (2010) 65F60, 65M15, 65M22, 65J08.
\end{AMS}

%%%%%%%%%%%%%%%%%%%%%%%%%%%%%%%%%%%%%%%%%%%%%%%
\section{Introduction}  \label{sec:Introduction}
%%%%%%%%%%%%%%%%%%%%%%%%%%%%%%%%%%%%%%%%%%%%%%%

Let $X$ be some Banach space with norm $\|\cdot\|$. For $t \geq 0$, we consider a $C_0$-semigroup $e^{tA}$, which is generated by $A$, applied to some initial data $v \in X$, or more exactly, 
\begin{equation} \label{semisol}
   u(t)=e^{tA}v, \quad v \in X\,, \quad t \geq 0\,.
\end{equation}
Due to a standard rescaling argument (cf. Section~2.2 on page 60 in \cite{engelnagel00}), it suffices to study bounded semigroups, that is, semigroups satisfying $\|e^{tA}\| \leq N$ for all $t \geq 0$. The object of interest \eqref{semisol}
is just the (mild) solution of the abstract linear evolution equation 
\begin{equation} \label{abstract_ode}
u'(t)=Au(t), \quad u(0) = v\,,\quad t \in [0,\infty)\,,
\end{equation}
whose effective approximation is important in many applications, especially for the numerical solution of semilinear evolution equations by either splitting methods (e.g. \cite{Thalhammer08, McLachlanQuispel02}) or exponential integrators (e.g. \cite{hoacta10}).
In order to approximate the solution \eqref{semisol} of the abstract evolution equation in an efficient and reliable way, one has to use a method which leads to an error reduction that is independent of the norm of the matrix representing the discretized operator $A$ (see \cite{HaiWaodestiff96}). Such error bounds can therefore be designated as grid-independent, since the refinement of the grid in space does not deteriorate the convergence in time (cf. \cite{GG13, TanjaDiss}). 

In the case of a matrix or a bounded operator $A$, the basic importance of an efficient approximation to $e^{tA}$ and possible
%, more or less reasonable, 
methods for this problem are well reflected in ``Nineteen dubious ways to compute the exponential of a matrix'' \cite{Dubious78} by Moler and van Loan. The subsequent finding that the standard Krylov subspace approximation can be used for the approximation of the matrix exponential times a vector, $e^{tA}v$, led to an updated version ``Nineteen dubious ways to compute the exponential of a matrix, twenty-five years later'' with the Krylov subspace method as twentieth method (see \cite{Dubioustwen03}).
%{\bf Vorschlag: 
%The development of Krylov subspace methods led to a new version ``Nineteen dubious ways to compute the exponential of a matrix, twenty-five years later'' with the Krylov method as twentiest method (see \cite{Dubioustwen03}). The standard Krylov subspace approximation works very well for matrices with a norm of moderate size.}
Recently, it becomes more and more apparent, that rational Krylov subspace methods constitute a promising twenty-first possibility that is even suitable for matrices with a large norm or unbounded operators. The use of rational Krylov subspaces for the approximation of matrix/operator functions $f(A)$ times $v$ has been studied and promoted, e.g., in \cite{beckermann_guettel12,Beckermann_Reichel09,BoGriHo13,Botchev16,druskin_Zaslavsky12,GG14,GaGriDo96,GG13,grimm_hochbruck08,guettelpole13,HoPaSchuThaWie15,KniDruZas09,lopez_simoncini06,moret15,moretnovati11,moretnovati04,novati11,simoncini06,marlis_jasper}.

%In this paper, we will study the approximation of $e^{tA}v$ corresponding to the abstract equation $\eqref{abstract_ode}$ in the resolvent Krylov subspace spanned by $(\gamma-A)^{-1}$ and $v$. 

In this paper, we will study the approximation of $e^{tA}v$ and products of related operator functions, the so-called $\varphi$-functions, times $v$ in the resolvent Krylov subspace spanned by $(\gamma-A)^{-1}$ and $v$. An efficient approximation of these operator functions is of major importance particularly in the context of exponential integrators.
Our error analysis provides sublinear error bounds for unbounded operators $A$ that translate to error bounds independent of the norm of the discretized operator. That is, the error bounds prove a grid-independent convergence for the discretized problem. Moreover, it turns out that the error reduction correlates with the smoothness of the initial value $v$. A favorable property is that the resolvent Krylov subspace method detects the smoothness of the initial vector by itself and converges the faster the smoother $v$ is. All of this happens automatically, the user of the method does not even need to know the precise smoothness of the initial value.

After this introduction and a motivation in Section~\ref{sec:motivation}, we briefly review a functional calculus in Section~\ref{sec:prelim}. In Section~\ref{sec:approxKry} we prove that any function of the presented functional calculus times a vector can be approximated in the resolvent Krylov subspace spanned by the resolvent and this vector. For the proof of our main results, some smoothing operators are introduced in Section~\ref{sec:smoothing}. In Section~\ref{sec:approxBanach}, the approximation of the semigroup in Banach spaces is considered. Our main theorems can be found in Section~\ref{sec:detect}, where the effect of the smoothness of the vector on the convergence rate for the approximation of the semigroup and related functions in Hilbert spaces is studied. Some numerical illustrations of our results are given in Section~\ref{sec:NumExp}, followed by a conclusion. 

%{\bf todo: Wie in Vortrag: Beispiel wie in Vortrag, Stabilitaet standard Krylov ueber expl. %Euler, damm Glattheit, imp. Euler, Schranken impl. Euler.}
%{\bf todo: Old question, singly-implicit methods, Beweis:theoretisch jede Geschwindigktie %m"oglich!}
%{\bf todo: Frommer, algebraisches Multigrid, Beispiel?}
%{\bf todo: Full discretisation error for pseudospectral, finite differences, and finite %elements}
%{\bf todo: Emphesize open problem of approximation with only implicit Euler-type %approximations is solved. Is the open problem formulated somewhere? Really not solved?}

%%%%%%%%%%%%%%%%%%%%%%%%%%%%%%%%%%%%%%%%%%
\section{Motivation} \label{sec:motivation}
%%%%%%%%%%%%%%%%%%%%%%%%%%%%%%%%%%%%%%%%%%

For a first illustration of this nice feature of the resolvent Krylov subspace method just mentioned above, we consider the one-dimensional Schr\"odinger equation on $L^2(0,2\pi)$
\begin{equation} \label{eq_schroe}
u'(t) = i \tfrac{\partial^2}{\partial x^2} u(t)\,, \quad 
u(0) = u_0
\quad \text{for} \quad 
x \in (0,2\pi)\,,~ t \geq 0\,.
\end{equation}
With $A = \tfrac{\partial^2}{\partial x^2}$, we obtain the abstract equation $u'(t) = iAu(t)$, where the domain of $A$ is the Sobolev space $H^2_\pi(0,2\pi)$ containing all $2\pi$-periodic functions that admit a second order weak derivative. We now discretize \eqref{eq_schroe} by a pseudospectral method. Therefore, we approximate the unknown solution $u$ by a finite linear combination of the basis functions $\phi_k(x) = e^{ikx}$, that is $u(t) \approx \sum_{k=-N/2}^{N/2-1} \psi_k(t) \phi_k$ with $N$ even, and search for coefficients $\psi_k(t)$ such that
\[
\sum_{k=-N/2}^{N/2-1} \psi'_k(t) \phi_k
= \sum_{k=-N/2}^{N/2-1} (-ik^2) \psi_k(t) \phi_k\,.
\]
This ansatz is equivalent to
\begin{equation} \label{schroe_ode}
\Psi'(t) = i A_N \Psi(t)\,, \quad \Psi(0) = \Psi_0
\end{equation}
with solution $\Psi(\tau) = e^{i\tau A_N} \Psi_0$, where the vector $\Psi(t) \in \C^N$ contains the Fourier coefficients $\psi_k(t)$ for $k = -\frac{N}{2}, \ldots, \frac{N}{2}-1$ and the matrix $A_N \in \R^{N \times N}$ is a diagonal matrix with entries $\left(-\frac{N}{2}\right)^2, \left(-\tfrac{N}{2}+1\right)^2, \ldots, \left(\tfrac{N}{2}-1\right)^2$. The discretized initial vector $\Psi_0 = \big(\psi_k(0)\big)$ is given by 
\[
\psi_k(0) = \frac{1}{2\pi} \int_0^{2\pi} u_0(x) e^{-ikx} \,dx\,,
\quad k = -\tfrac{N}{2}, \ldots, \tfrac{N}{2}-1\,.
\]
These coefficients $\psi_k(0)$ can be approximated by a discrete Fourier transform of the discretized function $u_0$. Here, we use the initial data
\[
u^q_0(x) =
\left\{ \begin{array}{l@{\qquad}l}
\left( \tfrac{2}{\pi} \right)^{4q} (x-\pi)^{2q} x^{2q}\,,
& x \in (0, \pi]\,, \\[2ex]
\left( \tfrac{2}{\pi} \right)^{4q} (x-\pi)^{2q} (x-2\pi)^{2q}\,,
& x \in (\pi, 2\pi]\,.
\end{array} \right.
\]
Differentiating this function $2q+1$ times, $\frac{d^{2q+1}}{dx^{2q+1}} u_0^q$ becomes
discontinuous at $x = \pi$ and at $x = 2\pi$, if $u_0^q$ is considered as a $2\pi$-periodic function. So, we have $u_0^q \in \mathcal{D}(A^q)$ but $u_0^q \not\in \mathcal{D}(A^{q+1})$.
By $\Psi_0^q \in \C^N$, we denote the corresponding spectral discretizations of the initial value. The solution $e^{i\tau A_N} \Psi_0^q$ of the discretized initial value problem at time $\tau > 0$ is now approximated in the rational Krylov subspace $\mathcal{K}_n((\gamma-i\tau A_N)^{-1}, \Psi_0^q)$, where $\gamma = 1$.

In Figure~\ref{fig_moti},  the error of the rational Krylov subspace approximation is plotted against the dimension of the Krylov subspace (blue solid lines) for $N = 131072$, $\tau = 0.02$, and smoothness indices $q = 2, 4, 6, 8$. We can observe that $e^{i\tau A_N} \Psi_0^q$ is approximated the better the smoother the continuous initial value $u_0^q$ is, or more exactly, the higher the number $q$ with $u_0^q \in \mathcal{D}(A^q)$ is. Furthermore, we applied for comparison the implicit Euler method to the discretized problem and added the obtained error curves to Figure~\ref{fig_moti} (red dashed lines). 

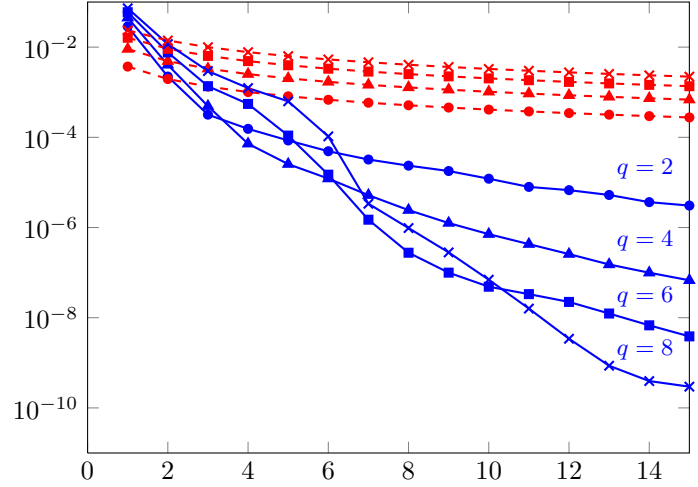
\begin{figure} \centering
% This file was created by matlab2tikz.
% Minimal pgfplots version: 1.3
%
%The latest updates can be retrieved from
%  http://www.mathworks.com/matlabcentral/fileexchange/22022-matlab2tikz
%where you can also make suggestions and rate matlab2tikz.
%
\begin{tikzpicture}

\begin{axis}[
width=8cm, height=6cm, scale only axis,
xmin=0, xmax=15, ymode=log, ymin=1e-11, ymax=1e-1,
ytick={1,1e-2,1e-4,1e-6,1e-8,1e-10},yminorticks=false
]
\node[blue] at (axis cs: 13.9,2e-9) {\small $q=8$};
\node[blue] at (axis cs: 13.9,3e-8) {\small $q=6$};
\node[blue] at (axis cs: 13.9,5e-7) {\small $q=4$};
\node[blue] at (axis cs: 13.9,2e-5) {\small $q=2$};
\addplot[mark=otimes*, mark size=1.5pt, color=blue, thick] 
  table[row sep=crcr]{%
1	0.0286806880062095\\
2	0.00222205407487618\\
3	0.000318198667704703\\
4	0.000154210266186596\\
5	8.52362436712081e-05\\
6	4.91652552897672e-05\\
7	3.21983799522681e-05\\
8	2.35318555685361e-05\\
9	1.79051516149416e-05\\
10	1.20869729606534e-05\\
11	7.95290583934606e-06\\
12	6.74404099214301e-06\\
13	5.26781080219631e-06\\
14	3.6547942959624e-06\\
15	3.06433920946873e-06\\
16	2.43983440937878e-06\\
17	1.91271493150894e-06\\
18	1.83828992418223e-06\\
19	1.55147548670563e-06\\
20	1.64067065023074e-06\\
};
\addplot[color=red, thick, dashed] 
  table[row sep=crcr]{%
1	0.00370625613728846\\
2	0.00196050101330424\\
3	0.00133269940129977\\
4	0.00100969656916717\\
5	0.000812868313446877\\
6	0.000680345871749976\\
7	0.00058502900111226\\
8	0.00051317142864641\\
9	0.000457057308377902\\
10	0.000412020907885705\\
11	0.000375074843965014\\
12	0.000344217587392678\\
13	0.000318057726047202\\
14	0.000295597915698593\\
15	0.000276104573967745\\
16	0.000259026078011352\\
17	0.000243939631392577\\
18	0.000230515723956171\\
19	0.000218493743496728\\
20	0.000207664859361655\\
};
\addplot[mark=otimes*, mark size=1.5pt, color=red, thick, only marks] 
  table[row sep=crcr]{%
1	0.00370625613728846\\
2	0.00196050101330424\\
3	0.00133269940129977\\
4	0.00100969656916717\\
5	0.000812868313446877\\
6	0.000680345871749976\\
7	0.00058502900111226\\
8	0.00051317142864641\\
9	0.000457057308377902\\
10	0.000412020907885705\\
11	0.000375074843965014\\
12	0.000344217587392678\\
13	0.000318057726047202\\
14	0.000295597915698593\\
15	0.000276104573967745\\
16	0.000259026078011352\\
17	0.000243939631392577\\
18	0.000230515723956171\\
19	0.000218493743496728\\
20	0.000207664859361655\\
};
\addplot[mark=triangle*, mark size=2pt, color=blue, thick] 
  table[row sep=crcr]{%
1	0.0448864886947235\\
2	0.00404675473805381\\
3	0.000490894492894326\\
4	7.21089996636432e-05\\
5	2.54649883856642e-05\\
6	1.21696053739089e-05\\
7	5.21424532738796e-06\\
8	2.43222579243809e-06\\
9	1.25785631084636e-06\\
10	7.12765516792953e-07\\
11	4.28361885598818e-07\\
12	2.57564676527893e-07\\
13	1.51493531140825e-07\\
14	9.9516590392499e-08\\
15	6.7563190290491e-08\\
16	4.4211026256153e-08\\
17	4.37097972200826e-08\\
18	2.92059380919711e-08\\
19	2.94029461864028e-08\\
20	2.12836576225443e-08\\
};
\addplot[color=red, thick, dashed] 
  table[row sep=crcr]{%
1	0.00907620546774708\\
2	0.00488708667321387\\
3	0.00332976773286676\\
4	0.00252218181929314\\
5	0.00202900642570404\\
6	0.00169683324602477\\
7	0.00145798116893754\\
8	0.00127800624227143\\
9	0.00113754491932398\\
10	0.00102488027903232\\
11	0.000932508667633654\\
12	0.000855402980125778\\
13	0.000790069231970225\\
14	0.000734003722371879\\
15	0.000685365518637938\\
16	0.000642770823402875\\
17	0.000605159410338751\\
18	0.000571705293395198\\
19	0.000541755436235846\\
20	0.000514786745692867\\
};
\addplot[mark=triangle*, mark size=2pt, color=red, thick, only marks] 
  table[row sep=crcr]{%
1	0.00907620546774708\\
2	0.00488708667321387\\
3	0.00332976773286676\\
4	0.00252218181929314\\
5	0.00202900642570404\\
6	0.00169683324602477\\
7	0.00145798116893754\\
8	0.00127800624227143\\
9	0.00113754491932398\\
10	0.00102488027903232\\
11	0.000932508667633654\\
12	0.000855402980125778\\
13	0.000790069231970225\\
14	0.000734003722371879\\
15	0.000685365518637938\\
16	0.000642770823402875\\
17	0.000605159410338751\\
18	0.000571705293395198\\
19	0.000541755436235846\\
20	0.000514786745692867\\
};
\addplot[mark=square*, mark size=1.5pt, color=blue, thick] 
  table[row sep=crcr]{%
1	0.059485195570683\\
2	0.00763847323930575\\
3	0.0013612767443652\\
4	0.00055045238099658\\
5	0.000109232909920436\\
6	1.48512768430848e-05\\
7	1.49126249509693e-06\\
8	2.76174755589866e-07\\
9	9.98577211533384e-08\\
10	4.87084990319199e-08\\
11	3.33427342539725e-08\\
12	2.23961489256355e-08\\
13	1.23860097287092e-08\\
14	6.78151267493523e-09\\
15	3.85699050838504e-09\\
16	3.75823445480776e-09\\
17	2.31088090443969e-09\\
18	2.32825781969419e-09\\
19	1.53720782979949e-09\\
20	1.56320542940698e-09\\
};
\addplot[color=red, thick, dashed] 
  table[row sep=crcr]{%
1	0.0161514271920211\\
2	0.00918567410082807\\
3	0.00639303283219622\\
4	0.00489550948110899\\
5	0.003963935842189\\
6	0.00332919992946939\\
7	0.00286920406890648\\
8	0.00252064430315456\\
9	0.00224746104300097\\
10	0.00202762045531642\\
11	0.00184690549555341\\
12	0.00169573467763548\\
13	0.00156741651284698\\
14	0.00145713700526666\\
15	0.0013613447169183\\
16	0.00127736254538186\\
17	0.00120313438597078\\
18	0.00113705501748101\\
19	0.00107785298257364\\
20	0.00102450815933803\\
};
\addplot[mark=square*, mark size=1.5pt, color=red, thick, only marks] 
  table[row sep=crcr]{%
1	0.0161514271920211\\
2	0.00918567410082807\\
3	0.00639303283219622\\
4	0.00489550948110899\\
5	0.003963935842189\\
6	0.00332919992946939\\
7	0.00286920406890648\\
8	0.00252064430315456\\
9	0.00224746104300097\\
10	0.00202762045531642\\
11	0.00184690549555341\\
12	0.00169573467763548\\
13	0.00156741651284698\\
14	0.00145713700526666\\
15	0.0013613447169183\\
16	0.00127736254538186\\
17	0.00120313438597078\\
18	0.00113705501748101\\
19	0.00107785298257364\\
20	0.00102450815933803\\
};
\addplot[mark=x, mark size=2.5pt, color=blue, thick] 
  table[row sep=crcr]{%
1	0.0726575103599419\\
2	0.011695361456652\\
3	0.00294843348840564\\
4	0.00123758187885074\\
5	0.000628853801835491\\
6	0.00010489900991902\\
7	3.37139394687439e-06\\
8	9.73573921945625e-07\\
9	2.80819377316798e-07\\
10	6.96251812525187e-08\\
11	1.59507687488257e-08\\
12	3.41642392750379e-09\\
13	8.61990313042084e-10\\
14	3.93858531674125e-10\\
15	2.96023034940419e-10\\
16	9.00356308254539e-11\\
17	8.82099552753565e-11\\
18	1.2709396271734e-10\\
19	3.62010119099493e-11\\
20	3.61375234238765e-11\\
};
\addplot[color=red, thick, dashed] 
  table[row sep=crcr]{%
1	0.0235314984994111\\
2	0.014031543087745\\
3	0.00998548903269797\\
4	0.00774338548753359\\
5	0.00632019135516042\\
6	0.00533731492753852\\
7	0.00461816238030739\\
8	0.00406933292580152\\
9	0.00363682317994016\\
10	0.0032872518092453\\
11	0.00299888159274845\\
12	0.0027569535035247\\
13	0.00255109664649522\\
14	0.00237381142340278\\
15	0.00221954084478892\\
16	0.00208408019579077\\
17	0.00196418953104204\\
18	0.00185733213278738\\
19	0.00176149363633629\\
20	0.00167505422632234\\
};
\addplot[mark=x, mark size=2.5pt, color=red, thick, only marks] 
  table[row sep=crcr]{%
1	0.0235314984994111\\
2	0.014031543087745\\
3	0.00998548903269797\\
4	0.00774338548753359\\
5	0.00632019135516042\\
6	0.00533731492753852\\
7	0.00461816238030739\\
8	0.00406933292580152\\
9	0.00363682317994016\\
10	0.0032872518092453\\
11	0.00299888159274845\\
12	0.0027569535035247\\
13	0.00255109664649522\\
14	0.00237381142340278\\
15	0.00221954084478892\\
16	0.00208408019579077\\
17	0.00196418953104204\\
18	0.00185733213278738\\
19	0.00176149363633629\\
20	0.00167505422632234\\
};
\end{axis}
\end{tikzpicture}%
\caption{Plot of the error versus dimension of the Krylov subspace $\mathcal{K}_n((1-i\tau A_N)^{-1}, \Psi_0^q)$ (blue solid lines), and of the implicit Euler method (red dashed lines) for $N = 131072$, $\tau=0.02$ and initial vectors $\Psi_0^q$ resulting from the discretization of $u_0^q \in \mathcal{D}(A^q)$ for $q = 2, 4, 6, 8$ (circle-, triangle-, square-, cross-marked line).}
\label{fig_moti}
\end{figure}

In order to introduce the resolvent Krylov subspace and to get a first idea why this subspace might be a good choice for the approximation of the operator/matrix exponential, we consider  
the implicit Euler scheme, which is, besides the explicit Euler scheme, a standard method to approximate the matrix exponential times a vector. The two methods are based on the relations
\[
\begin{array}{l@{\qquad}l@{\hspace*{0.1cm}}l}
\text{explicit Euler:} & \displaystyle
\lim_{n \to \infty} \left( I + \tfrac{\tau}{n}A \right)^nv &= e^{\tau A}v\,, \\[1ex] 
\text{implicit Euler:} & \displaystyle 
\lim_{n \to \infty} \left( I - \tfrac{\tau}{n}A \right)^{-n}v &= e^{\tau A}v\,.
\end{array}
\]
For matrices with a large norm, the explicit Euler method does not work efficiently. The discretized Schr\"odinger equation is a stiff system of ordinary differential equations. If we increase the number $N$ of basis functions, the norm of the discretization matrix $A_N$ grows. For the discretized problem considered here, the explicit Euler method is therefore not suitable. The situation is even worse for the continuous equation, since the explicit Euler scheme cannot be used unless the initial data is very smooth and lies in $\mathcal{D}(A^\infty) = \cap_{n=1}^\infty \mathcal{D}(A^n)$. The implicit Euler method, however, provides an approximation to the semigroup for all initial vectors $v$ in the associated Banach space~$X$. The resolvent $(I-\frac{\tau}{n}A)^{-1}$ maps $X$ to $\mathcal{D}(A)$ and can thus be seen as a smoothing operator. While the explicit Euler method cannot be applied for initial values $v \not\in \mathcal{D}(A^\infty)$, the implicit Euler method can be proven to possess the convergence rates (see \cite{brenner_thomee79})
\[
\left\| e^{\tau A}v - \left( 1-\tfrac{\tau}{n}A \right)^{-n} v \right\|
\leq
\left\{ \begin{array}{l@{\qquad}l}
C\, \tfrac{\tau}{\sqrt{n}}\, \|A v\|\,, 
& v \in \mathcal{D}(A)\,, \\[2ex]
C\, \tfrac{\tau^2}{n}\, \|A^2 v\|\,, 
& v \in \mathcal{D}(A^2)\,.
\end{array} \right. 
\]
For even smoother data, the implicit Euler method does not converge faster. 
In order to improve both methods, we briefly review the basic idea of Krylov subspace methods. Consider 
$n-1$ steps of the explicit Euler method that can be seen as a product of a polynomial in $A$ and $v$, that is
\[
e^{A}v \approx \bigl( 1+\tfrac{\tau}{n-1}A \bigr)^{n-1}v = a_0 v + a_1 A v + \cdots + a_{n-1} A^{n-1} v = p(A)v\,, 
\quad p \in \mathcal{P}_{n-1}\,,
\]
where $\mathcal{P}_{n-1}$ is the space of polynomials of maximum degree $n-1$. Instead of using a fixed polynomial approximation given by the explicit Euler scheme, it might be better to search a best approximation in the polynomial Krylov subspace
\[
\mathcal{K}_n(A,v)=\mbox{span}\{v, A v, A^2 v, \ldots, A^{n-1} v\}\,.
\]
Using this approximation space for the numerical solution of the discretized Schr\"odinger equation, it turns out that the approximation improves with respect to stability, but a substantial error reduction would just begin after nearly $\|\tau A_N\|$ iteration steps (see \cite{Kry3}), where $\|\tau A_N\|$ becomes large for fine space discretizations. Analogous to the explicit Euler method, the standard polynomial Krylov subspace method is not suitable for a grid-independent approximation of the Schr\"odinger equation.

Instead of applying the implicit Euler method, one can try to find a better approximation of the type
\[
e^A v \approx a_0 v + a_1 (\gamma-A)^{-1} v + \cdots + a_{n-1} (\gamma-A)^{-(n-1)} v = r(A) v\,,
\quad r \in \frac{\mathcal{P}_{n-1}}{(\gamma-\cdot)^{n-1}}\,,
\]
that means, to search a best approximation in the rational Krylov subspace 
\[
  \mathcal{K}_n((\gamma-A)^{-1},v) = \mathrm{span} \{v, (\gamma-A)^{-1}v, (\gamma-A)^{-2}v,\ldots, (\gamma-A)^{-n+1}v \}\,, \quad \gamma > 0\,.
\]
This so-called resolvent Krylov subspace has been proposed by Ruhe in \cite{Ruh84} for eigenvalue computations and is by now a standard technique for this purpose (cf. \cite{tempeigval00}). We will use the resolvent Krylov subspace as approximation space for the approximation of $e^Av$ and related operator functions in the following. Analogous to the implicit Euler method, the approximation based on the resolvent Krylov subspace will be grid-independent but improve on the implicit Euler method with respect to the convergence rate dependent on the smoothness of the vector $v$, as illustrated above (cf. Figure~\ref{fig_moti}).

%%%%%%%%%%%%%%%%%%%%%%%%%%%%%%%%%%%%%%%%%
\section{Preliminaries} \label{sec:prelim}
%%%%%%%%%%%%%%%%%%%%%%%%%%%%%%%%%%%%%%%%%

We briefly review a functional calculus that has been formerly used in \cite{GG13} and \cite{grimm_gugat10}. The Lebesgue space of complex-valued integrable functions defined on $\R$ is denoted by $L^1(\R)$ with norm $\| \cdot \|_1$. By $C(\R)$, we designate the space of continuous functions $f \,:\, \R \to \C$. Moreover, let
\begin{equation} \label{Mplus}
\mathcal{M}_+ = 
\left\{
f \in C(\R) ~ | ~ \mathcal{F} f \in L^1(\R) ~ \mbox{and} ~ 
\supp (\mathcal{F}f) \subseteq [0,\infty)
\right\},
\end{equation}
where $\mathcal{F}f$ is the Fourier transform of $f$ given as
\[
\mathcal{F} f(s) = \frac{1}{2\pi} \int_{-\infty}^\infty e^{-ixs} f(x) \, dx \quad \mbox{for} \quad f \in L^1(\R)\,.
\]
For $f \not \in L^1(\R)$, the Fourier transform is understood in the sense of distributions.
For each function holomorphic in the left half-plane, we denote by $f_{(0)}: \R \rightarrow \C$ the restriction of $f$ to $\mbox{Re}\, z = 0$ so that $f_{(0)}(\xi)=f(i\xi)$, $\xi \in \R$, and we define the algebra
\[
\widetilde{\mathcal{M}} := 
\left\{
f ~ \mbox{holomorphic and bounded for} ~ \mbox{Re} \, z \leq 0 ~ | ~ f_{(0)} 
\in \mathcal{M}_+ \right\}.
\]
Let $A$ generate a bounded strongly continuous semigroup with $\|e^{\tau A}\| \leq N$ on some Banach space $X$. For functions $f \in \widetilde{\mathcal{M}}$, we introduce a functional calculus via
\begin{equation} \label{newcalcdef}
f(A)= \int_0^\infty e^{sA} \, \mathcal{F}f_{(0)}(s) \, ds\,.
\end{equation}
This defines a bounded linear operator $f(A)$ satisfying
$\|f(A)\| \leq N\|\mathcal{F}f_{(0)}\|_1$.
Until we know that the functional calculus is consistent with standard operator functions such as the resolvent and the semigroup, 
%For a short time and beginning from here, 
we write $\left(f(z) \right)(A)$, when the definition of the operator functions is according to the new calculus \eqref{newcalcdef}, instead of simply $f(A)$. For $f(z)=(z_0-z)^{-k}$ with $\mbox{Re}\, z_0 > 0$ and $k \geq 1$, we have by elementary semigroup theory (cf. Corollary~1.11, pp. 56--57 in \cite{engelnagel00}),
\[
\left( \frac{1}{(z_0-z)^k} \right)(A)  = 
\int_0^\infty e^{sA} e^{-sz_0}\cdot \frac{s^{k-1}}{(k-1)!} \, ds=(z_0-A)^{-k}\,,
\]
that is, the definition via \eqref{newcalcdef} coincides with the definition in terms of the resolvent. Analogously, all rational functions with a smaller degree of the numerator than the denominator and poles in the right complex half-plane are included by our functional calculus so far. 

We will need another extension in order to include the semigroup, i.e., we want that the generator $A$ inserted in the exponential function $e^{tz}$, $t \geq 0$, coincides with the semigroup. Let
\[
\mathcal{M}_0 := \{ f ~ \mbox{holomorphic for} ~ \mbox{Re} \, z \leq 0 ~ | ~ 
\exists \, n \in \N_0: \frac{f(z)}{(1-z)^n} \in \widetilde{M} \}\,.
\]  
For $f \in \mathcal{M}_0$, we set
\begin{equation*} %\label{opextdef}
f(A):= (1-A)^n \left( \frac{f(z)}{(1-z)^n} \right)(A)\,,
\end{equation*}
where $n$ is such that $\frac{f(z)}{(1-z)^n} \in \widetilde{M}$. Note that the definition does not depend on the choice of $n$ and that the definition results in a closed operator on $X$. Finally, we define the set
\[
\widetilde{\mathcal{M}} \subseteq
\mathcal{M} := 
  \{ f \in \mathcal{M}_0 ~ | ~ f(A): X \rightarrow X ~ \mbox{is bounded} \}
\]
which is sufficient for our purposes. The following lemma can be found as Proposition 1.12 in \cite{Haasethesis}.
\begin{lemma} \label{homomorphy2}
The mapping $f \rightarrow f(A)$ via \eqref{newcalcdef} is a homomorphism of $\mathcal{M}$ into the algebra of bounded linear operators on $X$.
\end{lemma}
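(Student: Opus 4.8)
The statement to be proved is Lemma~\ref{homomorphy2}: the map $f \mapsto f(A)$ defined by the functional calculus \eqref{newcalcdef} (extended to $\mathcal{M}_0$ and restricted to $\mathcal{M}$) is an algebra homomorphism into $B(X)$. I would organize the proof in three stages, corresponding to the three layers of the definition.

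First, I would establish the homomorphism property on the core class $\widetilde{\mathcal{M}}$, where $f(A) = \int_0^\infty e^{sA}\,\mathcal{F}f_{(0)}(s)\,ds$. Linearity is immediate from linearity of the integral and of the Fourier transform. For multiplicativity, take $f,g \in \widetilde{\mathcal{M}}$; then $(fg)_{(0)} = f_{(0)}g_{(0)}$, and since convolution is carried to products by $\mathcal{F}$ on $L^1(\R)$, we have $\mathcal{F}(fg)_{(0)} = \mathcal{F}f_{(0)} * \mathcal{F}g_{(0)}$, a function again supported in $[0,\infty)$ because the supports of the two factors are. The plan is then to compute
\[
f(A)g(A) = \int_0^\infty\!\!\int_0^\infty e^{sA}e^{tA}\,\mathcal{F}f_{(0)}(s)\,\mathcal{F}g_{(0)}(t)\,ds\,dt,
\]
use the semigroup law $e^{sA}e^{tA} = e^{(s+t)A}$, substitute $r = s+t$, and recognize the inner integral as the convolution $(\mathcal{F}f_{(0)} * \mathcal{F}g_{(0)})(r)$, which equals $\mathcal{F}(fg)_{(0)}(r)$. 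Fubini is justified by the bound $\|e^{\tau A}\| \le N$ together with $\mathcal{F}f_{(0)}, \mathcal{F}g_{(0)} \in L^1$. One should also check that $\widetilde{\mathcal{M}}$ is closed under products (holomorphy and boundedness on the left half-plane are clearly preserved; membership of $(fg)_{(0)}$ in $\mathcal{M}_+$ follows from the convolution argument just given), so the claim is internally consistent.

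Second, I would lift the homomorphism property to $\mathcal{M}_0$ via the definition $f(A) = (1-A)^n\bigl(\tfrac{f(z)}{(1-z)^n}\bigr)(A)$. The key observation is that within $\widetilde{\mathcal{M}}$ the function $(1-z)^{-1}$ is mapped to $(1-A)^{-1}$ (this is the special case $z_0 = 1$, $k=1$ of the resolvent identity already verified in the excerpt), so by the first stage, multiplication by $(1-z)^{-n}$ inside $\widetilde{\mathcal{M}}$ corresponds to composition with $(1-A)^{-n}$. This gives well-definedness (independence of $n$) by a routine cancellation, and for $f,g \in \mathcal{M}_0$ with exponents $n,m$ one writes $fg/(1-z)^{n+m} = \bigl(f/(1-z)^n\bigr)\bigl(g/(1-z)^m\bigr) \in \widetilde{\mathcal{M}}$, applies Stage~1, and then pushes the $(1-A)^{n+m}$ back out. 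The care needed here is with domains and the closedness of the operators $f(A)$: the identity $(1-A)^n(1-A)^{-n} = I$ on all of $X$ and the fact that $(1-A)^{-1}$ commutes with every bounded $g(A)$ (from Stage~1) make the manipulations legitimate, but one must track that intermediate vectors lie in $\mathcal{D}(A^k)$ for the appropriate $k$. Finally, restricting to $\mathcal{M}$ simply selects those $f$ for which $f(A)$ is bounded, and the homomorphism property is inherited.

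The main obstacle I anticipate is not the Fubini/convolution computation in Stage~1 — that is essentially bookkeeping — but rather the bookkeeping of unbounded, closed operators in Stage~2: verifying that the product formula survives the interplay between the closed operators $(1-A)^n$ and the bounded operators coming from $\widetilde{\mathcal{M}}$, and that $f(A)$ is genuinely well-defined independently of the regularization exponent. Since the excerpt attributes the statement to Proposition~1.12 in \cite{Haasethesis}, I would expect the cleanest route to cite that source for the delicate domain issues and present Stage~1 in detail as the conceptual heart of the argument.
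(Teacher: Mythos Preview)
Your proof plan is correct and follows the standard route (the Hille--Phillips calculus on $\widetilde{\mathcal{M}}$ via convolution and the semigroup law, then extension by resolvent regularization), but note that the paper itself offers \emph{no} proof of this lemma: it simply records the statement and defers entirely to Proposition~1.12 in \cite{Haasethesis}. So there is nothing to compare against beyond the citation you already identified; your Stages~1--2 are essentially what one finds in Haase's construction, and your caveat about domain bookkeeping in Stage~2 is exactly the point where citing that source is appropriate.
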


We can check, that the semigroup is now included in the extended functional calculus.
\begin{lemma}
For $\tau \geq 0$, we have
\[
  \left( e^{\tau z} \right)(A) = e^{\tau A}\,.
\]
\end{lemma}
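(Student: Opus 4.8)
The plan is to reduce the identity to the consistency of the calculus on resolvents already recorded in Section~\ref{sec:prelim}, combined with the semigroup law. Fix $\tau\ge 0$ throughout.

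\emph{Membership in $\mathcal{M}_0$.} First I would verify that $e^{\tau z}\in\mathcal{M}_0$. The map $z\mapsto e^{\tau z}$ is entire and $|e^{\tau z}|=e^{\tau\,\mathrm{Re}\,z}\le 1$ for $\mathrm{Re}\,z\le 0$, so it is holomorphic and bounded on the closed left half-plane. Picking $n=2$ (any $n\ge 1$ works) and setting $g(z):=e^{\tau z}/(1-z)^{n}$, the function $g$ is again holomorphic and bounded there, and $g_{(0)}(\xi)=e^{i\tau\xi}(1-i\xi)^{-n}$ is continuous. From the resolvent computation in Section~\ref{sec:prelim} we know $\mathcal{F}\!\left[(1-i\xi)^{-n}\right](s)=\frac{s^{n-1}}{(n-1)!}e^{-s}\mathbf{1}_{[0,\infty)}(s)$, and the modulation rule $\mathcal{F}\!\left[e^{i\tau\,\cdot}\,h\right](s)=\mathcal{F}h(s-\tau)$ gives
\[
\mathcal{F}g_{(0)}(s)=\frac{(s-\tau)^{n-1}}{(n-1)!}\,e^{-(s-\tau)}\,\mathbf{1}_{[\tau,\infty)}(s)\,,
\]
which lies in $L^{1}(\R)$ and is supported in $[\tau,\infty)\subseteq[0,\infty)$. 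Hence $g\in\widetilde{\mathcal{M}}$, so $e^{\tau z}\in\mathcal{M}_0$ with this $n$.

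\emph{Evaluation.} Next I would compute $g(A)$ from \eqref{newcalcdef}. Inserting the formula above and substituting $s=\tau+r$,
\[
g(A)=\int_0^\infty e^{sA}\,\mathcal{F}g_{(0)}(s)\,ds
=\int_0^\infty e^{(\tau+r)A}\,\frac{r^{n-1}}{(n-1)!}\,e^{-r}\,dr
= e^{\tau A}\int_0^\infty e^{rA}\,e^{-r}\,\frac{r^{n-1}}{(n-1)!}\,dr
= e^{\tau A}(1-A)^{-n}\,,
\]
using the semigroup law $e^{(\tau+r)A}=e^{\tau A}e^{rA}$, the boundedness of $e^{\tau A}$ to pull it out of the (absolutely convergent) Bochner integral, and the elementary identity of Section~\ref{sec:prelim} for the last integral. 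By the definition of the extended calculus,
\[
\left(e^{\tau z}\right)(A)=(1-A)^{n}\,g(A)=(1-A)^{n}\,e^{\tau A}\,(1-A)^{-n}\,.
\]

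\emph{Conclusion and main obstacle.} It remains to see that the right-hand side equals $e^{\tau A}$. Since $(1-A)^{-n}$ maps $X$ onto $\mathcal{D}\!\left((1-A)^{n}\right)=\mathcal{D}(A^{n})$, the semigroup leaves $\mathcal{D}(A^{n})$ invariant, and $e^{\tau A}$ commutes with $A$, hence with $(1-A)^{n}$, on that subspace (cf.\ \cite{engelnagel00}); therefore $(1-A)^{n}e^{\tau A}(1-A)^{-n}v=e^{\tau A}(1-A)^{n}(1-A)^{-n}v=e^{\tau A}v$ for all $v\in X$, and in particular $e^{\tau z}\in\mathcal{M}$. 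The modulation computation of $\mathcal{F}g_{(0)}$ and the Bochner-integral manipulations are routine; the one point to handle carefully is this final commutation, namely that the closed operator $(1-A)^{n}$ may be moved past the bounded operator $e^{\tau A}$, which is legitimate precisely because the argument lies in the range of $(1-A)^{-n}$.
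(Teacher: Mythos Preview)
Your proof is correct and follows essentially the same route as the paper's: divide $e^{\tau z}$ by $(1-z)^{n}$, compute the shifted Fourier transform, evaluate the integral as $e^{\tau A}(1-A)^{-n}$ via the semigroup law and the resolvent identity, and then undo the division using commutation. The paper simply takes $n=1$ and is slightly terser about the final commutation step, whereas you work with general $n$ and spell out why $(1-A)^{n}e^{\tau A}(1-A)^{-n}=e^{\tau A}$.
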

\begin{proof}
For $n=1$, one can verify that $\frac{e^{\tau z}}{1-z} \in \widetilde{\mathcal{M}}$. Hence, we have by \eqref{newcalcdef} that
\begin{align*}
  \left( \frac{e^{z\tau}}{1-z} \right)(A) &= \int_0^\infty e^{sA}{\bf 1}_{[\tau,\infty)}(s) e^{\tau-s}\, ds    = \int_{\tau}^\infty e^{sA}e^{\tau-s}\, ds \\
  &= \int_0^\infty e^{(s+\tau)A}e^{-s}\,ds = e^{\tau A} \int_0^\infty e^{sA}e^{-s}\,ds 
  = e^{\tau A}(1-A)^{-1}\,.
\end{align*}
Finally, we conclude
\[
  (1-A) \left( \frac{e^{\tau z}}{1-z} \right)(A)=(1-A)e^{\tau A}(1-A)^{-1}=e^{\tau A}
\]
which proves the assertion.
\end{proof}

For all functions relevant to our discussion, the functional calculus  \eqref{newcalcdef} coincides with the definitions in semigroup theory. From now on, we therefore do not use different notations and simply write $f(A)$ for a function $f$ of an operator $A$ with respect to \eqref{newcalcdef} . We will also need the following lemma of Brenner and Thom\'{e}e (cf. Lemma~4 in \cite{brenner_thomee79}), whose proof extends to our case. 
\begin{lemma} \label{btlemma4}
For $f,g \in \mathcal{M}$ with $f(z)=z^lg(z)$ for some $l>0$ and $\mbox{Re}\,z \leq 0$, we have
\[
f(A)v=g(A)A^lv \quad \text{for} \quad v\in\mathcal{D}(A^l)\,.
\]
\end{lemma}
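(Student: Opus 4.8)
The obstruction to simply writing $f(A)=(z^l)(A)\,g(A)$ and invoking the homomorphism property of Lemma~\ref{homomorphy2} is that $z^l\notin\mathcal{M}$ (it is not bounded). The plan is therefore to divide by $(1-z)^l$: this turns the polynomial factor $z^l$ into the bounded rational function $\bigl(z/(1-z)\bigr)^l$, so that everything stays inside $\mathcal{M}$, and then to recover the claimed identity by applying the resulting bounded-operator identity to vectors of the form $(1-A)^l v$ with $v\in\mathcal{D}(A^l)$. Throughout I take $l$ to be a positive integer, so that $A^l$ and $\mathcal{D}(A^l)=\mathcal{D}\bigl((1-A)^l\bigr)$ carry their usual meaning; the argument parallels that of Lemma~4 in \cite{brenner_thomee79}.

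First I would record two auxiliary facts. (i) Since $\tfrac{z}{1-z}=\tfrac{1}{1-z}-1$, and since $\tfrac1{1-z}\in\widetilde{\mathcal{M}}\subseteq\mathcal{M}$ with $\bigl(\tfrac1{1-z}\bigr)(A)=(1-A)^{-1}$ while the constant $1$ belongs to $\mathcal{M}$ with $1(A)=I$ (take $n=1$ in the definition of $\mathcal{M}_0$), Lemma~\ref{homomorphy2} gives $\tfrac{z}{1-z}\in\mathcal{M}$ and that
\[
B:=\Bigl(\tfrac{z}{1-z}\Bigr)(A)=(1-A)^{-1}-I
\]
is a bounded operator. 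A short computation shows $B(1-A)=(1-A)B=A$ on $\mathcal{D}(A)$, and hence, by an easy induction on $l$, $B^l(1-A)^l v=A^l v$ for every $v\in\mathcal{D}(A^l)$. (ii) Since $\mathcal{M}$ is an algebra on which $f\mapsto f(A)$ is multiplicative, $\bigl(\tfrac{z}{1-z}\bigr)^l g\in\mathcal{M}$ with operator $B^l g(A)$, and likewise $f\cdot(1-z)^{-l}\in\mathcal{M}$ with operator $f(A)(1-A)^{-l}$ (using $\bigl((1-z)^{-l}\bigr)(A)=(1-A)^{-l}$).

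Now I would combine these. From $f(z)=z^l g(z)$ one has the pointwise identity $\tfrac{f(z)}{(1-z)^l}=\bigl(\tfrac{z}{1-z}\bigr)^l g(z)$, so the two descriptions of its operator in (ii) must agree:
\[
f(A)(1-A)^{-l}=B^l g(A).
\]
Finally, fix $v\in\mathcal{D}(A^l)=\mathcal{D}\bigl((1-A)^l\bigr)$ and apply this identity to the vector $(1-A)^l v\in X$:
\[
f(A)v=f(A)(1-A)^{-l}(1-A)^l v=B^l g(A)(1-A)^l v=g(A)\,B^l(1-A)^l v=g(A)A^l v,
\]
where in the third step I used that $B^l$ commutes with $g(A)$ (the image of the homomorphism is commutative) and in the last step fact~(i).

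The routine parts are the partial-fraction identity and the induction establishing $B^l(1-A)^l v=A^l v$. The only place demanding care — the \emph{main obstacle} — is the bookkeeping that keeps every function ($\tfrac{z}{1-z}$, $(1-z)^{-l}$, their powers, and the products with $f$ and $g$) genuinely inside $\mathcal{M}$, together with the domain and commutation issues in the last display: that $(1-A)^l v$ really lies in $X$ with $(1-A)^{-l}(1-A)^l v=v$, and that $g(A)$ may be pulled through $B^l$. None of this is deep, but it is exactly where a careless argument would slip.
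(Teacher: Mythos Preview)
Your argument is correct and is precisely the extension of Brenner--Thom\'{e}e's Lemma~4 that the paper invokes without spelling out: regularize by $(1-z)^{-l}$, work inside $\mathcal{M}$ via the homomorphism of Lemma~\ref{homomorphy2}, and undo the regularization on $\mathcal{D}(A^l)$. The only remark worth adding is that the paper does not give its own proof at all---it just cites \cite{brenner_thomee79} and asserts that the argument carries over---so your write-up is exactly the kind of verification the reader is implicitly asked to supply.
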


%%%%%%%%%%%%%%%%%%%%%%%%%%%%%%%%%%%%%%%%%%%%%%%%%%%%%%%%%%%%%%%%%%%%%%%%%%%%%
\section{Approximation in the resolvent Krylov subspace} \label{sec:approxKry}
%%%%%%%%%%%%%%%%%%%%%%%%%%%%%%%%%%%%%%%%%%%%%%%%%%%%%%%%%%%%%%%%%%%%%%%%%%%%%

Here and in the following, we always consider bounded semigroups with generator $A$ on some Banach space $X$ which satisfy $\|e^{tA}\| \leq N$. For bounded semigroups, it is well-known that the right complex half-plane belongs to the resolvent set of the generator $A$ (e.g. Theorem 1.10 on page 55 in \cite{engelnagel00}) which guarantees that the resolvent $(\gamma - A)^{-1}$ exists for all $\gamma > 0$. 

We are interested in the approximation of operator functions, especially the semigroup, times a vector $v \in X$ in the resolvent Krylov space
\begin{equation} \label{reskrysp}
   \mathcal{K}_n((\gamma-A)^{-1},v) := \mathrm{span} \{v, (\gamma-A)^{-1}v, (\gamma-A)^{-2}v,\ldots, (\gamma-A)^{-n+1}v \}\,, \quad \gamma > 0\,.
\end{equation}
For $n = 1, 2, 3, \ldots$, these spaces form a nested sequence of subspaces. If there exists an index $n_0$ for which $\mathcal{K}_{n_0}((\gamma-A)^{-1},v)$ is invariant under $(\gamma-A)^{-1}$, we have $\mathcal{K}_{n_0}((\gamma-A)^{-1},v) = \mathcal{K}_k((\gamma-A)^{-1},v)$ for all $k \geq n_0$.
%These spaces are obviously nested
%\[
%  \mathcal{K}_m((\gamma-A)^{-1},v) \subseteq \mathcal{K}_{m+1}((\gamma-A)^{-1},v), \qquad \mbox{for} \quad m=1,2,3,\ldots     
%\] 
%One can easily check, that if equality holds true for an index $m_0$, then all spaces with a larger index coincide with the space with index $m_0$, that is, we have
%\[
% \mathcal{K}_1((\gamma-A)^{-1},v) \subset \cdots \subset \mathcal{K}_{m_0-1}((\gamma-A)^{-1},v) \subset \mathcal{K}_{m_0}((\gamma-A)^{-1},v) = \mathcal{K}_{m_0+1}((\gamma-A)^{-1},v)=\cdots
%\] 
For a Banach space $X$ of finite dimension, this always happens. At the latest, when $n$ reaches the dimension of $X$. For a Banach space of infinite dimension, this might happen or it might not. In most cases, the spaces build an infinite series of nested spaces that are different. We therefore first discuss the natural question, whether all functions of our functional calculus can be approximated to an arbitrary precision in the space \eqref{reskrysp} when $n$ tends to infinity. For this purpose, we define the maximal resolvent Krylov subspace. 
\begin{definition} The maximal resolvent Krylov subspace for a given vector $v \in X$ and a fixed $\gamma > 0$ is given as the space
\begin{equation} \label{maxresKry}
  \mathcal{K}_\infty((\gamma-A)^{-1},v) := \mathrm{span} \{v, (\gamma-A)^{-1}v, (\gamma-A)^{-2}v,\ldots \}\,.
\end{equation}
We also need the closure of this space that we designate by 
$\overline{\mathcal{K}_\infty((\gamma-A)^{-1},v)} \subseteq X$.
\end{definition}

The following theorem states that all functions that are defined for $A$ via \eqref{newcalcdef} times $v$ are in the closure of the maximal resolvent Krylov subspace \eqref{maxresKry}, that is, $f(A)v$ can be approximated in the Krylov subspace \eqref{maxresKry} to any desired precision. Since the span designates all finite linear combinations, this also means that all functions in our functional calculus can be approximated in the space \eqref{reskrysp} to any arbitrary precision, if we let $n$ go to infinity.
\begin{theorem} \label{fAinclK}
For all $v \in X$ and all functions $f \in \mathcal{M}$, we have
\[
   f(A)v \in \overline{\mathcal{K}_\infty((\gamma-A)^{-1},v)}\,.
\]
\end{theorem}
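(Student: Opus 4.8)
The plan is to reduce the statement to a density fact about rational functions and then invoke the functional calculus \eqref{newcalcdef} together with its homomorphism property (Lemma~\ref{homomorphy2}). First I would observe that the maximal resolvent Krylov subspace \eqref{maxresKry} contains $r(A)v$ for every rational function $r$ whose only pole is at $z=\gamma$ and which vanishes at infinity (together with $v$ itself for the constant term): indeed $(\gamma-A)^{-k}v$ spans exactly these, and partial fractions shows that any $r$ of the form $r(z)=c_0+\sum_{k=1}^{m}c_k(\gamma-z)^{-k}$ gives $r(A)v\in\mathcal{K}_\infty((\gamma-A)^{-1},v)$. So it suffices to approximate $f(A)v$ by $r_j(A)v$ for such rational functions $r_j$, and for this it is enough to show $\|f(A)v-r_j(A)v\|\to 0$, which by the norm bound $\|g(A)\|\le N\|\mathcal{F}g_{(0)}\|_1$ from \eqref{newcalcdef} follows once $\|\mathcal{F}(f-r_j)_{(0)}\|_1\to 0$, provided $f-r_j\in\mathcal{M}$ (or at least $\widetilde{\mathcal{M}}$).

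The key analytic step is therefore: given $f\in\mathcal{M}$, construct rational functions $r_j$ with a single pole at $\gamma>0$, vanishing at infinity, such that $f-r_j\to 0$ in the norm $\|\mathcal{F}(\cdot)_{(0)}\|_1$. I would first handle the case $f\in\widetilde{\mathcal{M}}$, so that $g:=\mathcal{F}f_{(0)}\in L^1(\R)$ with $\supp g\subseteq[0,\infty)$, and $f(A)=\int_0^\infty e^{sA}g(s)\,ds$. The idea is to approximate $g$ in $L^1(0,\infty)$ by a function $g_j$ which is a finite linear combination of terms $s^{k-1}e^{-\gamma s}/(k-1)!$ — equivalently, a Laguerre-type expansion tuned to the node $\gamma$ — since each such term corresponds under \eqref{newcalcdef} precisely to $(\gamma-A)^{-k}$. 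Density of the span of $\{s^{k-1}e^{-\gamma s}\}_{k\ge 1}$ in $L^1(0,\infty)$ is classical (it follows, e.g., from the completeness of Laguerre functions in $L^2$ together with an approximation/truncation argument, or directly from a Müntz–Szász / Stone–Weierstrass type statement after the substitution $s\mapsto -\log t$). Then $r_j(z):=\int_0^\infty e^{sz}g_j(s)\,ds$ is rational of the desired form, $(r_j)_{(0)}\in\mathcal{M}_+$, and $\|\mathcal{F}(f-r_j)_{(0)}\|_1=\|g-g_j\|_{L^1(0,\infty)}\to 0$, so $r_j(A)v\to f(A)v$ by the norm bound, giving $f(A)v\in\overline{\mathcal{K}_\infty((\gamma-A)^{-1},v)}$.

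To pass from $\widetilde{\mathcal{M}}$ to all of $\mathcal{M}$, I would use the definition $f(A)=(1-A)^n h(A)$ where $h(z)=f(z)/(1-z)^n\in\widetilde{\mathcal{M}}$, together with the homomorphism property. Write $(1-A)=\gamma-A-(\gamma-1)$, so $(1-A)^n$ is a polynomial in $(\gamma-A)$; multiplying the rational approximants $r_j$ of $h$ (pole only at $\gamma$) by $(1-z)^n$ yields rational functions $\widetilde{r}_j(z)=(1-z)^n r_j(z)$ which are again in the span generated by $(\gamma-z)^{-k}$ and constants — here one must be slightly careful that multiplication by the polynomial $(1-z)^n$ can raise the degree of the numerator, but since $r_j$ vanishes fast enough at infinity one chooses, from the outset, $g_j$ so that the first $n$ "moments" match, i.e. $r_j=\mathcal{O}((\gamma-z)^{-(n+1)})$ at infinity, which is still a dense subclass of $L^1(0,\infty)$. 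Then $\widetilde r_j$ has a single pole at $\gamma$, vanishes at infinity, and $\widetilde r_j(A)v\in\mathcal{K}_\infty((\gamma-A)^{-1},v)$; moreover $\widetilde r_j(A)v=(1-A)^n r_j(A)v\to (1-A)^n h(A)v=f(A)v$ because $(1-A)^n$ is a fixed bounded-below-degree polynomial in $(\gamma-A)$ and $r_j(A)v\to h(A)v$ in the graph norm — or, more cleanly, because $f-\widetilde r_j=(1-z)^n(h-r_j)\in\widetilde{\mathcal{M}}$ with $\|\mathcal{F}(f-\widetilde r_j)_{(0)}\|_1\to 0$ by the same $L^1$-density argument applied directly to $f$.

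The main obstacle I anticipate is the density statement: proving that finite linear combinations of $\{s^{k-1}e^{-\gamma s}\}_{k\ge 1}$ are dense in $L^1(0,\infty)$, with the refinement that one can additionally prescribe the leading-order decay at infinity (the "$n$ vanishing moments" condition). The cleanest route is probably the substitution $t=e^{-s}$, $s\in(0,\infty)\leftrightarrow t\in(0,1)$, under which $s^{k-1}e^{-\gamma s}$ becomes (up to a weight) $t^{\gamma}(\log(1/t))^{k-1}$; density of polynomials in $\log(1/t)$, hence of such functions, against the finite measure $dt$ on $(0,1)$ then follows from Stone–Weierstrass after checking the relevant function separates points and the weight is integrable — and the moment condition corresponds to vanishing to appropriate order at $t=0$, which one arranges by subtracting off a finite rational correction first. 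Everything else — the partial-fraction identification of $\mathcal{K}_\infty$, the norm estimate, and the reduction via $(1-A)^n$ — is routine given the machinery already set up in Sections~\ref{sec:prelim} and the definition \eqref{newcalcdef}.
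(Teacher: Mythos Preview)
Your approach via $L^1$-density of Laguerre functions is genuinely different from the paper's and works cleanly for $f\in\widetilde{\mathcal{M}}$: the operator-norm bound $\|g(A)\|\le N\|\mathcal{F}g_{(0)}\|_1$ together with the density of $\mathrm{span}\{s^{k-1}e^{-\gamma s}\}_{k\ge 1}$ in $L^1(0,\infty)$ indeed gives $r_j(A)v\to f(A)v$ with $r_j(A)v\in\mathcal{K}_\infty$. The paper, by contrast, never approximates at all: it shows that $Y:=\overline{\mathcal{K}_\infty((\gamma-A)^{-1},v)}$ is invariant under every resolvent $(\mu-A)^{-1}$, invokes a theorem of Miklav\v{c}i\v{c} to conclude that $Y$ is invariant under the whole semigroup $e^{tA}$, and then reads off $f(A)y=\int_0^\infty e^{sA}y\,\mathcal{F}f_{(0)}(s)\,ds\in Y$ for $y\in Y$ directly from the integral representation. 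Your route is more constructive; the paper's is shorter and sidesteps the density lemma.

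Where your argument breaks down is the passage from $\widetilde{\mathcal{M}}$ to $\mathcal{M}$. Both of your suggested closures fail. The claim ``$f-\widetilde r_j=(1-z)^n(h-r_j)\in\widetilde{\mathcal{M}}$ with $\|\mathcal{F}(f-\widetilde r_j)_{(0)}\|_1\to 0$'' is false in general: for the paradigmatic example $f(z)=e^{\tau z}\in\mathcal{M}\setminus\widetilde{\mathcal{M}}$ one has $\mathcal{F}f_{(0)}=\delta_\tau\notin L^1$, so $f-\widetilde r_j$ is never in $\widetilde{\mathcal{M}}$; more structurally, multiplying $h-r_j$ by $(1-z)^n$ corresponds on the Fourier side to applying $(1+\tfrac{d}{ds})^n$, and you only control $g-g_j$ in $L^1$, not in $W^{n,1}$. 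The ``graph norm'' alternative is circular: to get $r_j(A)v\to h(A)v$ in the graph norm of $(1-A)^n$ you would need exactly $(1-A)^n r_j(A)v\to(1-A)^n h(A)v$, which is the conclusion. The moment condition $r_j=O((\gamma-z)^{-(n+1)})$ ensures $\widetilde r_j(A)v\in\mathcal{K}_\infty$ but does nothing to make the unbounded operator $(1-A)^n$ continuous along your sequence.

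A clean repair in the spirit of your argument is to regularize the vector rather than the function: since $f(A)$ is bounded and $\mu^n(\mu-A)^{-n}v\to v$ as $\mu\to\infty$, it suffices to show $f(A)\mu^n(\mu-A)^{-n}v\in Y$ for each fixed $\mu>0$; and $f(z)(\mu-z)^{-n}=h(z)\bigl[(1-z)/(\mu-z)\bigr]^n\in\widetilde{\mathcal{M}}$ (bounded on $\mathrm{Re}\,z\le 0$, Fourier transform in $L^1$ by the partial-fraction expansion of $[(1-z)/(\mu-z)]^n$), so your $\widetilde{\mathcal{M}}$ argument applies to it. The paper achieves the same reduction abstractly: once $Y$ is semigroup-invariant, the restricted generator $A|_Y$ has its own functional calculus on $Y$, and $f(A)v=(1-A)^n g(A)v=(1-A|_Y)^n g(A|_Y)v=f(A|_Y)v\in Y$ without ever leaving $Y$.
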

\begin{proof}
If we define
\[
Y:=\overline{\mathcal{K}_\infty((\gamma-A)^{-1},v)}\,, 
\]
then $Y$ is an invariant subspace of $(\mu-A)^{-1}$ for all $\mu > 0$ (cf. proof of Theorem~4.6.1 in \cite{Mikl98}). Hence, we have 
\[
 (\mu -A)^{-1}y \in Y \quad \mathrm{for~all} \quad y \in Y\,,~ \mu > 0\,.  
\]
Theorem~4.6.1 in \cite{Mikl98} now states that $Y$ is an invariant subspace of our semigroup $e^{tA}$, $t \geq 0$, and $A$. Furthermore, the restriction $\left. e^{tA} \right|_{Y}$ of the semigroup $e^{tA}$ to $Y$ is again a semigroup with generator $\left. A \right|_{Y}$ and $\left. A \right|_{Y}y=Ay$ for all $y \in \mathcal{D}(A) \cap Y =: \mathcal{D}(\left. A \right|_Y)$. For $f \in \widetilde{\mathcal{M}} \subseteq \mathcal{M}$, we thus find
\[
f(A)y=\int_0^\infty e^{sA}y \,\mathcal{F}f_{(0)}(s)\,ds = \int_0^\infty e^{s\left. A \right|_Y} y\, \mathcal{F}f_{(0)}(s)\,ds
= f\left( \left. A \right|_Y \right)y \in Y \quad \mbox{for all} \quad y \in Y.
\]
Since $v \in Y$, we obtain $f(A)v\in Y$. Now we proceed with the case of functions belonging to $\mathcal{M}$. By the definition of $\mathcal{M}$, we have for $f \in \mathcal{M}$ and all $y \in Y$ that
\[
  f(A)y=(1-A)^n \left( \frac{f(z)}{(1-z)^n} \right)(A)y = (1-A)^n g(A)y \quad \mbox{with} \quad g(z)=\frac{f(z)}{(1-z)^n}\,, \quad g \in \widetilde{\mathcal{M}}\,,
\] 
where $n$ has been chosen appropriately. Because of $Y$ being $A$-invariant, we obtain
\[
(\gamma-A)^{l}y=\left(\gamma-\left. A \right|_Y \right)^{l}y \quad \mbox{for all} \quad y \in \mathcal{D}\left( \left( \left. A\right|_Y \right)^l \right), \quad l \in \N\,.
\] 
By the first part of the proof, since $g \in \widetilde{\mathcal{M}}$, we can conclude for $y \in Y$ that
\[
f(A)y = (1-A)^n g(A)y = (1-A)^n g\left( \left. A \right|_Y \right)y= \left( 1-\left. A \right|_Y \right)^n g\left( \left. A \right|_Y \right)y
= f\left(\left. A\right|_Y \right)y \in Y.
\]
Again, due to $v \in Y$, the statement $f(A)v \in Y$ follows.
\end{proof}

In the case that an index $n_0$ exists for which the resolvent Krylov subspace is invariant, we obtain from Theorem~\ref{fAinclK}, that 
\[
   f(A)v \in \mathcal{K}_{n_0}((\gamma-A)^{-1},v)=\overline{\mathcal{K}_\infty((\gamma-A)^{-1},v)}\,.
\]
Thus, $f(A)v$ can  be represented exactly in the finite-dimensional space \eqref{reskrysp} with index $n = n_0$.

We also study subspaces of $X$ of the type 
\begin{equation} \label{smoothiv}
\mathcal{K}_\infty((\gamma-A)^{-1},(\gamma-A)^{-q}v), \qquad q = 1,2,3,\ldots
\end{equation}
with a smoothed initial vector. These spaces are usually different. For example, if $v \in X \backslash \mathcal{D}(A)$ holds true, then $v$ is in the space \eqref{maxresKry}, but $v$ is not in any of the spaces \eqref{smoothiv}, which are all subsets of $\mathcal{D}(A)$. An intriguing fact is that the closures of the spaces \eqref{smoothiv} are identical and coincide with the closure of \eqref{maxresKry}. Hence, from a numerical analyst's point of view, if $w \in X$ can be approximated to an arbitrary precision in any of the spaces \eqref{maxresKry} or \eqref{smoothiv}, then $w$ can be approximated in all spaces to an arbitrary precision.
\begin{lemma} For every $q =1,2,3,\ldots$ and $v \in X$, we have
\[
\overline{\mathcal{K}_\infty((\gamma-A)^{-1},(\gamma-A)^{-q}v)} = \overline{\mathcal{K}_\infty((\gamma-A)^{-1},v)}\,. 
\]
\end{lemma}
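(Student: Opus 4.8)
The plan is to prove the two inclusions separately; only one of them requires work. The inclusion
\[
\overline{\mathcal{K}_\infty((\gamma-A)^{-1},(\gamma-A)^{-q}v)} \subseteq \overline{\mathcal{K}_\infty((\gamma-A)^{-1},v)}
\]
is immediate: for every $k\ge 0$ the vector $(\gamma-A)^{-k}\bigl((\gamma-A)^{-q}v\bigr)=(\gamma-A)^{-(k+q)}v$ already lies in $\mathcal{K}_\infty((\gamma-A)^{-1},v)$, so $\mathcal{K}_\infty((\gamma-A)^{-1},(\gamma-A)^{-q}v)\subseteq\mathcal{K}_\infty((\gamma-A)^{-1},v)$, and passing to closures preserves this. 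The real content is the reverse inclusion, for which I would reconstruct $v$ from the smoothed vector $(\gamma-A)^{-q}v$ by applying $A$.

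Write $Z:=\overline{\mathcal{K}_\infty((\gamma-A)^{-1},(\gamma-A)^{-q}v)}$. Exactly as in the proof of Theorem~\ref{fAinclK}, $Z$ is invariant under $(\mu-A)^{-1}$ for every $\mu>0$ (cf.\ the proof of Theorem~4.6.1 in \cite{Mikl98}), and by that theorem $Z$ is invariant under the semigroup and under $A$, the part $A|_Z$ generating the restricted semigroup with $\mathcal{D}(A|_Z)=\mathcal{D}(A)\cap Z$ and $A|_Z y=Ay$ there. Now $(\gamma-A)^{-q}v$ lies in the range of $(\gamma-A)^{-1}$, hence in $\mathcal{D}(A)$, and it is the generating vector of $Z$, hence in $Z$; so $(\gamma-A)^{-q}v\in\mathcal{D}(A|_Z)$ and therefore $A(\gamma-A)^{-q}v=A|_Z(\gamma-A)^{-q}v\in Z$. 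Consequently
\[
(\gamma-A)^{-(q-1)}v=(\gamma I-A)(\gamma-A)^{-q}v=\gamma(\gamma-A)^{-q}v-A(\gamma-A)^{-q}v\in Z .
\]
For $q\ge 2$ the vector $(\gamma-A)^{-(q-1)}v$ again belongs to $\mathcal{D}(A)\cap Z=\mathcal{D}(A|_Z)$, so the same computation gives $(\gamma-A)^{-(q-2)}v\in Z$; iterating $q$ times yields $v\in Z$. Since $Z$ is closed and invariant under $(\gamma-A)^{-1}$, it contains $\mathcal{K}_\infty((\gamma-A)^{-1},v)$ and hence its closure, which is the reverse inclusion. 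Combining the two inclusions finishes the proof.

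The only step needing care is the invocation of the part $A|_Z$: one must be sure that the generating vector $(\gamma-A)^{-q}v$ genuinely lies in $\mathcal{D}(A|_Z)$ and that $A|_Z$ maps $\mathcal{D}(A|_Z)$ into $Z$ — both are part of the structure already imported from \cite{Mikl98} in Theorem~\ref{fAinclK}, so this is the main (and mild) obstacle; the two displayed identities are routine. As an alternative organization, one may first establish the case $q=1$ and then obtain the general statement by applying that case successively to the vectors $(\gamma-A)^{-(q-1)}v,(\gamma-A)^{-(q-2)}v,\dots,v$.
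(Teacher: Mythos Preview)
Your proof is correct, but it differs from the paper's argument in the key step showing $v\in Z$. The paper never invokes $A$-invariance of $Z$; it uses only resolvent-invariance together with the resolvent equation and the approximate-identity property of $\mu(\mu-A)^{-1}$. Concretely, for $q=1$ the paper observes that $(\gamma-A)^{-1}v\in Z$ and that $Z$ is $(\mu-A)^{-1}$-invariant for every $\mu>0$, so
\[
(\mu-A)^{-1}v=(\gamma-A)^{-1}v+(\gamma-\mu)(\mu-A)^{-1}(\gamma-A)^{-1}v\in Z,
\]
hence $\mu(\mu-A)^{-1}v\in Z$ for all $\mu>0$, and letting $\mu\to\infty$ gives $v\in Z$ since $Z$ is closed. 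The general $q$ then follows by induction.

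Your route instead exploits the stronger fact (already imported from \cite{Mikl98} in the proof of Theorem~\ref{fAinclK}) that $Z$ is $A$-invariant, and recovers $(\gamma-A)^{-(q-1)}v$ from $(\gamma-A)^{-q}v$ by the purely algebraic identity $(\gamma I-A)(\gamma-A)^{-q}v=(\gamma-A)^{-(q-1)}v$. This is shorter and avoids the limiting argument, at the cost of relying on the $A$-invariance of $Z$; the paper's approach needs only the weaker resolvent-invariance and the standard semigroup fact $\mu(\mu-A)^{-1}v\to v$. Both arguments reduce to $q=1$ plus induction, as you note in your final remark.
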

\begin{proof}
If we set
\begin{equation} 
Y:=\overline{\mathcal{K}_\infty((\gamma-A)^{-1},(\gamma-A)^{-1}v)}\,,
\end{equation}
then we have, analogously to the previous proof, that
\[
 (\mu -A)^{-1}y \in Y \quad \mathrm{for~all} \quad y \in Y\,,~ \mu > 0\,.  
\]
Obviously, $(\gamma-A)^{-1}v \in Y$ and hence $(\gamma-\mu)(\mu-A)^{-1}(\gamma-A)^{-1}v \in Y$. By the resolvent equation (cf. (1.2) on page 239 in \cite{engelnagel00}), it follows
\[
(\mu-A)^{-1}v = (\gamma-A)^{-1}v+(\gamma-\mu)(\mu-A)^{-1}(\gamma-A)^{-1}v \in Y\,.
\]
Since $\mu > 0$ has been arbitrarily chosen, we have $\mu(\mu-A)^{-1}v \in Y$ for all $\mu > 0$. Due to the well-known fact that
\[
  \lim_{\mu \rightarrow \infty} \mu(\mu-A)^{-1}v=v
\]
(e.g. Lemma 3.4, p.\,73 in \cite{engelnagel00}), we find $v \in Y$, since $\mu(\mu-A)^{-1}v \in Y$ and $Y$ is closed. This immediately shows our assertion for $q=1$. The statement for $q > 1$ now follows by induction.
\end{proof}

%%%%%%%%%%%%%%%%%%%%%%%%%%%%%%%%%%%%%%%%%%%%%%%%%%%%%%%%%%%%%%%%%%%%%%%%%
\section{Smoothing operators with range in the resolvent Krylov subspace} \label{sec:smoothing}
%%%%%%%%%%%%%%%%%%%%%%%%%%%%%%%%%%%%%%%%%%%%%%%%%%%%%%%%%%%%%%%%%%%%%%%%%

We study in this section, how well an initial vector $v \in \mathcal{D}(A^q)$ can be approximated in different resolvent Krylov subspaces. These bounds will be necessary to prove our main theorems. The constants occurring in the following bounds and estimates will always be generic constants denoted by $C(param_1, param_2, \ldots)$, where the terms in brackets indicate the parameters on which $C$ depends.

The first lemma provides an error estimate for the approximation of $v \in \mathcal{D}(A^q)$ in the special resolvent Krylov subspace $\mathcal{K}_q((\sqrt{n}-A)^{-1}, (\sqrt{n}-A)^{-q}v)$.

\begin{lemma}\label{smoothop} There are bounded operators $H_{n,q}$ of the form
\[
H_{n,q} = \sum_{k=q}^{2q-1} h_k^q\left( \frac{\sqrt{n}}{\sqrt{n}-A} \right)^k, \quad
h_k^q= \binom{2q-1}{k} \sum_{l=0}^{k-q} \binom{k}{l}(-1)^l=\binom{2q-1}{k}\binom{k-1}{k-q}(-1)^{k-q}\,,
\]
such that, for all $v \in \mathcal{D}(A^q)$, we have
\[
\|H_{n,q}v-v\| \leq \frac{C(q,N)}{n^{\frac{q}{2}}} \|A^qv\|
\quad \text{and} \quad 
\|H_{n,q}\| \leq C(q,N)\,,
\]
where the constants depend only on $q$ and the bound $N$ of the bounded semigroup, but not on $n$. 
\end{lemma}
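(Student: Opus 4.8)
The plan is to separate the trivial norm bound from the error estimate, and to obtain the latter by writing $I-H_{n,q}$ as a ``function of $A$'' that factors through $A^q$ and then invoking Lemma~\ref{btlemma4}. First I would set $R:=\sqrt n\,(\sqrt n-A)^{-1}$, which is well defined because $\sqrt n$ lies in the resolvent set of $A$; the standard resolvent bound for bounded semigroups, $\|(\lambda-A)^{-1}\|\le N/\mbox{Re}\,\lambda$ for $\mbox{Re}\,\lambda>0$, gives $\|R\|\le N$, hence $\|R^k\|\le N^k$. Since $H_{n,q}=\sum_{k=q}^{2q-1}h_k^q R^k$, the triangle inequality immediately yields $\|H_{n,q}\|\le\sum_{k=q}^{2q-1}|h_k^q|\,N^k=:C(q,N)$, which is the second assertion.

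For the error estimate I would pass to the rational variable $w=w(z):=\frac{\sqrt n}{\sqrt n-z}$, so that $w(A)=R$ and $H_{n,q}$ corresponds to the polynomial $z\mapsto\sum_{k=q}^{2q-1}h_k^q\,w(z)^k$. The key algebraic observation is that this is exactly an upper partial binomial sum,
\[
\sum_{k=q}^{2q-1}h_k^q\,w^k=\sum_{k=q}^{2q-1}\binom{2q-1}{k}w^k(1-w)^{2q-1-k},
\]
i.e.\ the prescribed coefficients $h_k^q$ are precisely the coefficients of the right-hand side; this I would check by a routine binomial manipulation using $\binom{2q-1}{j}\binom{2q-1-j}{k-j}=\binom{2q-1}{k}\binom{k}{j}$ together with $\sum_{l=0}^{m}(-1)^l\binom{k}{l}=(-1)^m\binom{k-1}{m}$. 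Since $\sum_{k=0}^{2q-1}\binom{2q-1}{k}w^k(1-w)^{2q-1-k}=1$ by the binomial theorem, subtracting gives
\[
1-H_{n,q}(z)=\sum_{k=0}^{q-1}\binom{2q-1}{k}w^k(1-w)^{2q-1-k}=(1-w)^q\,\phi_q(w),\qquad \phi_q(w):=\sum_{k=0}^{q-1}\binom{2q-1}{k}w^k(1-w)^{q-1-k},
\]
where $(1-w)^q$ factors out because $2q-1-k\ge q$ for $k\le q-1$, and $\phi_q$ is a polynomial of degree $q-1$ with $\phi_q(0)=1$.

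Now I would translate back via $1-w=\frac{-z}{\sqrt n-z}$, hence $z=\sqrt n\,\frac{w-1}{w}$ and $z^q=(-1)^q n^{q/2}\frac{(1-w)^q}{w^q}$. The factor $(1-w)^q$ then cancels and one is left with
\[
g_{n,q}(z):=\frac{1-H_{n,q}(z)}{z^q}=\frac{(-1)^q}{n^{q/2}}\,w^q\phi_q(w)=\frac{(-1)^q}{n^{q/2}}\sum_{k=q}^{2q-1}d_k\,w^k,
\]
a polynomial in $w$ whose coefficients $d_k$ depend only on $q$. In particular $g_{n,q}\in\mathcal M$ (it is a polynomial in $w(z)=\frac{\sqrt n}{\sqrt n-z}\in\widetilde{\mathcal M}$, and $\mathcal M$ is an algebra by Lemma~\ref{homomorphy2}), with $g_{n,q}(A)=\frac{(-1)^q}{n^{q/2}}\sum_{k=q}^{2q-1}d_kR^k$, so exactly as before $\|g_{n,q}(A)\|\le\frac{1}{n^{q/2}}\sum_{k=q}^{2q-1}|d_k|\,N^k=:\frac{C(q,N)}{n^{q/2}}$ with the constant independent of $n$. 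Since $1-H_{n,q}(z)=z^q g_{n,q}(z)$ with $1-H_{n,q},g_{n,q}\in\mathcal M$, Lemma~\ref{btlemma4} (with $l=q$) gives $(I-H_{n,q})v=g_{n,q}(A)A^q v$ for all $v\in\D(A^q)$ -- here using that \eqref{newcalcdef} is a homomorphism, that the constant $1$ maps to $I$, and that $H_{n,q}$ maps to $\sum_k h_k^q R^k$. Combining with the operator bound yields $\|H_{n,q}v-v\|=\|g_{n,q}(A)A^q v\|\le\frac{C(q,N)}{n^{q/2}}\|A^q v\|$, as claimed.

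I expect the only genuinely delicate step to be the combinatorial identification of the prescribed $h_k^q$ with the coefficients of the upper partial binomial sum $\sum_{k=q}^{2q-1}\binom{2q-1}{k}w^k(1-w)^{2q-1-k}$, i.e.\ recognizing this closed form behind the definition of $H_{n,q}$. Once that is in place, the factorization $1-H_{n,q}(z)=z^q g_{n,q}(z)$, the cancellation of $(1-w)^q$, and the bookkeeping of the single power $n^{-q/2}$ are immediate, and the entire semigroup content is absorbed into the elementary bound $\|R\|\le N$ and into Lemma~\ref{btlemma4}.
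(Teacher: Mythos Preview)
Your proposal is correct and follows essentially the same route as the paper: both factor $1-H_{n,q}(z)=z^q g(z)$ with $g$ a linear combination of $(\sqrt n-z)^{-k}$, $k=q,\ldots,2q-1$, bound $\|g(A)\|$ via $\|\sqrt n(\sqrt n-A)^{-1}\|\le N$, and then invoke Lemma~\ref{btlemma4}. The only cosmetic differences are that the paper absorbs the $n$-dependence by rescaling to $B=A/\sqrt n$ rather than tracking $n^{-q/2}$ explicitly, and merely asserts the algebraic cancellation you verify through the partial-binomial-sum identity.
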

\begin{proof}
The coefficients $h_k^q$, $k = q, \ldots, 2q-1$, are chosen such that
\[
  g(z) := \frac{1-\sum_{k=q}^{2q-1} h_k^q (1-z)^{-k}}{z^q}
\]
can be continued to a holomorphic function on $\C\backslash \{1\}$. One can check that $g(z)$ is a linear combination of $(1-z)^{-q}, (1-z)^{-(q+1)}, \ldots, (1-z)^{-(2q-1)}$. For any generator $B$ of a bounded semigroup $e^{tB}$ with $\|e^{tB}\| \leq N$, we have $\|(1-B)^{-1}\| \leq N$ (cf. Theorem 1.10 on page 55 in \cite{engelnagel00}) and thus
\[
\left\| \frac{1-\sum_{k=q}^{2q-1} h_k^q (1-B)^{-k}}{B^q}\right\| \leq C(q,N)\,.
\]
We will now use this estimate for $B = \frac{1}{\sqrt{n}} A$, which generates a semigroup satisfying $\|e^{\frac{t}{\sqrt{n}} A}\| \leq N$ to bound the difference $\| H_{n,q}v-v\|$.
Since $g \in \mathcal{M}$, $1 \in \mathcal{M}$ and $\sum_{k=q}^{2q-1} h_k^q (1-z)^{-k} \in \mathcal{M}$ are functions that belong to our extended functional calculus, we obtain according to Lemma~\ref{btlemma4}
\begin{eqnarray*}
\left\| H_{n,q}v-v\right\| &=& 
\left\| 
   \left( 
      1-\sum_{k=q}^{2q-1} h_k^q
	\left(\frac{\sqrt{n}}{\sqrt{n}-A}\right)^k 
   \right)v 
\right\| \\[1ex]
&\leq& 
\left\|
\frac{
      1-\sum_{k=q}^{2q-1} h_k^q
      \left(1-\frac{1}{\sqrt{n}}A\right)^{-k} 
}{
\left( \frac{1}{\sqrt{n}} A\right)^q
}
\right\|  \cdot \left\|\left( \tfrac{1}{\sqrt{n}} A\right)^qv\right\|
\leq  \frac{C(q,N)}{n^{\frac{q}{2}}}\|A^qv\|\,.
\end{eqnarray*}
The bound on $H_{n,q}$ follows, since $\|(\sqrt{n}-A)^{-1}\|\leq N/\sqrt{n}$ (e.g. Theorem~1.10 on page 55 in \cite{engelnagel00}) and therefore $\| \sqrt{n} (\sqrt{n}-A)^{-1} \| \leq N$.
\end{proof}

In the next lemma, we study the best approximation $W_n$ to $\sqrt{n}(\sqrt{n}-A)^{-1}$ in the resolvent subspace
\begin{equation} \label{reskryop}
\mathcal{R}_n(\gamma,A) := \mathrm{span} \left\{ (\gamma-A)^{-1}, (\gamma-A)^{-2}, \ldots, (\gamma-A)^{-n}\right\}, \quad \gamma > 0\,.
\end{equation}

\begin{lemma} \label{smoothapprox}
For any $p \in \N$ and any $n \in \N$, there are operators $W_n$ of the form 
\[
W_n = \sum_{k=1}^n w_k^n \frac{1}{(\gamma-A)^k} 
\in \mathcal{R}_n(\gamma,A)
\quad \text{with} \quad w_k^n \in \C
\]
such that
\[
\left\| \frac{\sqrt{n}}{\sqrt{n}-A}-W_n \right\| 
\leq \frac{C(\gamma,p,N)}{n^{\frac{p}{2}}}
\quad \text{and} \quad \|W_n\| \leq C(\gamma,p,N)
\]
with $n$-independent generic constants $C(\gamma,p,N)$.
\end{lemma}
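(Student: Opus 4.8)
The plan is to reduce the claim to a scalar rational approximation problem on the spectrum of the bounded operator $B:=(\gamma-A)^{-1}$ and then invoke the holomorphic (Riesz--Dunford) functional calculus for $B$. Since $\|e^{tA}\|\le N$, the Hille--Yosida estimate gives $\|(\lambda-A)^{-1}\|\le N/\mathrm{Re}\,\lambda$ for $\mathrm{Re}\,\lambda>0$, so $\sigma(A)\subseteq\{\mathrm{Re}\,z\le 0\}$ and therefore $\sigma(B)$ lies in the closed disk $\overline{D}$ of radius $\tfrac1{2\gamma}$ centred at $c_0:=\tfrac1{2\gamma}$, the image of the closed left half-plane under $z\mapsto(\gamma-z)^{-1}$. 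Under the substitution $\zeta=(\gamma-z)^{-1}$ the operator $\sqrt n(\sqrt n-A)^{-1}$ corresponds to the scalar function
\[
 r(\zeta)=\frac{\sqrt n\,\zeta}{1+(\sqrt n-\gamma)\zeta},
\]
which is holomorphic on $\overline{D}$ and has a single pole at $\zeta_0=-1/(\sqrt n-\gamma)$, lying just outside $\overline D$ at distance of order $n^{-1/2}$. Because $r(0)=0$ I factor $r(\zeta)=\zeta\,\widetilde r(\zeta)$ with $\widetilde r(\zeta)=\sqrt n/(1+(\sqrt n-\gamma)\zeta)$, let $Q_{n-1}$ be the degree-$(n-1)$ Taylor polynomial of $\widetilde r$ about the centre $c_0$ of $\overline D$ (its disk of convergence has radius $|c_0-\zeta_0|=\tfrac1{2\gamma}+\tfrac1{\sqrt n-\gamma}>\tfrac1{2\gamma}$, hence contains $\overline D$), and set
\[
 W_n:=(\gamma-A)^{-1}\,Q_{n-1}\!\bigl((\gamma-A)^{-1}\bigr)=\sum_{k=1}^{n}w_k^{n}(\gamma-A)^{-k}\in\mathcal{R}_n(\gamma,A).
\]
This single family $W_n$ will serve for every $p\in\N$; the index $p$ will enter only the constants.

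For the error I would use the explicit geometric remainder of the Taylor expansion, which yields the exact operator identity
\[
 \frac{\sqrt n}{\sqrt n-A}-W_n=\Bigl(\frac{c_0-B}{c_0-\zeta_0}\Bigr)^{\!n}\frac{\sqrt n}{\sqrt n-A},
\]
with $c_0-B=\tfrac1{2\gamma}(\gamma+A)(\gamma-A)^{-1}$ a Cayley-type operator. One cannot simply take norms here: only the spectral radius of $c_0-B$ is $\le\tfrac1{2\gamma}<|c_0-\zeta_0|$, whereas its norm need not be smaller than $|c_0-\zeta_0|$. Instead I represent the right-hand side through the Riesz--Dunford calculus for $B$,
\[
 \frac{\sqrt n}{\sqrt n-A}-W_n=\frac{1}{2\pi i}\oint_{\Gamma}\Bigl(\frac{c_0-\zeta}{c_0-\zeta_0}\Bigr)^{\!n} r(\zeta)\,(\zeta-B)^{-1}\,d\zeta,
\]
where $\Gamma$ is the circle of radius $c_0+\delta$ about $c_0$ with $\delta:=\tfrac1{2(\sqrt n-\gamma)}$; this $\Gamma$ encircles $\sigma(B)\subseteq\overline D$ once but not $\zeta_0$. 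The key ingredient is the resolvent identity
\[
 (\zeta-B)^{-1}=\frac1\zeta I+\frac1{\zeta^{2}}\bigl((\gamma-\tfrac1\zeta)-A\bigr)^{-1},
\]
valid whenever $\mathrm{Re}(1/\zeta)<\gamma$, i.e.\ for all $\zeta$ off $\overline D$; combined with Hille--Yosida applied to $\lambda=\gamma-1/\zeta$ and the elementary geometry of $\Gamma$ it gives $\|(\zeta-B)^{-1}\|\le C(\gamma,N)\sqrt n$ uniformly on $\Gamma$. On $\Gamma$ one also checks — and here the precise value of $\delta$ is used — that $\bigl|(c_0-\zeta)/(c_0-\zeta_0)\bigr|=q:=\sqrt n/(\sqrt n+\gamma)<1$ and $|1+(\sqrt n-\gamma)\zeta|=(\sqrt n-\gamma)|\zeta-\zeta_0|\ge\tfrac12$, so that $|r(\zeta)|\le C(\gamma)\sqrt n$. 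Since $\Gamma$ has bounded length, these three estimates combine to
\[
 \Bigl\|\frac{\sqrt n}{\sqrt n-A}-W_n\Bigr\|\le C(\gamma,N)\,n\,q^{\,n},\qquad q=1-\frac{\gamma}{\sqrt n+\gamma},
\]
and $q^{\,n}\le\exp\!\bigl(-\gamma n/(\sqrt n+\gamma)\bigr)$ decays faster than any negative power of $n$, so the bound $C(\gamma,p,N)n^{-p/2}$ follows for each $p$. The estimate $\|W_n\|\le C(\gamma,p,N)$ is then immediate from $\|\sqrt n(\sqrt n-A)^{-1}\|\le N$ and the triangle inequality; for the finitely many $n$ with $\sqrt n\le 2\gamma$ one may simply take $W_n:=0$ and absorb the resulting finite constant.

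The main obstacle is precisely the point flagged after the remainder identity: $(c_0-B)^{n}$ may have norm far exceeding $|c_0-\zeta_0|^{n}$, so a bare submultiplicative bound fails and one is forced onto the Cauchy-integral route. The delicate part there is the balance between the contour $\Gamma$ having to collapse onto $\sigma(B)$ at rate $\delta\sim n^{-1/2}$ — which inflates both $\|(\zeta-B)^{-1}\|$ and $|r(\zeta)|$ to order $\sqrt n$ on $\Gamma$ — and the geometric gain $q^{\,n}$; choosing $\delta=\tfrac1{2(\sqrt n-\gamma)}$ keeps $q=\sqrt n/(\sqrt n+\gamma)$ bounded away from $1$, so the polynomial-in-$n$ losses are harmless and even leave room to spare.
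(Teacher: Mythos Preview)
Your argument is correct, and it takes a genuinely different route from the paper's. The paper stays inside its Hille--Phillips calculus: it writes both $\sqrt n(\sqrt n-A)^{-1}$ and the basis elements $A^{k-1}(\gamma-A)^{-k}$ as Laplace integrals of the semigroup (the latter with Laguerre polynomial weights), so that the operator-norm error becomes a weighted $L^1$ Laguerre approximation problem for the scalar function $s\mapsto\sqrt n\,e^{(1-\sqrt n/\gamma)s}$, which is then dispatched by the Junggeburth--Nguyen bound $E_m(g)\le C(r)m^{-r/2}\|g^{(r)}\varphi^r\omega_0\|_1$. You instead pass to the bounded resolvent $B=(\gamma-A)^{-1}$, locate $\sigma(B)$ in the M\"obius image disk, Taylor-expand the target about the disk's centre, and control the explicit geometric remainder via a Cauchy integral on a contour collapsing at rate $n^{-1/2}$, with the resolvent of $B$ bounded through Hille--Yosida pulled back by $\zeta\mapsto\gamma-1/\zeta$.

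What each approach buys: your argument is self-contained---no external Laguerre approximation theorem---and in fact proves more than stated: the error is $O\bigl(n\exp(-\gamma n/(\sqrt n+\gamma))\bigr)=O(n\,e^{-c\sqrt n})$, which beats every $n^{-p/2}$, and a single explicit $W_n$ (the Taylor polynomial) works simultaneously for all $p$. The paper's route, on the other hand, reuses exactly the Laguerre machinery already set up for the $\varphi$-functions (their reference to Theorem~4.1/4.2 in \cite{ratkryphi11}), so the same template handles this lemma and the later ones uniformly. Your careful handling of the contour---choosing $\delta$ so that $q=\sqrt n/(\sqrt n+\gamma)$ stays a fixed distance below $1$ while the $O(\sqrt n)$ blow-ups of $\|(\zeta-B)^{-1}\|$ and $|r(\zeta)|$ are absorbed---is exactly the point where a naive spectral-radius argument would fail, and you flag this correctly.
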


\begin{proof}
We estimate the best approximation in the space \eqref{reskryop}. This best approximation exists, since the space is finite. By standard calculations with the resolvent, we obtain
\[
\mathcal{R}_m(\gamma,A) = \mathrm{span} \left\{ (\gamma-A)^{-1}, A(\gamma-A)^{-2}, \ldots, A^{m-1}(\gamma-A)^{-m} \right\},
\quad \gamma > 0\,.
\]
\begin{comment}%%%%%%%%%%%%%%%%%%%%%%%%%%%%%%%%%%%%%
by standard calculations with the resolvent, namely,
\begin{align*}
\frac{A^{k-1}}{(\gamma-A)^k} 
&= \frac{1}{\gamma-A}\cdot \frac{A^{k-1}}{(\gamma-A)^{k-1}} 
 = \frac{1}{\gamma-A}\left[ \frac{\gamma}{\gamma-A}-I \right]^k 
 = (-1)^{k-1} \sum_{l=0}^{k-1} \binom{k-1}{l} \frac{(-\gamma)^l}{(\gamma-A)^{l+1}} \in \mathcal{R}_k %\\
\end{align*}
and
\begin{align*}
\frac{1}{(\gamma-A)^k} 
&= \frac{1}{\gamma^{k-1}}\cdot \frac{1}{\gamma-A} \cdot
   \left( \frac{\gamma}{\gamma-A}-I+I \right)^{k-1} 
 = \frac{1}{\gamma^{k-1}}\cdot \frac{1}{\gamma-A} \cdot
   \left( \frac{A}{\gamma-A}+I \right)^{k-1}  \\
&= \frac{1}{\gamma^{k-1}} \sum_{l=0}^{k-1} \binom{k-1}{l} \frac{A^l}{(\gamma-A)^{l+1}} \in 
\mathrm{span} \left\{ (\gamma-A)^{-1}, \ldots, A^{m-1}(\gamma-A)^{-m}\right\}\,.   
\end{align*}
\end{comment}%%%%%%%%%%%%%%%%%%%%%%%%%%%%%%%%%%%%%
We now proceed analogously to the proof of Theorem~4.1 in \cite{ratkryphi11}. By using
\begin{align*}
  \frac{\sqrt{n}}{\sqrt{n}-A} &= \sqrt{n} \int_0^\infty e^{sA} e^{-\sqrt{n}s}\,ds\,, \\[1ex]
  \frac{A^{m-1}}{(\gamma-A)^m} &= \int_0^\infty e^{sA} e^{-\gamma s}(-1)^{m-1}L_{m-1}(\gamma s)\,ds
\end{align*}
with the $m$-th Laguerre polynomial
\[
L_m(x)=\sum_{k=0}^m \binom{m}{k} \frac{(-1)^k}{k!}\,x^k,
\]
%and linearity of the integral, 
it follows
\[
\left\| \frac{\sqrt{n}}{\sqrt{n}-A}-\sum_{k=1}^m a_k \frac{A^{k-1}}{(\gamma-A)^k} \right\| 
\leq \frac{N}{\gamma} \cdot E_m \big( e^s \mathcal{F}f_{(0)}(\tfrac{s}{\gamma}) \big)\,, 
\quad e^s\mathcal{F}f_{(0)}(\tfrac{s}{\gamma})=\sqrt{n}e^{(1-\frac{\sqrt{n}}{\gamma})s},
\] 
where $\mathcal{F}f_{(0)}$ is the Fourier transform of $f(z) = \frac{\sqrt{n}}{\sqrt{n}-z}$ restricted to $\text{Re}~z = 0$. It remains to estimate
\[
E_m \big( e^s \mathcal{F}f_{(0)}(\tfrac{s}{\gamma}) \big)
= \inf_{a_1,\ldots,a_m} 
\big\| e^{-s} \big( \sqrt{n}e^{(1-\frac{\sqrt{n}}{\gamma})s} - \sum_{k=1}^m a_k (-1)^{k-1} L_{k-1}(s) \big) \big\|_1\,.  
\]
Let $L_{\omega_0}^1(\R)$ be the space of Lebesgue integrable functions with respect to the weight function $\omega_0$. Moreover, we equip the space $W_r^1(\omega_0) := \left\{ g \in L_{\omega_0}^1(\R)\,:\, \left\| g^{(r)}\varphi^r\omega_0\right\|_1 < \infty \right\}$ with the norm
\[
\|g\|_{W^1_r(\omega_0)} := \| g\omega_0\|_1 + \|g^{(r)}\varphi^r\omega_0 \|_1\,,
\quad \text{where} \quad \varphi(s) = \sqrt{s}\,, \quad \omega_0(s) = e^{-s}.
\]
According to \cite{JoNguy89}, there exists for any $r \geq 1$ a constant $C(r)$ such that we have for any $g \in W_r^1(\omega_0)$
\[
E_m(g) \leq \frac{C(r)}{m^{\frac{r}{2}}} \|g^{(r)}\varphi^r\omega_0\|_1\,, 
\]
where $C(r)$ neither depends on $m$ nor on $g$. Our function 
\[
g(s)=e^s\mathcal{F}f_{(0)}(\tfrac{s}{\gamma})=\sqrt{n}e^{(1-\frac{\sqrt{n}}{\gamma})s}, 
\]
is even smoother and we obtain
\begin{align*}
\| g^{(r)}(s)(\sqrt{s})^r e^{-s} \|_1 
&= 
\sqrt{n}\int_0^\infty \left| e^{-s} \left( 1-\frac{\sqrt{n}}{\gamma} \right)^r s^{ \frac{r}{2}} e^{\left( 1-\frac{\sqrt{n}}{\gamma} \right)s} \right| \,ds \\[1ex]
&=
\sqrt{n} \left|  1-\frac{\sqrt{n}}{\gamma}\right|^r
\int_0^\infty s^{\frac{r}{2}}e^{-\tfrac{\sqrt{n}}{\gamma}s}\,ds \\[1ex]
&=
\sqrt{n} \left|  1-\frac{\sqrt{n}}{\gamma}\right|^r \cdot \frac{\gamma}{\sqrt{n}}\cdot \left(\frac{\gamma}{\sqrt{n}}\right)^{\frac{r}{2}}
\int_0^\infty s^{\frac{r}{2}}e^{-s}\,ds \\[1ex]
&= 
\gamma \left(\frac{\sqrt{n}}{\gamma}\right)^{\frac{r}{2}}
\left|  \frac{\gamma}{\sqrt{n}}-1\right|^r \Gamma\left( \tfrac{r}{2}+1\right).
\end{align*}
Hence, we have
\[
E_m(g) \leq C(\gamma,r) \left( \frac{\sqrt{n}}{m}\right)^{\frac{r}{2}}  \left|  \frac{\gamma}{\sqrt{n}}-1\right|^r \Gamma\left( \tfrac{r}{2}+1\right)
\]
and therefore with the choice $r=2p$ and $m=n$
\[
E_n(g) \leq C(\gamma,2p) \frac{p!}{n^{\frac{p}{2}}}  \left| \frac{\gamma}{\sqrt{n}}-1\right|^{2p} \leq \frac{C(\gamma,p)}{n^{\frac{p}{2}}}\,.
\]
Choosing the coefficients $a_k$ in accordance with the coefficients $w_k^n$ that belong to a best approximation, the first statement is proved. Since
\[
\left\| \sum_{k=1}^n w_k^n \frac{1}{(\gamma-A)^k} \right\|
\leq
\left\| \frac{\sqrt{n}}{\sqrt{n}-A}\right\| + 
\left\| 
\frac{\sqrt{n}}{\sqrt{n}-A}-\sum_{k=1}^n w_k^n \frac{1}{(\gamma-A)^k}
\right\| \leq N+C(\gamma,p) = C(\gamma,p,N)
\]
the second statement is an immediate consequence.
\end{proof}

%\begin{remark}
The choice $\gamma=\sqrt{n}$ in Lemma~\ref{smoothapprox} obviously gives error zero, since the resolvent $\sqrt{n}(\sqrt{n}-A)^{-1}$ can be represented exactly in the resolvent subspace $\mathcal{R}_n(\sqrt{n},A)$.
%\end{remark}

We now continue with two further lemmas that state how well a vector $v \in \mathcal{D}(A^q)$ can be approximated in resolvent Krylov subspaces of type \eqref{smoothiv}. 

\begin{lemma} \label{smoothprep} There exist operators $\tilde{S}_{n,q}$ with 
\[
\tilde{S}_{n,q}v \in \mathcal{K}_{(2q-1)(n-1)+q}((\gamma-A)^{-1}, (\gamma-A)^{-q}v)
\]
such that for all $v \in \mathcal{D}(A^q)$ and $n \in \N$ the bounds
\[
\|\tilde{S}_{n,q} v - v\| 
\leq \frac{C(\gamma,q,N)}{n^{\frac{q}{2}}}( \|v\|+\|A^qv\|)\,,
\quad \|\tilde{S}_{n,q}\| \leq C(\gamma,q,N)\,,
\]
hold true with constants $C(\gamma,q,N)$ not depending on $n$. Moreover, we have
\[
A^l\tilde{S}_{n,q} v=\tilde{S}_{n,q}A^lv 
\quad \mbox{for} \quad v \in \mathcal{D}(A^l)\,.
\]
\end{lemma}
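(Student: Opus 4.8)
The plan is to construct $\tilde S_{n,q}$ by starting from the smoothing operator $H_{n,q}=\sum_{k=q}^{2q-1}h_k^q M_n^{\,k}$ of Lemma~\ref{smoothop}, where I abbreviate $M_n:=\sqrt{n}(\sqrt{n}-A)^{-1}$, and replacing each factor $M_n$ by the near-best resolvent approximation $W_n=\sum_{j=1}^n w_j^n(\gamma-A)^{-j}$ furnished by Lemma~\ref{smoothapprox} with the parameter choice $p=q$. So I would set
\[
\tilde S_{n,q}:=\sum_{k=q}^{2q-1}h_k^q\,W_n^{\,k},
\]
a bounded operator (independent of $v$) that is a polynomial in $(\gamma-A)^{-1}$ with lowest power $(\gamma-A)^{-q}$. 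For the membership claim: $W_n^{\,k}$ is a complex linear combination of the powers $(\gamma-A)^{-j}$ with $k\le j\le kn$, and since every surviving index obeys $q\le k\le 2q-1$, each vector $(\gamma-A)^{-j}v$ appearing in $\tilde S_{n,q}v$ equals $(\gamma-A)^{-(j-q)}\big((\gamma-A)^{-q}v\big)$ with $0\le j-q\le kn-q\le(2q-1)n-q=(2q-1)(n-1)+q-1$. Hence $\tilde S_{n,q}v\in\mathcal{K}_{(2q-1)(n-1)+q}((\gamma-A)^{-1},(\gamma-A)^{-q}v)$, exactly the asserted space.

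For the two estimates I would argue as follows. The bound $\|W_n\|\le C(\gamma,q,N)$ from Lemma~\ref{smoothapprox} together with the fact that the $h_k^q$ depend only on $q$ gives $\|\tilde S_{n,q}\|\le\sum_{k=q}^{2q-1}|h_k^q|\,\|W_n\|^{k}\le C(\gamma,q,N)$ immediately. For the approximation error I would split
\[
\tilde S_{n,q}v-v=(\tilde S_{n,q}-H_{n,q})v+(H_{n,q}v-v),
\]
bound the second summand by $C(q,N)n^{-q/2}\|A^qv\|$ using Lemma~\ref{smoothop}, and control the first through the telescoping identity $W_n^{\,k}-M_n^{\,k}=(W_n-M_n)\sum_{j=0}^{k-1}W_n^{\,j}M_n^{\,k-1-j}$ (the factors commute, being functions of resolvents of $A$). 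Since $\|M_n\|=\sqrt{n}\,\|(\sqrt{n}-A)^{-1}\|\le N$ and $\|W_n\|\le C(\gamma,q,N)$ are both bounded independently of $n$, while $\|W_n-M_n\|\le C(\gamma,q,N)n^{-q/2}$ by Lemma~\ref{smoothapprox} with $p=q$, this yields $\|\tilde S_{n,q}-H_{n,q}\|\le C(\gamma,q,N)n^{-q/2}$, hence $\|(\tilde S_{n,q}-H_{n,q})v\|\le C(\gamma,q,N)n^{-q/2}\|v\|$. Adding the two contributions gives the claimed bound $C(\gamma,q,N)n^{-q/2}(\|v\|+\|A^qv\|)$.

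Finally, the intertwining relation $A^l\tilde S_{n,q}v=\tilde S_{n,q}A^lv$ for $v\in\mathcal{D}(A^l)$ follows because $\tilde S_{n,q}$ is a polynomial in $(\gamma-A)^{-1}$, and $(\gamma-A)^{-1}$ maps $\mathcal{D}(A^l)$ into itself with $A^l(\gamma-A)^{-1}w=(\gamma-A)^{-1}A^lw$ for $w\in\mathcal{D}(A^l)$; iterating over the factors of $\tilde S_{n,q}$ (equivalently, invoking the homomorphism property of Lemma~\ref{homomorphy2}) gives the statement. I do not expect a genuine obstacle here; the only points that need care are the degree bookkeeping that makes the top power $(\gamma-A)^{-kn}v$ with $k\le 2q-1$ fit into the Krylov subspace of index exactly $(2q-1)(n-1)+q$, and verifying that every generic constant stays independent of $n$ — which it does, since $p=q$ is fixed and all of the $n$-dependence is confined to the factor $n^{-q/2}$ coming from Lemmas~\ref{smoothop} and~\ref{smoothapprox}.
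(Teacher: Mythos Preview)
Your proposal is correct and follows essentially the same route as the paper: define $\tilde S_{n,q}=\sum_{k=q}^{2q-1}h_k^qW_n^{\,k}$, split $\tilde S_{n,q}v-v$ as $(\tilde S_{n,q}-H_{n,q})v+(H_{n,q}v-v)$, control the first piece via the telescoping identity for $W_n^{\,k}-M_n^{\,k}$ together with Lemma~\ref{smoothapprox} (with $p=q$), and the second via Lemma~\ref{smoothop}. Your degree bookkeeping for the Krylov-space membership is in fact more explicit than the paper's, which simply declares the inclusion ``clear'' from the form of $W_n$.
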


\begin{proof}
We choose the coefficients $h_k^q$ as in Lemma~\ref{smoothop} and set
\[
\tilde{S}_{n,q} = \sum_{k=q}^{2q-1} h_k^q W_n^k\,,
\quad h_k^q = \binom{2q-1}{k} \binom{k-1}{k-q} (-1)^{k-q}\,,
\]
where the $W_n^k$ are the $k$-th powers of the operators $W_n$ in Lemma~\ref{smoothapprox}. Since these operators $W_n$ are uniformly bounded according to Lemma~\ref{smoothapprox}, it is clear that the operators $\tilde{S}_{n,q}$ are uniformly bounded with respect to $n$ as well. From the definition of $W_n$, we conclude
\[
A^l W_n v = W_n A^l v
\quad \mbox{for all} \quad v \in \mathcal{D}(A^l)\,,
\]
and hence the same holds true for $\tilde{S}_{n,q}$. For fixed $q$, let now $v \in \mathcal{D}(A^q)$ be arbitrary. It follows
\[
\|\tilde{S}_{n,q}v-v\| 
\leq \|\tilde{S}_{n,q}-H_{n,q}\| \|v\|+ \|H_{n,q}v-v\| 
\leq \|\tilde{S}_{n,q}-H_{n,q}\| \|v\| + \frac{C(q,N)}{n^{\frac{q}{2}}} \|A^qv\|
\]
with Lemma~\ref{smoothop}. Since we can write
\[
W_n^k - \left( \frac{\sqrt{n}}{\sqrt{n}-A} \right)^k=\sum_{l=0}^{k-1} W_n^{k-l-1}\left( W_n- \frac{\sqrt{n}}{\sqrt{n}-A} \right)
\left( \frac{\sqrt{n}}{\sqrt{n}-A}\right)^l,
\]
and since $W_n$ and $\sqrt{n}(\sqrt{n}-A)^{-1}$ are bounded independently of $n$, we obtain with Lemma~\ref{smoothapprox} 
\[
\biggl\|W_n^k-\left( \frac{\sqrt{n}}{\sqrt{n}-A}\right)^k\biggr\| 
\leq \frac{C(\gamma,q,N)}{n^{\frac{q}{2}}} \quad \mbox{for}\quad k=q,\ldots,2q-1\,,
\]
and therefore
\[
\|\tilde{S}_{n,q}-H_{n,q}\| \leq \frac{C(\gamma,q,N)}{n^{\frac{q}{2}}}\,.
\]
Altogether, the first bound in our lemma is proved. Due to the special form of the operators $W_n$, the statement
$\tilde{S}_{n,q}v \in \mathcal{K}_{(2q-1)n-(q-1)}((\gamma-A)^{-1}, (\gamma-A)^{-q}v)$
is clear.
\end{proof}

\begin{lemma} \label{smoothfinal} There exist operators $S_{n,q}$ with 
\[
S_{n,q}v \in \mathcal{K}_{\lfloor\frac{n}{2}\rfloor}((\gamma-A)^{-1}, (\gamma-A)^{-q}v)
\]
such that for all $v \in \mathcal{D}(A^q)$ and $n \geq 2q$, we have
\[
\|S_{n,q} v - v\| \leq \frac{C(\gamma,q,N)}{n^{\frac{q}{2}}}\left( \|v\|+\|A^qv\|\right)\,,
\quad \|S_{n,q}\| \leq C(\gamma,q,N)
\]
with $n$-independent constants and
\[
A^l S_{n,q} v = S_{n,q} A^l v 
\quad \mbox{for} \quad v \in \mathcal{D}(A^l).
\]
\end{lemma}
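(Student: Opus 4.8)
The plan is to deduce Lemma~\ref{smoothfinal} from Lemma~\ref{smoothprep} by a mere reindexing: all the analytic work has already been done there, and the point here is only to trade the $q$-dependent blow-up $(2q-1)(n-1)+q$ of the Krylov dimension in Lemma~\ref{smoothprep} against the clean dimension $\lfloor n/2\rfloor$, at the price of a larger generic constant.

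Concretely, I would let $m=m(n,q)$ be the largest integer with $(2q-1)(m-1)+q\le\lfloor n/2\rfloor$. For $n\ge 2q$ one has $\lfloor n/2\rfloor\ge q$, so such an $m$ exists and $m\ge 1$; this is exactly where the hypothesis $n\ge 2q$ enters. Set $S_{n,q}:=\tilde S_{m(n,q),q}$. Then Lemma~\ref{smoothprep} gives at once $S_{n,q}v=\tilde S_{m,q}v\in\mathcal{K}_{(2q-1)(m-1)+q}((\gamma-A)^{-1},(\gamma-A)^{-q}v)\subseteq\mathcal{K}_{\lfloor n/2\rfloor}((\gamma-A)^{-1},(\gamma-A)^{-q}v)$, the uniform bound $\|S_{n,q}\|=\|\tilde S_{m,q}\|\le C(\gamma,q,N)$, and the commutation relation $A^lS_{n,q}v=S_{n,q}A^lv$ for $v\in\mathcal{D}(A^l)$.

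It remains to convert the estimate $\|S_{n,q}v-v\|\le C(\gamma,q,N)\,m^{-q/2}(\|v\|+\|A^qv\|)$ from Lemma~\ref{smoothprep} into one with $n^{-q/2}$, i.e.\ to show $m^{-q/2}\le C(q)\,n^{-q/2}$ for all $n\ge 2q$. By maximality of $m$ together with $\lfloor n/2\rfloor\ge(n-1)/2$ one gets $m\ge(\lfloor n/2\rfloor-q)/(2q-1)$, hence $m\ge(n-2)/\big(4(2q-1)\big)\ge n/\big(8(2q-1)\big)$ once $n\ge 4q$; for the finitely many remaining indices $2q\le n<4q$ one simply uses $m\ge1$ and $n<4q$, so that $m^{-q/2}\le1\le(4q)^{q/2}n^{-q/2}$. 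In either regime $m^{-q/2}\le C(q)\,n^{-q/2}$, and absorbing $C(q)$ into the constant of Lemma~\ref{smoothprep} yields the claimed bound.

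I do not expect a real obstacle here: the statement is essentially a corollary of Lemma~\ref{smoothprep}, and the whole proof is index bookkeeping. The only steps requiring attention are checking $m\ge 1$ (handled by the hypothesis $n\ge 2q$) and the small-index range $2q\le n<4q$, where the linear-in-$n$ lower bound on $m$ degenerates and must be replaced by the trivial bound $m\ge1$.
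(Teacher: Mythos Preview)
Your proposal is correct and follows essentially the same approach as the paper: the paper's proof is a one-liner that sets $S_{n,q}:=\tilde{S}_{\lfloor\frac{n-2q}{2(2q-1)}\rfloor+1,\,q}$, which is precisely (up to a possible off-by-one that is harmless) the explicit formula for your maximal $m(n,q)$. Your careful bookkeeping for the conversion $m^{-q/2}\le C(q)\,n^{-q/2}$ and the small-index range $2q\le n<4q$ just makes explicit what the paper leaves implicit.
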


\begin{proof}
This is an immediate result of Lemma~\ref{smoothprep} by setting
$S_{n,q}:= \tilde{S}_{\lfloor\frac{n-2q}{2(2q-1)}\rfloor+1,q}$.
\end{proof}

Lemma~\ref{smoothfinal} basically says that one can approximate a vector $v \in \mathcal{D}(A^q)$ in about half of the Krylov subspace $\mathcal{K}_n((\gamma-A)^{-1},(\gamma-A)^{-q}v)$ and retaining a convergence according to the smoothness of $v$. The operator $S_{n,q}$ might therefore be seen as a smoothing projecting operator because of $S_{n,q}v \in \mathcal{D}(A^{2q})$ for $v\in \mathcal{D}(A^q)$.

%%%%%%%%%%%%%%%%%%%%%%%%%%%%%%%%%%%%%%%%%%%%%%%%%%%%%%%%%%%%%%%%%%%%%%%
\section{Approximation properties of the semigroup in Banach spaces} \label{sec:approxBanach}
%%%%%%%%%%%%%%%%%%%%%%%%%%%%%%%%%%%%%%%%%%%%%%%%%%%%%%%%%%%%%%%%%%%%%%%

We study in this section the best approximation of $e^Av$ in the resolvent Krylov subspace $\mathcal{K}_n((\gamma-A)^{-1},v)$ dependent on the smoothness of the vector $v$. 
\begin{theorem} \label{theo_bestapprox_expA}
Let $A$ be the generator of a bounded semigroup with bound $N$. Then we have 
\[
\inf_{r \in \frac{\mathcal{P}_{n-1}}{(\gamma-\cdot)^{n-1}}} \|e^{A}v-r(A)v \| 
\leq \frac{C(\gamma,q,N)}{n^{\frac{q}{2}}} (\|v\|+\|A^qv\|)
\quad \mathrm{for~all} \quad v \in \mathcal{D}(A^q)\,,
\]
where $C(\gamma,q,N)$ does not depend on $n$.
\end{theorem}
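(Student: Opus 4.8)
The plan is to reduce the approximation of $e^A v$ in the resolvent Krylov subspace to two ingredients already established: the smoothing operator $S_{n,q}$ of Lemma~\ref{smoothfinal}, which produces a vector $S_{n,q}v$ in a Krylov subspace of dimension roughly $n/2$ and which is close to $v$ with an error controlled by $n^{-q/2}(\|v\|+\|A^qv\|)$, and the resolvent-subspace best-approximation operators $W_n$ of Lemma~\ref{smoothapprox}. The key observation is that applying the semigroup to the smoothed vector, $e^A S_{n,q}v$, lands (after a further rational approximation of the semigroup itself) in a slightly larger resolvent Krylov subspace of still $\order(n)$ dimension, so that a suitable splitting of the error between ``smoothing $v$'' and ``approximating $e^{A}$ on the smoothed vector'' gives the claimed rate.

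Concretely, I would write $e^A v - r(A)v = (e^A v - e^A S_{n,q}v) + (e^A S_{n,q}v - r(A)v)$. The first term is bounded using $\|e^A\|\le N$ together with Lemma~\ref{smoothfinal}, giving $N\cdot C(\gamma,q,N)n^{-q/2}(\|v\|+\|A^qv\|)$. For the second term, I would exploit that $S_{n,q}v\in\mathcal{D}(A^{2q})$ and in fact $S_{n,q}v$ lies in $\mathcal{K}_{\lfloor n/2\rfloor}((\gamma-A)^{-1},(\gamma-A)^{-q}v)$, so $e^A S_{n,q}v$ can be approximated by $e^A$ acting on that subspace; one then needs a rational-in-resolvent approximation of the semigroup $e^A$ that converges at rate $n^{-q/2}$ when applied to vectors in $\mathcal{D}(A^{2q})$ with norm controlled by $\|A^{2q}(\cdot)\|$, or more simply a polynomial-in-$(\gamma-A)^{-1}$ approximation of $e^A$ whose error, measured on $\mathcal{D}(A^{q})$-type data, decays like $n^{-q/2}$. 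Such an estimate follows from the same Laguerre-series / Jackson-type machinery used in the proof of Lemma~\ref{smoothapprox} (via \cite{JoNguy89} and Theorem~4.1 in \cite{ratkryphi11}): one represents $e^A = \int_0^\infty e^{sA}\mathcal{F}(e^z)_{(0)}(s)\,ds$ through the functional calculus, approximates the kernel by Laguerre polynomials, and reads off that $E_m$ of the relevant weighted function decays at the required rate, with the smoothness of $v$ entering through Lemma~\ref{btlemma4} which lets one pull out a factor $A^q$ (or $A^{2q}$) from the error.

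The composition $r(A) = (\text{polynomial in }(\gamma-A)^{-1})$ applied to $S_{n,q}v$, where $S_{n,q}v$ is itself a polynomial of degree $\order(n)$ in $(\gamma-A)^{-1}$ applied to $v$, yields a rational function $r\in\mathcal{P}_{n-1}/(\gamma-\cdot)^{n-1}$ times $v$, provided we keep both degrees proportional to $n$ and absorb constants; this is exactly why Lemma~\ref{smoothfinal} was stated with the subspace dimension $\lfloor n/2\rfloor$, leaving the other half of the budget for the semigroup approximation. I would then collect the two error contributions, both of order $C(\gamma,q,N)n^{-q/2}(\|v\|+\|A^qv\|)$, and conclude.

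The main obstacle I anticipate is the bookkeeping of the Krylov subspace dimensions and the verification that the composed approximation really is a rational function of the required form and degree — i.e., that applying an $\order(n)$-degree polynomial in the resolvent to $S_{n,q}v$ (which already involves $(\gamma-A)^{-q}v$ and powers of $W_n$) stays inside $\mathcal{K}_n((\gamma-A)^{-1},v)$ up to a fixed constant factor in the dimension. A secondary technical point is ensuring the semigroup-approximation estimate is stated for the correct weighted norm so that the factor extracted by Lemma~\ref{btlemma4} is $\|A^{q}(S_{n,q}v)\|$, which by the commutation property $A^qS_{n,q}v = S_{n,q}A^qv$ and the uniform boundedness of $S_{n,q}$ is controlled by $\|A^qv\|$; combining this with the norm bound $\|S_{n,q}\|\le C$ for the lower-order term gives the $(\|v\|+\|A^qv\|)$ structure. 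Once these degree/norm accountings are in place, the rate is immediate from the cited Jackson-type results.
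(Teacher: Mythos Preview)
Your proposal is correct and follows essentially the same route as the paper: split $e^Av - r(A)v$ through $\tilde v_n = S_{n,q}v$, bound the first piece by Lemma~\ref{smoothfinal}, and for the second piece exploit the smoothness of $\tilde v_n$ via Lemma~\ref{btlemma4} together with a resolvent-polynomial approximation whose rate is $n^{-q/2}$, with $\|A^q\tilde v_n\| = \|S_{n,q}A^qv\| \le C\|A^qv\|$. The paper makes the second step concrete by writing $e^z-\sum_{j=0}^{q-1}z^j/j! = z^q\varphi_q(z)$ and invoking Theorem~4.2 in \cite{ratkryphi11} for the approximation of $\varphi_q(A)$ in $\mathcal{R}_{\lfloor n/2\rfloor -q}(\gamma,A)$; the mixed approximant $\sum_{j=0}^{q-1}\tfrac{1}{j!}A^j\tilde v_n + \sum_k a_k(\gamma-A)^{-k}A^q\tilde v_n$ then lands exactly in $\mathcal{K}_n((\gamma-A)^{-1},v)$, resolving the dimension bookkeeping you flagged.
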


\begin{proof} With the smoothing operator $\tilde{v}_n:=S_{n,q}v$, $n \geq 2q$, from Lemma \ref{smoothfinal}, it follows
\begin{align*}
\| e^{A}v - r(A)v \| 
&\leq \|e^{A} (S_{n,q}v-v)\| + \|e^{A}\tilde{v}_n-r(A)v\| \\[1ex]
&\leq \frac{C(\gamma,q,N)}{n^\frac{q}{2}} (\|v\|+\|A^qv\|) + \|e^{A}\tilde{v}_n-r(A)v\|
\end{align*}
and thus
\[
\inf_{r \in \frac{\mathcal{P}_{n-1}}{(\gamma-\cdot)^{n-1}}} \|e^{A}v-r(A)v\| 
\leq \frac{C(\gamma,q,N)}{n^\frac{q}{2}} (\|v\| + \|A^qv\|) 
+ \inf_{r \in \frac{\mathcal{P}_{n-1}}{(\gamma-\cdot)^{n-1}}} \|e^{A}\tilde{v}_n-r(A)v\|\,.
\]
It remains to bound the second term on the right. Since $\tilde{v}_n \in \mathcal{K}_{\lfloor\frac{n}{2}\rfloor}((\gamma-A)^{-1}, (\gamma-A)^{-q}v)$,
we have for arbitrary coefficients $b_0,\ldots,b_{q-1}$, $a_1,\ldots,a_{\lfloor\frac{n}{2}\rfloor-q} \in \C$ that
\[
r(A)v = b_0 \tilde{v}_n + b_1 A \tilde{v}_n + \cdots + b_{q-1} A^{q-1} \tilde{v}_n 
+ \sum_{k=1}^{\lfloor\frac{n}{2}\rfloor-q} a_k \frac{1}{(\gamma-A)^k} A^q \tilde{v}_n
\in \mathcal{K}_n((\gamma-A)^{-1},v)\,.
\]
In the following, we use an approximation result given in \cite{ratkryphi11} for the so-called $\varphi$-functions
\begin{equation} \label{eq_phi}
\varphi_k(z) = \frac{1}{z^k} \biggl( e^z - \sum_{j=0}^{k-1} \frac{z^j}{j!} \biggr)\,,
\quad k = 1, 2, \ldots\,,
\end{equation}
that belong to $\widetilde{\mathcal{M}}$ and fit in the functional calculus introduced above. We leave $a_1,\ldots,a_{\lfloor\frac{n}{2}\rfloor-q}$ arbitrary and choose $b_j = 1/j!$, $j = 0, \ldots, q-1$, according to the sum in \eqref{eq_phi} to obtain
\begin{align*}
e^A \tilde{v}_n 
&- 
( b_0 + b_1 A \tilde{v}_n + \cdots + b_{q-1} A^{q-1} \tilde{v}_n )
- \sum_{k=1}^{\lfloor\frac{n}{2}\rfloor-q} a_k \frac{1}{(\gamma-A)^k} A^q \tilde{v}_n \\[-1ex]
&= 
\varphi_q(A) A^q \tilde{v}_n 
- \sum_{k=1}^{\lfloor\frac{n}{2}\rfloor-q} a_k \frac{1}{(\gamma-A)^k} A^q \tilde{v}_n\,.
\end{align*}
Now we have for $n > 2q+1$ the estimate
\begin{align*}
\biggl\| 
\varphi_q(A) A^q \tilde{v}_n
- \sum_{k=1}^{\lfloor\frac{n}{2}\rfloor-q} a_k \frac{1}{(\gamma-A)^k} A^q \tilde{v}_n \biggr\|
&\leq 
\biggl\| 
\varphi_q(A) - \sum_{k=1}^{\lfloor\frac{n}{2}\rfloor-q} a_k \frac{1}{(\gamma-A)^k}
\biggr\|
\| A^q \tilde{v}_n \| \\[1ex]
&\leq 
\frac{C(\gamma,q)}{\left(\lfloor\frac{n}{2}\rfloor-q\right)^{\frac{q}{2}}}
 \| A^q \tilde{v}_n \|
\leq 
\frac{C(\gamma,q,N)}{n^{\frac{q}{2}}} \| A^q \tilde{v}_n \|\,,
\end{align*}
where we used Theorem 4.2 in \cite{ratkryphi11} for the second inequality. Due to
\[
A^q \tilde{v}_n = A^q S_{n,q} v = S_{n,q} A^q v 
\quad \text{and} \quad
\|A^q\tilde{v}_n\| = \|S_{n,q}A^qv\| \leq C(\gamma,q,N) \|A^qv\|\,,
\]
we find for $ n > 2q+1$ that
\[
\inf_{r \in \frac{\mathcal{P}_{n-1}}{(\gamma-\cdot)^{n-1}}} 
\|e^{A} \tilde{v}_n - r(A)v\| 
\leq \frac{C(\gamma,q,N)}{n^{\frac{q}{2}}} \|A^qv\| 
\leq \frac{C(\gamma,q,N)}{n^{\frac{q}{2}}} (\|v\|+\|A^qv\|)\,. 
\]
Since this bound holds true up to finitely many numbers, adjusting the constant renders the bound true for all $n \in \N$ and our theorem is proved. 
\end{proof}

\begin{remark}
A discussion on a suitable choice of the shift $\gamma$ in the resolvents of the rational approximations to the $\varphi$-functions can be found in \cite{GG13}. 
\end{remark}

%%%%%%%%%%%%%%%%%%%%%%%%%%%%%%%%%%%%%%%%%%%%%%%%%%%%%%%%%%%%%%%%%%%
\section{Smoothness-detection by the resolvent Krylov subspace method} \label{sec:detect}
%%%%%%%%%%%%%%%%%%%%%%%%%%%%%%%%%%%%%%%%%%%%%%%%%%%%%%%%%%%%%%%%%%%

Let $A$ be a linear operator on a Hilbert space $H$ with $\mathrm{Range}(\lambda-A)=H$ for some $\lambda$ with $\mathrm{Re}\,\lambda >0$ and
\[
\mbox{Re}\,(Ax,x) \leq 0\
\quad \text{for all} \quad x \in \mathcal{D}(A)\,.
\]
By the Lumer-Phillips theorem for Hilbert spaces (e.g. Corollary 4.3.11 on page 186 in \cite{Mikl98}), $A$ is the generator of a contraction $C_0$-semigroup with
\[
\|e^{tA}\| \leq 1
\quad \text{for all} \quad t \geq 0\,,
\]
i.e. a bounded semigroup with $N=1$.
We designate by $P_n$ the orthogonal projection onto $\mathcal{K}_n((\gamma-A)^{-1},v)$ and by $\An=P_nAP_n$ the restriction of $A$ to this subspace. For simplicity, we assume in this section that $v \in \mathcal{D}(A)$, and therefore $\mathcal{K}_n((\gamma-A)^{-1},v) \subseteq \mathcal{D}(A)$. Then $\An$ also satisfies
\[
\mbox{Re}\,(\An x,x) \leq 0 
\quad \text{for all} \quad x \in \mathcal{K}_n((\gamma-A)^{-1},v)
\]
as well as $\mathrm{Range}(\lambda-\An)=H$ for some $\lambda$ with $\mathrm{Re}\,\lambda >0$, and therefore generates a contraction semigroup. Hence, our functional calculus can be applied to functions of $A$ as well as $\An$. 

The next theorem gives an upper bound for the approximation of the operator $\varphi$-function of $A$ times $v \in \mathcal{D}(A^q)$ in the resolvent Krylov subspace $\mathcal{K}_n((\gamma-A)^{-1},v)$. More precisely, we approximate $\varphi_j(A)v$ by $\varphi_j(\An)v$, where $\varphi_j$ is for $j = 1, 2, \ldots$ the $\varphi$-function defined above in \eqref{eq_phi} and $\varphi_0$ denotes the exponential function.

\begin{theorem} \label{theo_detect} Let $\An=P_nAP_n$, where $P_n$ is the orthogonal projection onto the resolvent Krylov subspace 
$\mathcal{K}_n((\gamma-A)^{-1},v)$. For $v \in \mathcal{D}(A^q)$ and $j \geq 0$, we have
\[
\|\varphi_j(A)v-\varphi_j(\An)v\| 
\leq \frac{C(\gamma,q,j)}{n^{\frac{j+q}{2}}} (\|v\|+\|A^qv\|)\,,
\]
where $C(\gamma,q,j)$ does not depend on $n$.
\end{theorem}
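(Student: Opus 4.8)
The plan is to follow the pattern of the proof of Theorem~\ref{theo_bestapprox_expA}, but now extract the Galerkin approximation $\varphi_j(\An)v$ and gain the extra factor $n^{-j/2}$ from the rational approximability of the $\varphi$-functions. Everything rests on the iterated $\varphi$-recursion
\[
\varphi_j(z)=\sum_{i=0}^{q-1}\frac{z^i}{(i+j)!}+z^q\,\varphi_{j+q}(z),
\]
which, via Lemma~\ref{btlemma4}, rewrites $\varphi_j$ applied to a vector in $\mathcal{D}(A^q)$ as $\varphi_{j+q}$ applied to $A^q$ of that vector, and $\varphi_{j+q}$ is approximated by elements of $\mathcal{R}_l(\gamma,A)$ at the rate $l^{-(j+q)/2}$ (Theorem~4.2 in \cite{ratkryphi11}). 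Throughout, $v\in\mathcal{D}(A)$ gives $P_nv=v$, and $\An$ generates a contraction semigroup, so the functional calculus and Lemma~\ref{btlemma4} apply to $\An$ as well.

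First I would fix $\tilde v_n:=S_{n,q}v$ from Lemma~\ref{smoothfinal}, which satisfies $\tilde v_n\in\mathcal{K}_{\lfloor n/2\rfloor}((\gamma-A)^{-1},(\gamma-A)^{-q}v)\subseteq\mathcal{K}_n((\gamma-A)^{-1},v)$, $\tilde v_n\in\mathcal{D}(A^{2q})$, $A^q\tilde v_n=S_{n,q}A^qv$ with $\|A^q\tilde v_n\|\le C(\gamma,q)\|A^qv\|$, and $\|v-\tilde v_n\|\le C(\gamma,q)n^{-q/2}(\|v\|+\|A^qv\|)$, and split
\[
\varphi_j(A)v-\varphi_j(\An)v=\bigl(\varphi_j(A)\tilde v_n-\varphi_j(\An)\tilde v_n\bigr)+\bigl(\varphi_j(A)(v-\tilde v_n)-\varphi_j(\An)(v-\tilde v_n)\bigr).
\]
(The finitely many $n$ too small for Lemma~\ref{smoothfinal} are absorbed into the constant using $\|\varphi_j(A)v-\varphi_j(\An)v\|\le 2\|\varphi_j(A)\|\,\|v\|$.) A short computation gives the exactness property $(\gamma-\An)^{-k}u=(\gamma-A)^{-k}u$ for all $u\in\mathcal{K}_{n-k}$ (since $P_nu=u$ there and $\An=P_nAP_n$), hence $R(\An)u=R(A)u$ whenever $R\in\mathcal{R}_l(\gamma,A)$ and $u\in\mathcal{K}_{n-l}$.

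For the second bracket, $v-\tilde v_n\in\mathcal{K}_{\lfloor n/2\rfloor+q}$, so taking $R$ to be a best rational approximation of $\varphi_j$ in $\mathcal{R}_{\lceil n/2\rceil-q}(\gamma,A)$ (with $\|(\varphi_j-R)(B)\|\le C(\gamma,j)n^{-j/2}$ uniformly over contraction generators $B$) and writing $\varphi_j(A)(v-\tilde v_n)-\varphi_j(\An)(v-\tilde v_n)=(\varphi_j-R)(A)(v-\tilde v_n)-(\varphi_j-R)(\An)(v-\tilde v_n)$ bounds it by $C n^{-j/2}\|v-\tilde v_n\|\le C(\gamma,q,j)n^{-(j+q)/2}(\|v\|+\|A^qv\|)$; for $j=0$ the trivial bound $\|e^A\|\le 1$ already gives $n^{-q/2}$. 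For the first bracket, I would first check that $A^i\tilde v_n\in\mathcal{K}_{\lfloor n/2\rfloor+q}((\gamma-A)^{-1},v)\subseteq\mathcal{K}_n$ for $i=0,\dots,q$ (from $A(\gamma-A)^{-k}=\gamma(\gamma-A)^{-k}-(\gamma-A)^{-(k-1)}$ and the location of $\tilde v_n$), which forces $\An^i\tilde v_n=A^i\tilde v_n$ for these $i$; consequently the polynomial parts in the $\varphi$-recursion cancel and
\[
\varphi_j(A)\tilde v_n-\varphi_j(\An)\tilde v_n=\varphi_{j+q}(A)w-\varphi_{j+q}(\An)w,\qquad w:=A^q\tilde v_n\in\mathcal{K}_{\lfloor n/2\rfloor+q}.
\]
Inserting a best rational approximation of $\varphi_{j+q}$ in $\mathcal{R}_{\lceil n/2\rceil-q}(\gamma,A)$ and again using the exactness property bounds the right-hand side by $C n^{-(j+q)/2}\|w\|\le C(\gamma,q,j)n^{-(j+q)/2}\|A^qv\|$. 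Summing the two brackets and readjusting the constant proves the theorem.

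The main obstacle is the first bracket, and within it the compatibility of the smoothing operator $S_{n,q}$ with the compression: one must make sure that $S_{n,q}v$ sits in the correct rational Krylov subspace so that \emph{all} of the polynomial remainder terms $A^i\tilde v_n$ ($i<q$) are reproduced exactly by $\An$ and the only surviving approximation error is that of $\varphi_{j+q}$ — this is precisely what upgrades the exponent from $q/2$ to $(j+q)/2$. A subordinate technical point is the functional-calculus bookkeeping ($z^q\varphi_{j+q}(z)\in\mathcal{M}$ and the use of Lemma~\ref{btlemma4} for both $A$ and the bounded operator $\An$), which goes through exactly as in Section~\ref{sec:approxBanach}.
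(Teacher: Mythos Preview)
Your proposal is correct and follows essentially the same approach as the paper's proof: the same smoothing $\tilde v_n=S_{n,q}v$, the same two-term splitting, the same exactness-plus-rational-approximation argument for the $(v-\tilde v_n)$-term, and the same use of the $\varphi$-recursion together with Theorem~4.2 of \cite{ratkryphi11} for the $\tilde v_n$-term. The only organizational difference is that you first verify $\An^i\tilde v_n=A^i\tilde v_n$ for $i\le q$ and then reduce directly to $\varphi_{j+q}(A)w-\varphi_{j+q}(\An)w$, whereas the paper packages the polynomial and rational parts into a single function $r_2$ and invokes exactness for $r_2$; these are the same computation.
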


\begin{proof} Let $v \in \mathcal{D}(A^q)$ and $k \geq 0$ be arbitrary. To bound $\|\varphi_j(A)v-\varphi_j(\An)v\|$, we use as a first step our smoothing operator $\tilde{v}_n:=S_{n,q}v$, $n > 2q+1$ w.l.o.g., and turn this difference into
\begin{equation} \label{split_difference}
\| \varphi_j(A)v - \varphi_j(\An)v \| 
\leq \| \varphi_j(A)\tilde{v}_n - \varphi_j(\An) \tilde{v}_n \|
+ \| \big[ \varphi_j(A) - \varphi_j(\An)\big] (\tilde{v}_n-v) \|.
\end{equation}
In the following, both terms will be bounded separately. We start with the second term. Since
\[
\tilde{v}_n-v \in \mathcal{K}_{\lfloor\frac{n}{2}\rfloor+q}((\gamma-A)^{-1},v)\,,
\]
and since the resolvent Krylov subspace approximation is exact for every rational function $r(z) = p_{n-1}(z)/(\gamma-z)^{n-1}$, $p_{n-1}(z) \in \mathcal{P}_{n-1}$ (see \cite{Beckermann_Reichel09}), we have for every function $r_1(z)$ of the form
\[
r_1(z) = \sum_{k=1}^{\lfloor\frac{n}{2}\rfloor-q} a_k \frac{z^{k-1}}{(\gamma-z)^k}
\]
that
\[
r_1(A) (\tilde{v}_n-v)
= \sum_{k=1}^{\lfloor\frac{n}{2}\rfloor-q} 
a_k \frac{A^{k-1}}{(\gamma-A)^k} (\tilde{v}_n-v)
= \sum_{k=1}^{\lfloor\frac{n}{2}\rfloor-q} 
a_k \frac{\An^{k-1}}{(\gamma-\An)^k} (\tilde{v}_n-v)
= r_1(\An) (\tilde{v}_n-v)\,.
\]
Choosing $r_1$ as the best approximation to $\varphi_j$ in the sense of our functional calculus, we can estimate
\begin{align*}
\left\| \big[ \varphi_j(A) - \varphi_j(\An) \big] (\tilde{v}_n-v) \right\| 
& \leq 
\big( \|\varphi_j(A) - r_1(A)\| + \|\varphi_j(\An) - r_1(\An)\| \big) \| \tilde{v}_n-v \|  \\
%\left\| \big[ \varphi_j(A) - r_1(A) \big] (\tilde{v}_n-v) \right\|  \\[1ex]
%& \hspace*{0.4cm} + 
%\left\| \big[ \varphi_j(A_n) - r_1(A_n) \big] (\tilde{v}_n-v) \right\|  \\
& \leq \frac{C(\gamma,j)}{\left(\lfloor\frac{n}{2}\rfloor-q\right)^{\frac{j}{2}}}
\| \tilde{v}_n-v \| 
\leq 
\frac{C(\gamma,q,j)}{n^{\frac{j}{2}}} \| \tilde{v}_n - v \|\,,
\end{align*}
where we used Theorem~4.2 in \cite{ratkryphi11} for the second inequality.
Note that for the case $j = 0$ the terms $\|e^A-r_1(A)\|$ and $\|e^{\An}-r_1(\An)\|$ are just bounded by generic constants $C(\gamma,q)$. With Lemma \ref{smoothfinal}, we finally obtain
\begin{equation} \label{maintheo:eq1}
\left\| \big[ \varphi_j(A) - \varphi_j(\An) \big] (\tilde{v}_n-v) \right\| 
\leq \frac{C(\gamma,q,j)}{n^{\frac{j}{2}}} \| S_{n,q}v-v \|
\leq \frac{C(\gamma,q,j)}{n^{\frac{j+q}{2}}} (\|v\|+\|A^qv\|)\,.
\end{equation}
It remains to bound the first term on the right-hand side of \eqref{split_difference}. For this purpose, we first remark that we have
\[
b_0 \tilde{v}_n + b_1 A \tilde{v}_n + \cdots + b_{q-1} A^{q-1} \tilde{v}_n
+ \sum_{k=1}^{\lfloor\frac{n}{2}\rfloor-q} 
a_k \frac{1}{(\gamma-A)^k} A^q \tilde{v}_n \in \mathcal{K}_n((\gamma-A)^{-1},v)
\]
due to the special choice of our smoothing operator $\tilde{v}_n$. We define a second function $r_2(z)$ by
\[
r_2(z) 
= b_0 + b_1 z + \cdots + b_{q-1} z^{q-1} + r_1(z)
= b_0 + b_1 z + \cdots + b_{q-1} z^{q-1} +
\sum_{k=1}^{\lfloor\frac{n}{2}\rfloor-q} a_k \frac{z^{k-1}}{(\gamma-z)^k}\,.
\]
Then the exactness property of the resolvent Krylov subspace approximation yields
for an arbitrary choice of the coefficients $b_k$ and $a_k$ that
%\begin{eqnarray*}
%r(A)\tilde{v}_n &=&
%b_0\tilde{v}_n+b_1A\tilde{v}_n+\cdots+ b_{q-1}A^{q-1}\tilde{v}_n+
%\sum_{k=1}^{\left[ \frac{n}{2}\right]-q} a_k\frac{1}{(\gamma-A)^k}A^q\tilde{v}_n
%\\
%r(A_n)\tilde{v}_n &=&
%b_0\tilde{v}_n+b_1A_n\tilde{v}_n+\cdots+ b_{q-1}A_n^{q-1}\tilde{v}_n+
%\sum_{k=1}^{\left[ \frac{n}{2}\right]-q} a_k\frac{1}{(\gamma-A_n)^k}A_n^q\tilde{v}_n
%\end{eqnarray*}
%that
\[
r_2(A) \tilde{v}_n = r_2(\An) \tilde{v}_n
\]
and therefore 
\[
\| \varphi_j(A) \tilde{v}_n - \varphi_j(\An) \tilde{v}_n \| 
\leq \| \varphi_j(A) \tilde{v}_n - r_2(A)\tilde{v}_n\|
+ \| \varphi_j(\An) \tilde{v}_n - r_2(\An) \tilde{v}_n \|\,.
\]
We now choose $b_0,\ldots,b_{q-1}$ according to the Taylor series of $\varphi_j$ and obtain with 
\[
\varphi_j(z) - \sum_{k=0}^{q-1} \frac{z^k}{(j+k)!} = z^q \varphi_{j+q}(z)
\]
and Lemma~\ref{btlemma4} that 
\begin{align*}
\varphi_j(A) \tilde{v}_n - r_2(A) \tilde{v}_n 
%= \varphi_{k+q}(A)A^q\tilde{v}_n-\sum_{k=1}^{\left[ \frac{n}{2}\right]} %a_k\frac{1}{(\gamma-A)^k}A^q\tilde{v}_n 
&= 
\varphi_{j+q}(A) A^q \tilde{v}_n 
- \sum_{k=1}^{\lfloor\frac{n}{2}\rfloor-q} 
a_k \frac{A^{k-1}}{(\gamma-A)^k} A^q \tilde{v}_n\,, \\
\varphi_j(\An) \tilde{v}_n - r_2(\An) \tilde{v}_n 
&= 
%\varphi_{k+q}(A_n)A_n^q\tilde{v}_n-\sum_{k=1}^{\left[ \frac{n}{2}\right]} %a_k\frac{1}{(\gamma-A_n)^k}A_n^q\tilde{v}_n =
\varphi_{j+q}(\An) \An^q \tilde{v}_n
- \sum_{k=1}^{\lfloor\frac{n}{2}\rfloor-q} 
a_k \frac{\An^{k-1}}{(\gamma-\An)^k} \An^q \tilde{v}_n\,.
\end{align*}
Hence, we have with our functional calculus
\begin{align*}
\| \varphi_j(A) \tilde{v}_n - r_2(A) \tilde{v}_n \| 
& \leq 
\biggl\| \varphi_{j+q}(A) - \sum_{k=1}^{\lfloor\frac{n}{2}\rfloor-q} 
a_k \frac{A^{k-1}}{(\gamma-A)^k} \biggr\| 
\| A^q \tilde{v}_n \| \\
& \leq 
\| \mathcal{F}\varphi_{j+q,(0)}(s) - 
\sum_{k=1}^{\lfloor\frac{n}{2}\rfloor-q} 
a_k(-1)^{k-1} e^{-\gamma s} L_{k-1}(\gamma s) \|_1 
\| A^q \tilde{v}_n \|\,.
\end{align*}
Exactly the same calculation holds true for $\An$ and we conclude altogether
\begin{align*}
& \| \varphi_j(A) \tilde{v}_n - \varphi_j(\An) \tilde{v}_n \| \\ 
& \leq \| \mathcal{F}\varphi_{j+q,(0)}(s) 
- \sum_{k=1}^{\lfloor\frac{n}{2}\rfloor-q} 
a_k(-1)^{k-1} e^{-\gamma s} L_{k-1}(\gamma s) \|_1 
\big( \| A^q \tilde{v}_n \| + \| \An^q \tilde{v}_n \| \big)
\end{align*}
for all choices of the coefficients $a_k$. By the exactness property, we have $\An^q\tilde{v}_n = A^q\tilde{v}_n$ and hence 
\[
\| \varphi_j(A) \tilde{v}_n - \varphi_j(\An) \tilde{v}_n \| 
\leq 2 \| \mathcal{F}\varphi_{j+q,(0)}(s) 
- \sum_{k=1}^{\lfloor\frac{n}{2}\rfloor-q} 
a_k(-1)^{k-1} e^{-\gamma s} L_{k-1}(\gamma s) \|_1 
\| A^q \tilde{v}_n \|\,.
\]
By using Theorem 4.2. in \cite{ratkryphi11}, we thus obtain
\[
\| \varphi_j(A) \tilde{v}_n - \varphi_j(\An) \tilde{v}_n \| 
\leq
\frac{C(\gamma,j)}{\left(\lfloor\frac{n}{2}\rfloor-q\right)^{\frac{j+q}{2}}}
\| A^q \tilde{v}_n \|
\leq
\frac{C(\gamma,q,j)}{n^{\frac{j+q}{2}}} \| A^q \tilde{v}_n \|\,.
\]
Since 
\[
A^q \tilde{v}_n = A^q S_{n,q}v = S_{n,q}A^qv\,, 
\quad \| A^q \tilde{v}_n \| \leq C(\gamma,q) \| A^qv \|,
\]
we finally have
\begin{equation} \label{maintheo:eq2}
\| \varphi_j(A) \tilde{v}_n - \varphi_j(\An) \tilde{v}_n \|
\leq \frac{C(\gamma,j,q)}{n^{\frac{j+q}{2}}} \| A^q v \| 
\leq \frac{C(\gamma,j,q)}{n^{\frac{j+q}{2}}} \big( \|v\| + \|A^qv\| \big)\,.
\end{equation}
With \eqref{maintheo:eq1} and \eqref{maintheo:eq2}, our statement is proved.
\end{proof}

How $\varphi_k(\An)v$ can be represented with the help of quasi-matrices when an orthonormal basis of the approximation space $\mathcal{K}_n((\gamma-A)^{-1},v)$ is known, is described in \cite{GG13}.

In view of abstract evolution equations, we are usually interested in the approximation of $e^{\tau A}v$ or $\varphi_j(\tau A)v$ for $v \in \mathcal{D}(A^q)$ and $\tau > 0$. In this case, all the above results remain valid, we only have to replace $A$ by $\tau A$ anywhere. 

Moreover, all presented results for the semigroup $e^{\tau A}$ applied to some initial data $v~\in~\mathcal{D}(A^q)$ transfer to the discrete case, that is, to the approximation of $e^{\tau A_N}\Psi_0$, where $A_N$ is the discretization matrix of the differential operator $A$ and $\Psi_0$ is the discretized initial value $v$. Since the error bounds do not depend on $\|A\|$, we obtain a convergence rate that is independent of the spatial grid.

%%%%%%%%%%%%%%%%%%%%%%%%%%%%%%%%%%%%%%%%%%%%%%%%%
\section{Numerical experiments} \label{sec:NumExp}
%%%%%%%%%%%%%%%%%%%%%%%%%%%%%%%%%%%%%%%%%%%%%%%%% 

We illustrate our theoretical findings by a finite-difference discretization in Subsection~\ref{subsec:FD} as well as by a finite-element discretization of a wave equation in Subsection~\ref{subsec:FE}. Besides these illustrations, our theory provides an explanation for the behavior observed in several applications. For example, the grid-independent convergence of the rational Krylov method for the solution of Maxwell's Equations in photonic crystal modeling in \cite{Botchev16} is explained by our analysis. 

\subsection{Finite-difference discretization of the wave equation} \label{subsec:FD}

We consider the standard wave equation on the unit square $\Omega=(0,1)^2$ with homogeneous Dirichlet boundary conditions written as a system of first-order
\begin{equation} \label{wavestd_1storder}
y'(t) = \left[ \begin{array}{c}
           u(t) \\
           u'(t)
        \end{array} \right]'
      = \left[ \begin{array}{cc}
            0 & I \\
            \Delta & 0
        \end{array} \right]
        \left[ \begin{array}{c}
           u(t) \\
           u'(t)
        \end{array} \right]
      = A y(t)\,, \quad
y(0) = y_0 = \left[ \begin{array}{c}
                u_0 \\
                u_0'
             \end{array} \right].
\end{equation}
The operator $A$ on the Hilbert space  $\mathcal{D}(\sqrt{-\Delta})\times L^2(\Omega)$ equipped with the inner product  
\begin{equation} \label{skalprodhomwave}
  (v,w)=(\sqrt{-\Delta}\,v_1,\sqrt{-\Delta}\,w_1)_{L^2(\Omega)}+(v_2,w_2)_{L^2(\Omega)}\,,
\end{equation}
where $v=[v_1,v_2]^T$ and $w=[w_1,w_2]^T$, satisfies the properties of Section~\ref{sec:detect} and therefore generates a contraction semigroup (or more exactly, a $C_0$-group). The domain of the operator $A$ reads $\mathcal{D}(A)=\mathcal{D}(\Delta) \times \mathcal{D}(\sqrt{-\Delta})$ and, with standard Sobolev spaces, $\mathcal{D}(A)=(H^2(\Omega)\cap H_0^1(\Omega) ) \times H_0^1(\Omega)$. By $\| \cdot \|$, we denote the norm induced by the inner product defined in \eqref{skalprodhomwave}. %The initial values {\bf (ich glaube, dass man das nicht als $\|v^q\| v^q$ schreiben kann; gemeint ist ja eigentlich $v = \tilde{v}/\|\tilde{v}\|$)}
The $q$-dependent initial values $y_0^q$ with $u_0 = u'_0 = g_0^q / \|[g_0^q, g_0^q]^T\|$, where
\[
%\|v^q\| v^q=\left[
%  \begin{array}{c}
%   v_1^q \\[1ex]
%   v_2^q
%  \end{array}
%\right], \quad 
%v_i^q = g_0^q\,,~ i=1,2\, \quad \text{with} \quad
g_0^q : \left\{
  \begin{array}{rcl}
     (0,1)^2 & \rightarrow & \R\,, \\[1ex]
     (x,y) & \mapsto & x^{2q}(1-x)^{2q}y^{2q}(1-y)^{2q}\,,
  \end{array}
\right.
\]
%where the norm $\|\cdot\|$ is induced by the inner product \eqref{skalprodhomwave}, 
satisfy $y_0^q \in \mathcal{D}(A^q)$ and  $y_0^q \not\in \mathcal{D}(A^{q+1})$.

In order to illustrate and verify our theory numerically, we discretize the operator $A$ via finite differences on the grid $(ih,jh)$, $i,j=1,\ldots,d$ with $h=\frac{1}{d+1}$, which leads to a system of the type above with the matrix
\[
A_{N}=\left[
   \begin{array}{cc}
    0 & I_{N} \\
   \Delta_{N} & 0
   \end{array} 
\right] \in \R^{2N \times 2N}, \quad 
\Delta_{N}=\frac{1}{h^2}(T_d \otimes I_d + I_d \otimes T_d)\,, \quad 
T_d=\mbox{tridiag}(1,-2,1),
\]
where $\otimes$ is the Kronecker product. The matrix $\Delta_{N} \in \R^{N \times N}$ is the standard discretization with the five-point stencil for the Laplacian. We deal with the space $\R^{2N}$ equipped with the inner product  
\begin{equation} \label{skalprodfd}
  (\Psi,\Theta)_h=h^2(\sqrt{-\Delta_{N}}\,\Psi_1,\sqrt{-\Delta_{N}}\,\Theta_1)_2+h^2(\Psi_2,\Theta_2)_2 =h^2(-\Delta_{N}\,\Psi_1,\Theta_1)_2+h^2(\Psi_2,\Theta_2)_2\,,
\end{equation}
where $\Psi=[\Psi_1,\Psi_2]^T$ and $\Theta=[\Theta_1,\Theta_2]^T$, $\Psi_i, \Theta_i \in \R^{N}$, $i=1,2$, and $(\cdot, \cdot)_2$ designates the standard Euclidean inner product in $\R^{N}$. The matrix $A_{N}$ also satisfies the assumptions in Section~\ref{sec:detect}.
We define discretizations of the initial values $y_0^q$ by
\[
\Psi^q = \left[
  \begin{array}{c}
   \Psi_{1}^q \\[1ex]
   \Psi_{2}^q
  \end{array}
\right], \quad 
\Psi_i^q = \frac{1}{\|[g_0^q, g_0^q]^T\|} \big( g_0^q(ih,jh) \big)_{i,j=1}^d \in \R^{N}
\quad \text{for} \quad i =1, 2\,.
\]
%\[
%\|\Psi^q\|_h v^q_{2N^2}=\left[
%  \begin{array}{c}
%   \Psi_{1}^q \\[1ex]
%   \Psi_{2}^q
%  \end{array}
%\right], \quad 
%\Psi_{i}^q = G^q\,,~ i=1,2\,, \quad
%G^q=\big( g_0^q(ih,jh) \big)_{i,j=1}^d \in \R^{N}\,,
%\]
%{\bf (Hier wurde zuvor mit der Norm des diskretisierten Anfangswertes skaliert, das hab ich ge\"andert und die zum kontinuierlichen Problem geh\"orige Skalierung verwendet. Ist das in Ordnung?)}
For these initial values, we have 
\[
\|\Psi^q\|_h \leq C\|y_0^q\| 
\quad \text{and} \quad
\|A_{N}^q \Psi^q \|_h \leq C\|A^qy_0^q\|\,, 
\] 
where $\|\cdot\|_h$ is the norm induced by the inner product \eqref{skalprodfd} and $C$ is a generic constant that does not depend on $N$. Therefore, the error of the resolvent Krylov subspace approximation to the matrix exponential measured in the discrete norm is bounded independently of $N$ according to our Theorem~\ref{theo_detect} as
\[
 \|e^{\tau A_{N}}\Psi^q-e^{\tau \An}\Psi^q\|_h \leq \frac{C(\gamma,q)}{n^{\frac{q}{2}}}(\|y_0^q\|+\tau^q\|A^qy_0^q\|)
\]
with $\An = P_n A_N P_n$, where $P_n$ is the orthogonal projection onto $\mathcal{K}_n( (\gamma-\tau A_N)^{-1}, \Psi^q)$. 
Note that the right-hand side with the continuous values, does not depend on $N$.
This worst case sublinear convergence can be clearly observed in the numerical results in Figure~\ref{fig_fd_limit} with $\tau=0.5$, where the error in the discrete norm is shown over the dimension of the Krylov subspace for discretizations of $A$ with $d=15,31,127,255,511,1023,2047$, leading to matrices $A_{N}$ from size $450 \times 450$ to size $8,380,418 \times 8,380,418$. For the smaller matrices with $d=15$ and $d=31$, the convergence is faster than predicted. For the remaining matrices up to size $8,380,418 \times 8,380,418$, the predicted sublinear convergence can be seen. Furthermore, the convergence is faster for the smoother initial value $\Psi^4$ on the right-hand side of Figure~\ref{fig_fd_limit} compared to $\Psi^2$ on the left-hand side, which also fits perfectly to our theorem. For a suitable space discretization, this behavior is always to be expected. In Figure~\ref{fig_fdmg_timings}, the error of the backward Euler method and the resolvent Krylov subspace method, respectively, is shown versus the computing time in minutes for the discretization with dimension $8,380,418$. The resolvents have been computed by a multigrid method and the exact solution for the computation of the error has been calculated by a discrete fast Fourier transform.  
\begin{figure} \centering
\begin{tikzpicture}

\begin{axis}[
width=5.5cm, height=5.5cm, scale only axis,
xmin=0, xmax=60, ymode=log, ymin=5e-6, ymax=0.5e1,
ytick={1,1e-1,1e-2,1e-3,1e-4,1e-5,1e-6}, yminorticks=false
]
% Dim=2093058 result
%\addplot[mark=circle*, mark size=1.5pt, color=cyan, thick] coordinates {
%};
% Dim=450
%\addplot[color=black, dotted, thick, domain=1:60] {1/x};
\addplot[color=red, thick, dashed] coordinates {
(1, 1.460525e+00)
(2, 1.341170e+00)
(3, 1.242310e+00)
(4, 3.233973e-01)
(5, 3.271920e-01)
(6, 5.492924e-02)
(7, 5.418143e-02)
(8, 3.661023e-02)
(9, 3.517172e-02)
(10, 2.357709e-02)
(11, 2.351782e-02)
(12, 1.077487e-02)
(13, 1.042573e-02)
(14, 6.861581e-03)
(15, 6.852775e-03)
(16, 2.315284e-03)
(17, 2.234010e-03)
(18, 8.436665e-04)
(19, 8.388886e-04)
(20, 2.435626e-04)
(21, 2.392884e-04)
(22, 6.480727e-05)
(23, 6.273721e-05)
(24, 3.130162e-05)
(25, 3.129325e-05)
(26, 2.952542e-05)
(27, 2.929159e-05)
(28, 2.259259e-05)
(29, 2.214768e-05)
(30, 5.224564e-06)
(31, 5.080688e-06)
(32, 4.417084e-06)
(33, 4.316277e-06)
(34, 2.066586e-06)
(35, 2.061319e-06)
(36, 1.770080e-06)
(37, 1.746133e-06)
(38, 6.870776e-07)
(39, 6.689181e-07)
(40, 2.673625e-07)
(41, 2.588341e-07)
(42, 2.096750e-07)
(43, 2.043340e-07)
(44, 9.776572e-08)
(45, 9.783717e-08)
(46, 3.391414e-08)
(47, 3.351886e-08)
(48, 2.663347e-08)
(49, 2.617866e-08)
(50, 6.166084e-09)
(51, 5.962271e-09)
(52, 4.651886e-09)
(53, 4.503428e-09)
(54, 1.423078e-09)
(55, 1.396136e-09)
(56, 1.254232e-09)
(57, 1.251753e-09)
(58, 5.672446e-10)
(59, 5.667269e-10)
};
% Dim=1992
\addplot[color=darkgreen, dashdotted, thick] coordinates {
(1, 1.451799e+00)
(2, 1.368926e+00)
(3, 1.266091e+00)
(4, 3.360985e-01)
(5, 3.400452e-01)
(6, 6.010357e-02)
(7, 6.009593e-02)
(8, 4.395845e-02)
(9, 4.261261e-02)
(10, 3.590543e-02)
(11, 3.533947e-02)
(12, 2.092506e-02)
(13, 2.093467e-02)
(14, 1.887636e-02)
(15, 1.851828e-02)
(16, 1.163068e-02)
(17, 1.159965e-02)
(18, 7.725722e-03)
(19, 7.674382e-03)
(20, 7.003468e-03)
(21, 6.958089e-03)
(22, 4.052673e-03)
(23, 4.055313e-03)
(24, 3.490478e-03)
(25, 3.454317e-03)
(26, 2.484137e-03)
(27, 2.471114e-03)
(28, 2.470970e-03)
(29, 2.472159e-03)
(30, 2.511586e-03)
(31, 2.497582e-03)
(32, 1.321533e-03)
(33, 1.311794e-03)
(34, 1.298736e-03)
(35, 1.291962e-03)
(36, 1.180958e-03)
(37, 1.179880e-03)
(38, 1.146300e-03)
(39, 1.144349e-03)
(40, 7.725814e-04)
(41, 7.640091e-04)
(42, 4.771432e-04)
(43, 4.710522e-04)
(44, 4.552321e-04)
(45, 4.504815e-04)
(46, 2.204393e-04)
(47, 2.201305e-04)
(48, 1.744414e-04)
(49, 1.728164e-04)
(50, 1.557011e-04)
(51, 1.538168e-04)
(52, 8.661910e-05)
(53, 8.508350e-05)
(54, 6.816027e-05)
(55, 6.666873e-05)
(56, 5.174874e-05)
(57, 5.077927e-05)
(58, 3.054782e-05)
(59, 3.054672e-05)
};
% Dim=7938
\addplot[color=blue, thick] coordinates {
(1, 1.449572e+00)
(2, 1.375852e+00)
(3, 1.272020e+00)
(4, 3.393177e-01)
(5, 3.432846e-01)
(6, 6.137383e-02)
(7, 6.155959e-02)
(8, 4.582309e-02)
(9, 4.454989e-02)
(10, 3.835517e-02)
(11, 3.767934e-02)
(12, 2.289348e-02)
(13, 2.288138e-02)
(14, 2.017379e-02)
(15, 1.985292e-02)
(16, 1.253154e-02)
(17, 1.248605e-02)
(18, 8.638485e-03)
(19, 8.625917e-03)
(20, 8.891273e-03)
(21, 8.773039e-03)
(22, 5.909821e-03)
(23, 5.888617e-03)
(24, 4.193328e-03)
(25, 4.189137e-03)
(26, 4.144737e-03)
(27, 4.121400e-03)
(28, 4.127344e-03)
(29, 4.100827e-03)
(30, 2.714794e-03)
(31, 2.712914e-03)
(32, 2.996190e-03)
(33, 2.982773e-03)
(34, 3.055523e-03)
(35, 3.031281e-03)
(36, 2.106604e-03)
(37, 2.106115e-03)
(38, 2.105178e-03)
(39, 2.104576e-03)
(40, 2.199430e-03)
(41, 2.190240e-03)
(42, 1.994271e-03)
(43, 1.985851e-03)
(44, 1.873628e-03)
(45, 1.868578e-03)
(46, 1.652229e-03)
(47, 1.645372e-03)
(48, 1.206788e-03)
(49, 1.206741e-03)
(50, 1.198855e-03)
(51, 1.198478e-03)
(52, 1.254636e-03)
(53, 1.251004e-03)
(54, 1.059186e-03)
(55, 1.056845e-03)
(56, 1.019776e-03)
(57, 1.017140e-03)
(58, 9.416913e-04)
(59, 9.410823e-04)
};
% Dim=32258
\addplot[blue, thick] coordinates {
(1, 1.449012e+00)
(2, 1.377583e+00)
(3, 1.273502e+00)
(4, 3.401257e-01)
(5, 3.440965e-01)
(6, 6.169302e-02)
(7, 6.192753e-02)
(8, 4.629404e-02)
(9, 4.504141e-02)
(10, 3.892837e-02)
(11, 3.822989e-02)
(12, 2.326958e-02)
(13, 2.324999e-02)
(14, 2.041984e-02)
(15, 2.010862e-02)
(16, 1.274402e-02)
(17, 1.268667e-02)
(18, 9.127405e-03)
(19, 9.124768e-03)
(20, 9.371002e-03)
(21, 9.255935e-03)
(22, 6.085890e-03)
(23, 6.064902e-03)
(24, 4.362004e-03)
(25, 4.361297e-03)
(26, 4.373195e-03)
(27, 4.347091e-03)
(28, 4.345064e-03)
(29, 4.316565e-03)
(30, 2.948738e-03)
(31, 2.939268e-03)
(32, 3.202285e-03)
(33, 3.177525e-03)
(34, 2.924398e-03)
(35, 2.908273e-03)
(36, 2.357502e-03)
(37, 2.353416e-03)
(38, 2.450161e-03)
(39, 2.442792e-03)
(40, 2.526948e-03)
(41, 2.515027e-03)
(42, 2.113731e-03)
(43, 2.113069e-03)
(44, 1.843474e-03)
(45, 1.839640e-03)
(46, 1.819483e-03)
(47, 1.811455e-03)
(48, 1.344036e-03)
(49, 1.342149e-03)
(50, 1.409032e-03)
(51, 1.404509e-03)
(52, 1.471839e-03)
(53, 1.465359e-03)
(54, 1.161345e-03)
(55, 1.160921e-03)
(56, 1.144426e-03)
(57, 1.142551e-03)
(58, 1.165053e-03)
(59, 1.161313e-03)
};
% Dim=130050
\addplot[color=blue, thick] coordinates {
(1, 1.448872e+00)
(2, 1.378015e+00)
(3, 1.273872e+00)
(4, 3.403279e-01)
(5, 3.442996e-01)
(6, 6.177297e-02)
(7, 6.201968e-02)
(8, 4.641215e-02)
(9, 4.516481e-02)
(10, 3.906953e-02)
(11, 3.836571e-02)
(12, 2.335625e-02)
(13, 2.333480e-02)
(14, 2.047715e-02)
(15, 2.016796e-02)
(16, 1.280387e-02)
(17, 1.274236e-02)
(18, 9.261838e-03)
(19, 9.261007e-03)
(20, 9.473860e-03)
(21, 9.361615e-03)
(22, 6.132067e-03)
(23, 6.109680e-03)
(24, 4.410664e-03)
(25, 4.410408e-03)
(26, 4.404672e-03)
(27, 4.376202e-03)
(28, 4.391247e-03)
(29, 4.361384e-03)
(30, 3.025167e-03)
(31, 3.013343e-03)
(32, 3.097899e-03)
(33, 3.081055e-03)
(34, 2.397671e-03)
(35, 2.394891e-03)
(36, 2.434681e-03)
(37, 2.427807e-03)
(38, 2.465696e-03)
(39, 2.458624e-03)
(40, 2.529300e-03)
(41, 2.520275e-03)
(42, 2.060719e-03)
(43, 2.060038e-03)
(44, 1.821166e-03)
(45, 1.816128e-03)
(46, 1.857219e-03)
(47, 1.848602e-03)
(48, 1.382471e-03)
(49, 1.379859e-03)
(50, 1.486097e-03)
(51, 1.480709e-03)
(52, 1.528373e-03)
(53, 1.522309e-03)
(54, 1.194856e-03)
(55, 1.193786e-03)
(56, 1.186110e-03)
(57, 1.184390e-03)
(58, 1.228009e-03)
(59, 1.225201e-03)
};
% Dim=522242 result
\addplot[color=blue, thick] coordinates {
(1, 1.448837e+00)
(2, 1.378123e+00)
(3, 1.273964e+00)
(4, 3.403784e-01)
(5, 3.443504e-01)
(6, 6.179296e-02)
(7, 6.204274e-02)
(8, 4.644170e-02)
(9, 4.519569e-02)
(10, 3.910469e-02)
(11, 3.839956e-02)
(12, 2.337746e-02)
(13, 2.335556e-02)
(14, 2.049122e-02)
(15, 2.018250e-02)
(16, 1.281932e-02)
(17, 1.275669e-02)
(18, 9.295776e-03)
(19, 9.295342e-03)
(20, 9.498435e-03)
(21, 9.386959e-03)
(22, 6.144740e-03)
(23, 6.121877e-03)
(24, 4.420142e-03)
(25, 4.419920e-03)
(26, 4.412728e-03)
(27, 4.383233e-03)
(28, 3.506470e-03)
(29, 3.421046e-03)
(30, 3.041117e-03)
(31, 3.029035e-03)
(32, 3.112418e-03)
(33, 3.095627e-03)
(34, 2.682227e-03)
(35, 2.672376e-03)
(36, 2.450298e-03)
(37, 2.443339e-03)
(38, 2.535180e-03)
(39, 2.522246e-03)
(40, 2.388772e-03)
(41, 2.383508e-03)
(42, 1.813668e-03)
(43, 1.811203e-03)
(44, 1.842867e-03)
(45, 1.835776e-03)
(46, 1.873238e-03)
(47, 1.864328e-03)
(48, 1.388333e-03)
(49, 1.385773e-03)
(50, 1.420012e-03)
(51, 1.416243e-03)
(52, 1.535459e-03)
(53, 1.529328e-03)
(54, 1.201197e-03)
(55, 1.200007e-03)
(56, 1.186495e-03)
(57, 1.185336e-03)
(58, 1.243536e-03)
(59, 1.241143e-03)
};
% 2093058 result
\addplot[color=blue, thick] coordinates {
(1, 1.448828e+00)
(2, 1.378150e+00)
(3, 1.273988e+00)
(4, 3.403911e-01)
(5, 3.443631e-01)
(6, 6.179796e-02)
(7, 6.204850e-02)
(8, 4.644909e-02)
(9, 4.520342e-02)
(10, 3.911347e-02)
(11, 3.840802e-02)
(12, 2.338273e-02)
(13, 2.336071e-02)
(14, 2.049472e-02)
(15, 2.018612e-02)
(16, 1.282321e-02)
(17, 1.276029e-02)
(18, 9.304274e-03)
(19, 9.303937e-03)
(20, 9.504507e-03)
(21, 9.393226e-03)
(22, 6.147984e-03)
(23, 6.124995e-03)
(24, 4.422308e-03)
(25, 4.422090e-03)
(26, 4.414409e-03)
(27, 4.384745e-03)
(28, 4.423891e-03)
(29, 4.367777e-03)
(30, 3.044762e-03)
(31, 3.032627e-03)
(32, 3.166795e-03)
(33, 3.146779e-03)
(34, 2.907428e-03)
(35, 2.897576e-03)
(36, 2.452472e-03)
(37, 2.445580e-03)
(38, 2.500900e-03)
(39, 2.489060e-03)
(40, 2.375560e-03)
(41, 2.370680e-03)
(42, 1.792343e-03)
(43, 1.787897e-03)
(44, 1.814140e-03)
(45, 1.807026e-03)
(46, 1.969272e-03)
(47, 1.960127e-03)
(48, 1.390052e-03)
(49, 1.387436e-03)
(50, 1.456683e-03)
(51, 1.452143e-03)
(52, 1.540023e-03)
(53, 1.533956e-03)
(54, 1.203400e-03)
(55, 1.202166e-03)
(56, 1.219305e-03)
(57, 1.218461e-03)
(58, 1.252456e-03)
(59, 1.250072e-03)
};
% 8380418 Million result
\addplot[color=blue, thick] coordinates {
(1, 1.448826e+00)
(2, 1.378157e+00)
(3, 1.273993e+00)
(4, 3.403942e-01)
(5, 3.443663e-01)
(6, 6.179921e-02)
(7, 6.204994e-02)
(8, 4.645094e-02)
(9, 4.520535e-02)
(10, 3.911566e-02)
(11, 3.841013e-02)
(12, 2.338405e-02)
(13, 2.336200e-02)
(14, 2.049559e-02)
(15, 2.018703e-02)
(16, 1.282419e-02)
(17, 1.276120e-02)
(18, 9.306400e-03)
(19, 9.306086e-03)
(20, 9.506021e-03)
(21, 9.394789e-03)
(22, 6.148799e-03)
(23, 6.125779e-03)
(24, 4.422836e-03)
(25, 4.422619e-03)
(26, 4.415000e-03)
(27, 4.385247e-03)
(28, 3.037915e-03)
(29, 3.029148e-03)
(30, 3.054532e-03)
(31, 3.042891e-03)
(32, 3.120249e-03)
(33, 3.103350e-03)
(34, 2.742305e-03)
(35, 2.732243e-03)
(36, 2.453023e-03)
(37, 2.446151e-03)
(38, 2.432931e-03)
(39, 2.425099e-03)
(40, 2.368051e-03)
(41, 2.362993e-03)
(42, 1.813671e-03)
(43, 1.806986e-03)
(44, 1.838392e-03)
(45, 1.831158e-03)
(46, 1.512797e-03)
(47, 1.512128e-03)
(48, 1.395253e-03)
(49, 1.392247e-03)
(50, 1.484158e-03)
(51, 1.478900e-03)
(52, 1.545876e-03)
(53, 1.539882e-03)
(54, 1.203942e-03)
(55, 1.202695e-03)
(56, 1.250609e-03)
(57, 1.248260e-03)
(58, 1.254371e-03)
(59, 1.252045e-03)
};
\end{axis}
\end{tikzpicture}%~
\input{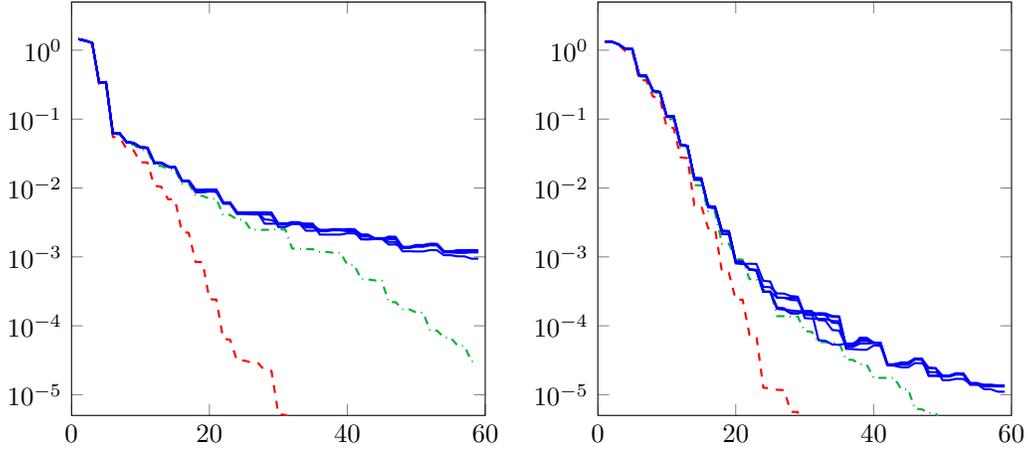}
\caption{Error versus Krylov subspace dimension for the approximation of $e^{\tau A_{N}}\Psi^q$ with $q=2$ (left-hand side) and $q=4$ (right-hand side) for matrices of dimension 8,380,418; 2,093,058; 522,242; 130,050; 32,258; 7938 (blue solid lines); 1922 (green dash-dotted line); 450 (red dashed line).}
\label{fig_fd_limit}
\end{figure}

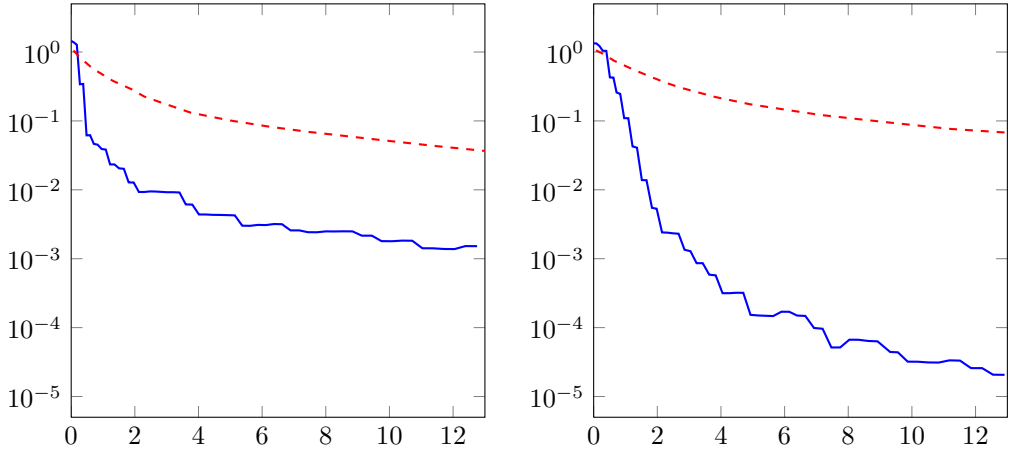
\begin{figure} \centering
\begin{tikzpicture}

\begin{axis}[
width=5.5cm, height=5.5cm, scale only axis,
xmin=0, xmax=13, ymode=log, ymin=5e-6, ymax=0.5e1,
ytick={1,1e-1,1e-2,1e-3,1e-4,1e-5}, yminorticks=false
]
%\node[blue] at (axis cs: 13.9,2e-9) {\small $q=8$};
%\node[blue] at (axis cs: 13.9,3e-8) {\small $q=6$};
%\node[blue] at (axis cs: 13.9,5e-7) {\small $q=4$};
%\node[blue] at (axis cs: 13.9,2e-5) {\small $q=2$};
% gamma=1, Dim 8380418 , smoothness 4, tau=0.5, reskry
%\addplot[mark=square*, mark size=1.5pt, color=blue, thick] coordinates {
\addplot[blue, solid, thick] coordinates {
(0.003748, 1.448826e+00)
(0.084215, 1.378157e+00)
(0.176802, 1.273790e+00)
(0.274142, 3.403850e-01)
(0.376394, 3.443200e-01)
(0.482685, 6.179011e-02)
(0.594700, 6.204134e-02)
(0.710460, 4.644390e-02)
(0.831215, 4.520094e-02)
(0.956823, 3.911189e-02)
(1.087316, 3.840339e-02)
(1.221375, 2.338308e-02)
(1.360787, 2.336042e-02)
(1.504484, 2.049462e-02)
(1.654652, 2.018507e-02)
(1.805801, 1.282403e-02)
(1.964499, 1.275916e-02)
(2.126069, 9.303995e-03)
(2.295964, 9.303734e-03)
(2.475347, 9.483413e-03)
(2.651643, 9.445405e-03)
(2.830435, 9.330937e-03)
(3.016178, 9.224401e-03)
(3.206220, 9.230884e-03)
(3.403354, 9.144187e-03)
(3.600610, 6.139648e-03)
(3.805789, 6.103526e-03)
(4.014108, 4.387101e-03)
(4.229707, 4.385517e-03)
(4.450970, 4.332874e-03)
(4.673704, 4.320306e-03)
(4.937665, 4.287740e-03)
(5.142834, 4.255345e-03)
(5.374255, 3.015001e-03)
(5.617751, 3.002538e-03)
(5.863095, 3.096201e-03)
(6.114813, 3.077386e-03)
(6.367576, 3.186331e-03)
(6.625034, 3.167140e-03)
(6.893008, 2.577441e-03)
(7.166501, 2.577122e-03)
(7.439334, 2.425682e-03)
(7.716772, 2.419542e-03)
(8.001566, 2.494498e-03)
(8.252488, 2.486302e-03)
(8.537321, 2.499239e-03)
(8.841811, 2.491671e-03)
(9.131564, 2.160919e-03)
(9.448717, 2.160506e-03)
(9.751045, 1.800100e-03)
(10.075577, 1.793781e-03)
(10.407200, 1.831246e-03)
(10.717321, 1.823845e-03)
(11.035292, 1.418649e-03)
(11.371022, 1.415931e-03)
(11.711837, 1.384387e-03)
(12.047452, 1.380181e-03)
(12.386108, 1.524929e-03)
(12.754854, 1.520605e-03)
};
\addplot[color=red, dashed, thick] 
% gamma=1, tau=0.5, q=4, Dim=522242, ieul
coordinates {
(0.073759, 1.053864e+00)
(0.221110, 8.807128e-01)
(0.368490, 7.641378e-01)
(0.674854, 5.766928e-01)
(1.303201, 3.828693e-01)
(1.918015, 2.849586e-01)
(2.288028, 2.265852e-01)
(2.781232, 1.879756e-01)
(3.756280, 1.317853e-01)
(4.993787, 1.014784e-01)
(6.221598, 8.253106e-02)
(7.435901, 6.956720e-02)
(9.700778, 5.297587e-02)
(11.532057, 4.280982e-02)
(13.898083, 3.338197e-02)
};
\end{axis}
\end{tikzpicture}%~~~
\begin{tikzpicture}

\begin{axis}[
width=5.5cm, height=5.5cm, scale only axis,
xmin=0, xmax=13, ymode=log, ymin=5e-6, ymax=0.5e1,
ytick={1,1e-1,1e-2,1e-3,1e-4,1e-5}, yminorticks=false
]
%\node[blue] at (axis cs: 13.9,2e-9) {\small $q=8$};
%\node[blue] at (axis cs: 13.9,3e-8) {\small $q=6$};
%\node[blue] at (axis cs: 13.9,5e-7) {\small $q=4$};
%\node[blue] at (axis cs: 13.9,2e-5) {\small $q=2$};
% gamma=1, Dim 8380418 , smoothness 4, tau=0.5, reskry
\addplot[color=blue, thick] coordinates {
(0.004017, 1.336329e+00)
(0.088331, 1.331223e+00)
(0.184509, 1.217383e+00)
(0.284022, 1.043821e+00)
(0.397462, 1.037429e+00)
(0.510822, 4.278253e-01)
(0.622018, 4.255397e-01)
(0.720264, 2.583344e-01)
(0.834189, 2.469174e-01)
(0.958698, 1.095925e-01)
(1.088279, 1.096033e-01)
(1.222761, 4.260897e-02)
(1.361437, 4.089121e-02)
(1.514731, 1.385101e-02)
(1.661135, 1.379152e-02)
(1.832865, 5.476537e-03)
(1.979980, 5.312379e-03)
(2.149273, 2.408327e-03)
(2.316390, 2.390351e-03)
(2.489493, 2.337596e-03)
(2.668243, 2.306766e-03)
(2.856291, 1.348507e-03)
(3.043005, 1.285433e-03)
(3.234293, 8.617175e-04)
(3.428167, 8.614540e-04)
(3.631290, 5.868296e-04)
(3.834993, 5.736784e-04)
(4.050477, 3.159459e-04)
(4.263556, 3.160093e-04)
(4.480925, 3.205360e-04)
(4.701986, 3.197620e-04)
(4.928152, 1.533397e-04)
(5.159563, 1.501629e-04)
(5.405119, 1.484724e-04)
(5.641393, 1.471103e-04)
(5.900873, 1.700968e-04)
(6.146392, 1.697563e-04)
(6.393655, 1.496397e-04)
(6.651527, 1.478114e-04)
(6.924241, 9.878218e-05)
(7.195016, 9.624876e-05)
(7.469668, 5.148333e-05)
(7.747323, 5.155273e-05)
(8.033838, 6.674378e-05)
(8.317317, 6.666157e-05)
(8.625978, 6.400833e-05)
(8.919734, 6.322491e-05)
(9.317393, 4.429773e-05)
(9.559944, 4.381350e-05)
(9.868687, 3.204996e-05)
(10.182328, 3.199064e-05)
(10.509896, 3.123885e-05)
(10.839505, 3.109250e-05)
(11.180168, 3.348574e-05)
(11.511488, 3.319523e-05)
(11.858424, 2.582381e-05)
(12.208816, 2.580753e-05)
(12.548546, 2.073994e-05)
(12.911725, 2.064762e-05)
};
\addplot[color=red, dashed, thick] 
% gamma=1, tau=0.5, q=4, Dim=522242, ieul
coordinates {
(0.072422, 1.056976e+00)
(0.218380, 9.828198e-01)
(0.352970, 9.053014e-01)
(0.638314, 7.485975e-01)
(1.263437, 5.516419e-01)
(1.801386, 4.356458e-01)
(2.257602, 3.595609e-01)
(2.705761, 3.059252e-01)
(3.817614, 2.226174e-01)
(4.900937, 1.748920e-01)
(6.092638, 1.440031e-01)
(7.102697, 1.223892e-01)
(9.343668, 9.414837e-02)
(11.212782, 7.651858e-02)
(13.679340, 6.446884e-02)
%(14.873037, 6.065376e-02)
%(14.312536, 5.994465e-02)
};
\end{axis}
\end{tikzpicture}%
\caption{Error versus time in minutes for the approximation of $e^{\tau A_{N}}\Psi^q$ with $q=2$ (left-hand side) and $q=4$ (right-hand side) for the matrix of dimension $8,380,418$ using the implicit Euler method (red dashed line) and the resolvent Krylov subspace method (blue solid line).}
\label{fig_fdmg_timings}
\end{figure}

\subsection{Finite-element discretization of a wave equation on a non-standard domain} \label{subsec:FE}

For a trapezoidal domain $\Omega$ with a slit in the upper half, we consider the two-dimensional wave equation
\[
u''(t) = \Delta u(t) - u(t)\,, \quad u(0) = u_0\,, \quad u'(0) = u'_0
\]
with homogeneous Neumann boundary conditions which can be represented by the first-oder system 
\begin{equation} \label{wave_1storder}
y'(t) = \left[ \begin{array}{c}
           u(t) \\
           u'(t)
        \end{array} \right]'
      = \left[ \begin{array}{cc}
            0 & I \\
            \Delta - I & 0
        \end{array} \right]
        \left[ \begin{array}{c}
           u(t) \\
           u'(t)
        \end{array} \right]
      = A y(t)\,, \quad
y(0) = y_0 = \left[ \begin{array}{c}
                u_0 \\
                u_0'
             \end{array} \right],
\end{equation}
where $\Delta$ is the Laplacian including the boundary conditions. 
It can be shown that $A$ is the generator of a contraction semigroup with respect to the inner product analogous to \eqref{skalprodhomwave}, where the operator $\sqrt{-\Delta}$ is replaced by $\sqrt{I-\Delta}$. The domain of $A$ is given by $\mathcal{D}(A) = \mathcal{D}(\Delta) \times \mathcal{D}(\sqrt{I-\Delta})$. We use here the equivalent norms that are commonly used for finite elements. For $v = [ v_1, v_2 ]^T$ and $w = [ w_1, w_2 ]^T$ with $v_1, w_1 \in H^1(\Omega)$, $\nabla_n v_1 = \nabla_n w_1 = 0$ on $\partial \Omega$ and $v_2, w_2 \in L^2(\Omega)$, the inner product reads
\[
(v,w)= (\nabla v_1, \nabla w_1)_{L^2(\Omega)}+ (v_1,w_1)_{L^2(\Omega)} + (v_2,w_2)_{L^2(\Omega)}\,.
\]
We solve equation \eqref{wave_1storder} numerically by using finite elements with $N$ linear nodal basis functions $\phi_k \in H^1(\Omega)$, $k = 1, \ldots, N$. This leads to the semi-discrete formulation
\begin{equation} \label{wave_discrete}
\left[ \begin{array}{cc}
   M & 0 \\
   0 & M        
\end{array} \right]
\left[ \begin{array}{c}
   \Psi_1(t) \\
   \Psi_2(t)       
\end{array} \right]'
=
\left[ \begin{array}{cc}
   0 & M \\
   S-M & 0        
\end{array} \right]
\left[ \begin{array}{c}
   \Psi_1(t) \\
   \Psi_2(t)       
\end{array} \right], \quad
\left[ \begin{array}{c}
   \Psi_1(0) \\
   \Psi_2(0)       
\end{array} \right]
=
\left[ \begin{array}{c}
   \Psi_{1,0} \\
   \Psi_{2,0}       
\end{array} \right],
\end{equation}
where the vectors $\Psi_1(t), \Psi_2(t) \in \R^N$ are the coordinate vectors for $u(t)$ and $u'(t)$. The mass matrix $M \in \R^{N \times N}$ and the stiffness matrix $S \in \R^{N \times N}$ are defined by $(M)_{jk} = (\phi_j,\phi_k)_{L^2(\Omega)}$ and $(S)_{jk} = -(\nabla\phi_j,\nabla\phi_k)_{L^2(\Omega)}$ for $j, k = 1, \ldots, N$. Multiplying \eqref{wave_discrete} from the left with the inverse of the block diagonal matrix $\text{diag}(M,M)$, we end up with the initial value problem 
\[
\Psi'(t) = A_N \Psi(t)\,, \quad \Psi(0) = \Psi_0
\]
with $\Psi(t) = [ \Psi_1(t), \Psi_2(t) ]^T$ and $\Psi_0 = [ \Psi_{1,0}, \Psi_{2,0} ]^T$. For the initial data $y_0^q$ in \eqref{wave_1storder}, we choose $u_0 = u'_0 = g_0^q/\|[g_0^q,g_0^q]^T\|$ with
\[
g_0^q : \left\{
  \begin{array}{rcl}
     \Omega & \rightarrow & \R\,, \\[1ex]
     (x,y) & \mapsto & (x+1)^{2q} (x-1)^{2q} (y-1)^{2q} (y-2)^{2q}\,,
  \end{array}
\right.
\]
where $y_0^q \in \mathcal{D}(A^q)$ but $y_0^q \not\in \mathcal{D}(A^{q+1})$. For the initial data used in our numerical experiment, we evolved $y_0^q$ with the given wave equation for time $\tau = 0.5$. Its discrete counterpart, depicted in Figure~\ref{fig_wave_initial&tau3} on the left-hand side, we denote by $\Psi_0^q$. Moreover, we show on the right-hand side the numerical solution for time $\tau = 3$. In Figure~\ref{fig_wave_error}, the obtained error curves are plotted for a coarse grid with 31,232 triangles and 160,323 nodes as well as a fine grid with 499,712 triangles and 251,520 nodes. As parameters, we have chosen the time step size $\tau = 0.05$ and the smoothness indices $q=1$ and $q=3$.

In Figure~\ref{fig_wave_error}, the obtained error curves for the approximation of $e^{\tau A_N} \Psi_0^q$ in the Krylov subspace $\mathcal{K}_n((15-\tau A_N)^{-1}, \Psi_0^q)$ are plotted for the coarse grid as well as for the a fine grid. As parameters, we have chosen the time step size $\tau = 0.05$ and the smoothness indices $q = 1$ and $q = 3$. The linear systems with the matrix $15 - \tau A_N$ were solved again by a multigrid method.

\begin{figure} \centering
\includegraphics[width=7cm, trim=5.2cm 9.2cm 2.5cm 11.2cm, clip]{./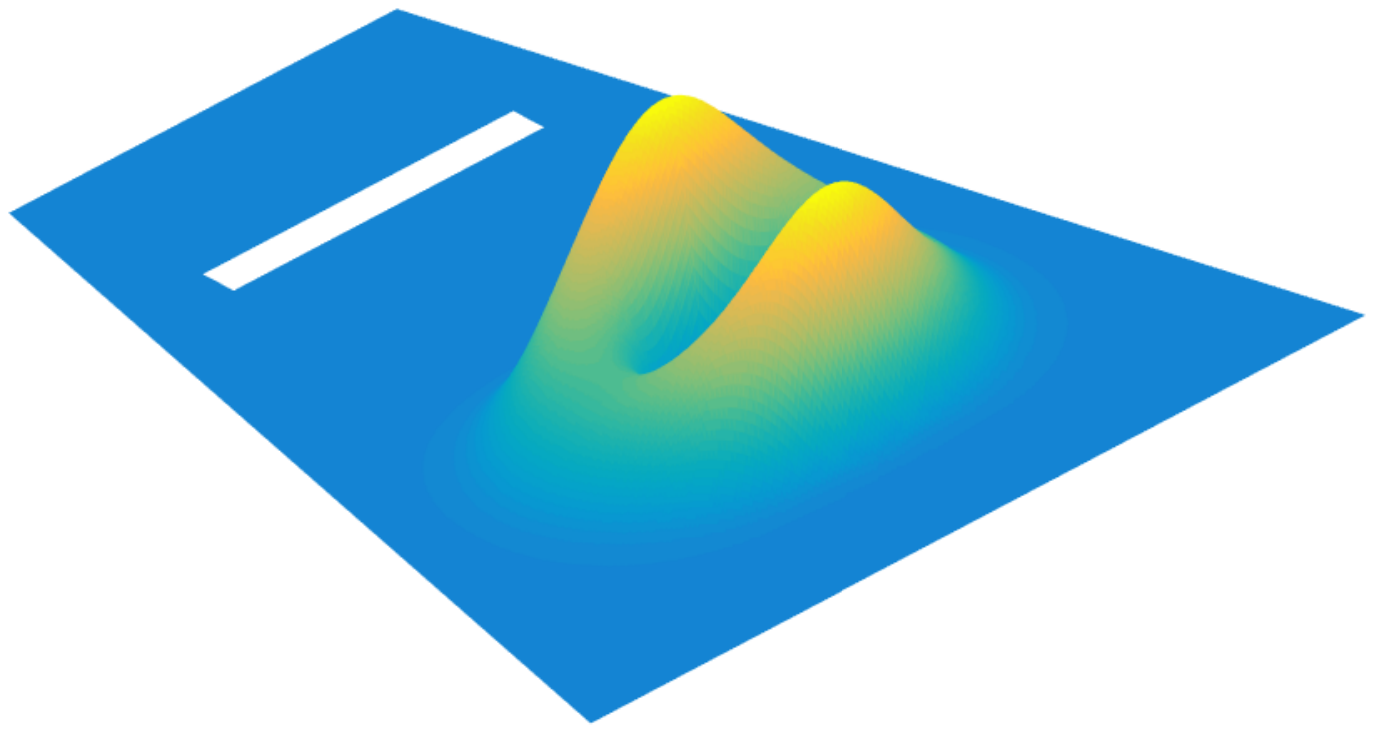} 
\hspace*{0.5cm}
\includegraphics[width=7cm, trim=5.2cm 8.5cm 2.5cm 10cm, clip]{./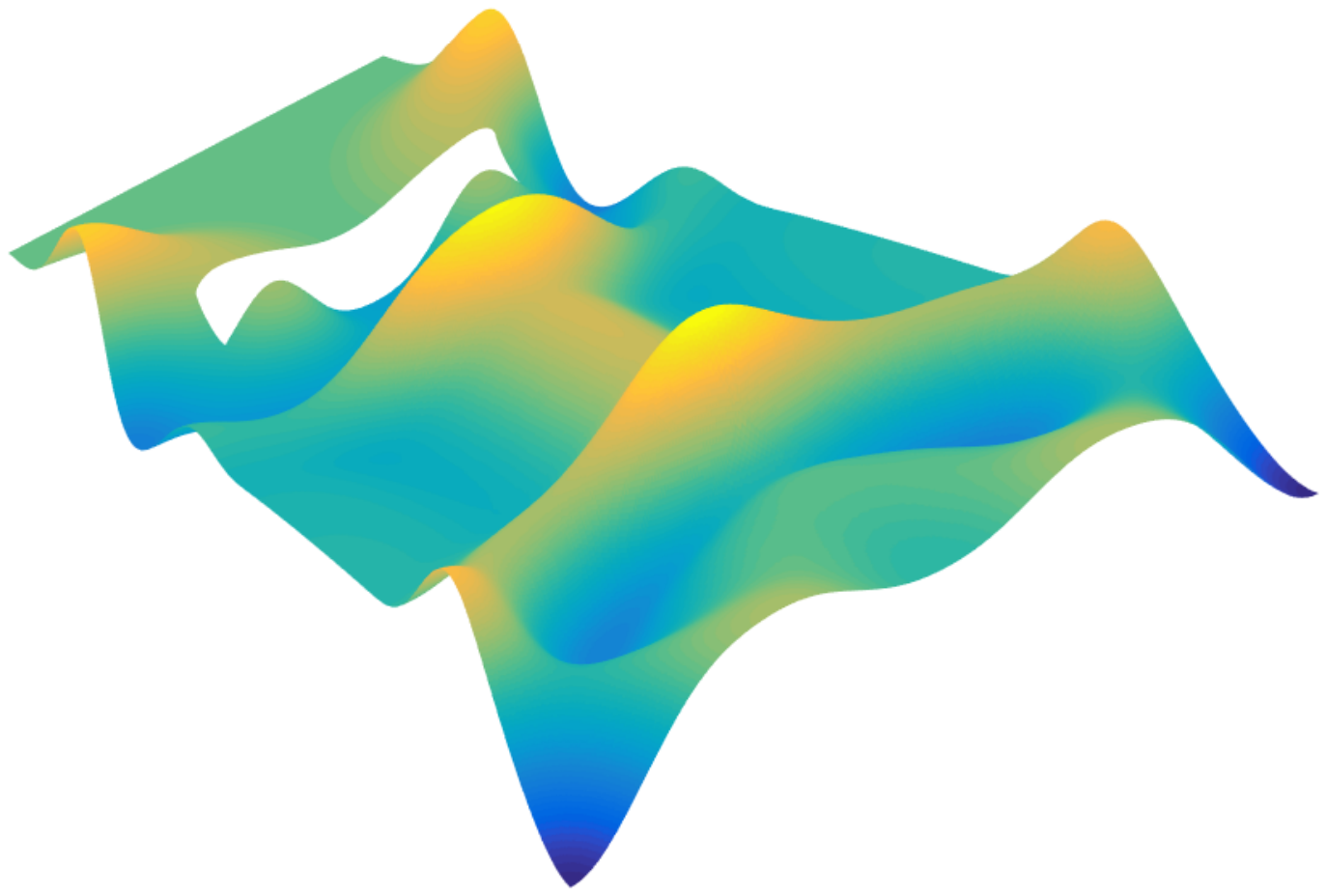}
\caption{Initial value $\Psi_{1,0}^3$ (left) and the corresponding numerical solution $\Psi_1^3$ at time $\tau=3$ (right).}
\label{fig_wave_initial&tau3}
\end{figure}

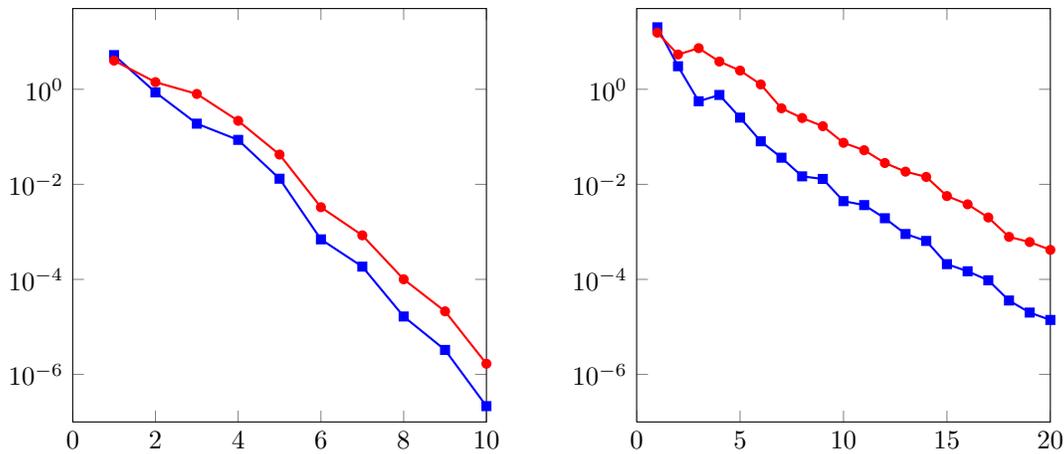
\begin{figure} \centering
% This file was created by matlab2tikz.
% Minimal pgfplots version: 1.3
%
%The latest updates can be retrieved from
%  http://www.mathworks.com/matlabcentral/fileexchange/22022-matlab2tikz
%where you can also make suggestions and rate matlab2tikz.
%
\begin{tikzpicture}

\begin{axis}[
width=5.5cm, height=5.5cm, scale only axis,
xmin=0, xmax=10, ymode=log, ymin=1e-7, ymax=5e1,
ytick={1,1e-2,1e-4,1e-6}, yminorticks=false
]
%\node[blue] at (axis cs: 13.9,2e-9) {\small $q=8$};
%\node[blue] at (axis cs: 13.9,3e-8) {\small $q=6$};
%\node[blue] at (axis cs: 13.9,5e-7) {\small $q=4$};
%\node[blue] at (axis cs: 13.9,2e-5) {\small $q=2$};
\addplot[mark=square*, mark size=1.5pt, color=blue, thick] 
  table[row sep=crcr]{%
% meshsize=7, gamma=15, tau=0.05, q=3
1	5.242755598125609\\
2	0.863377683927727\\
3	0.189181842401970\\
4	0.086165326284569\\
5	0.013130385678401\\
6	0.000695392510823\\
7	0.000185520704550\\
8	0.000016565317841\\
9	0.000003278479751\\
10	0.000000215121084\\
11	0.000000080078045\\
12	0.000000004216910\\
13	0.000000002300867\\
14	0.000000000899929\\
15	0.000000000567921\\
16	0.000000000618818\\
17	0.000000000257852\\
18	0.000000000136463\\
19	0.000000000082500\\
20	0.000000000083149\\
};
% \addplot[mark=otimes*, mark size=1.5pt, color=green, thick] 
%   table[row sep=crcr]{%
% % meshsize=7, gamma=15, tau=0.05, q=2
% 1	4.480195455219681\\
% 2	0.680045341262620\\
% 3	0.178824039981802\\
% 4	0.077149852890335\\
% 5	0.010849755410080\\
% 6	0.000695709182455\\
% 7	0.000182831689229\\
% 8	0.000016381695219\\
% 9	0.000003331535572\\
% 10	0.000000234076112\\
% 11	0.000000094649269\\
% 12	0.000000005471781\\
% 13	0.000000002583741\\
% 14	0.000000000743725\\
% 15	0.000000000578714\\
% 16	0.000000000553736\\
% 17	0.000000000272493\\
% 18	0.000000000080486\\
% 19	0.000000000053240\\
% 20	0.000000000046866\\
% };
\addplot[mark=otimes*, mark size=1.5pt, color=red, thick] 
  table[row sep=crcr]{%
% meshsize=7, gamma=15, tau=0.05, q=1
1	4.018265447900666\\
2	1.414486595632801\\
3	0.798862104551477\\
4	0.217273222616185\\
5	0.042351815957628\\
6	0.003300159404799\\
7	0.000845185594807\\
8	0.000100466136334\\
9	0.000021265986723\\
10	0.000001679510568\\
11	0.000000598908516\\
12	0.000000032039214\\
13	0.000000013928951\\
14	0.000000002767455\\
15	0.000000000489732\\
16	0.000000000479634\\
17	0.000000000337725\\
18	0.000000000346319\\
19	0.000000000143842\\
20	0.000000000116174\\
};
\end{axis}
\end{tikzpicture}%
\hspace*{0.5cm}
% This file was created by matlab2tikz.
% Minimal pgfplots version: 1.3
%
%The latest updates can be retrieved from
%  http://www.mathworks.com/matlabcentral/fileexchange/22022-matlab2tikz
%where you can also make suggestions and rate matlab2tikz.
%
\begin{tikzpicture}

\begin{axis}[
width=5.5cm, height=5.5cm, scale only axis,
xmin=0, xmax=20, ymode=log, ymin=1e-7, ymax=5e1,
ytick={1,1e-2,1e-4,1e-6}, yminorticks=false
]
%\node[blue] at (axis cs: 13.9,2e-9) {\small $q=8$};
%\node[blue] at (axis cs: 13.9,3e-8) {\small $q=6$};
%\node[blue] at (axis cs: 13.9,5e-7) {\small $q=4$};
%\node[blue] at (axis cs: 13.9,2e-5) {\small $q=2$};
\addplot[mark=square*, mark size=1.5pt, color=blue, thick] 
  table[row sep=crcr]{%
% meshsize=7, gamma=15, tau=0.05, q=3
1	20.17653259252523\\
2	3.055039497289574\\
3	0.560668277529479\\
4	0.760253514669822\\
5	0.253621107486996\\
6	0.080459028702713\\
7	0.036352100736870\\
8	0.014722182286621\\
9	0.012982497424705\\
10	0.004432598143705\\
11	0.003657759487273\\
12	0.001936461361585\\
13	0.000901647309547\\
14	0.000644084864813\\
15	0.000209171706896\\
16	0.000148222323800\\
17	0.000095717445254\\
18	0.000035933402022\\
19	0.000020040048595\\
20	0.000014007244769\\
};
% \addplot[mark=otimes*, mark size=1.5pt, color=green, thick] 
%   table[row sep=crcr]{%
% % meshsize=7, gamma=15, tau=0.05, q=2
% 1	17.46176456268585\\
% 2	2.387556887647588\\
% 3	0.553049403617978\\
% 4	0.812081164608520\\
% 5	0.209448011153622\\
% 6	0.083629503290974\\
% 7	0.037953270045129\\
% 8	0.018503067141904\\
% 9	0.014355389550797\\
% 10	0.005249204180326\\
% 11	0.004047670069809\\
% 12	0.002195783764559\\
% 13	0.001311940458077\\
% 14	0.000936159369335\\
% 15	0.000451746375918\\
% 16	0.000795331591159\\
% 17	0.000364111835806\\
% 18	0.000576314156704\\
% 19	0.000149807893119\\
% 20	0.000076843327737\\
% };
\addplot[mark=otimes*, mark size=1.5pt, color=red, thick] 
  table[row sep=crcr]{%
% meshsize=7, gamma=15, tau=0.05, q=1
1	15.53062224988302\\
2	5.394461921493565\\
3	7.335553274781717\\
4	3.836832637542929\\
5	2.478956688745302\\
6	1.266042892054227\\
7	0.398348449176463\\
8	0.248822590168347\\
9	0.167241345278061\\
10	0.074881151644660\\
11	0.052265399290188\\
12	0.028124061451837\\
13	0.018495151586854\\
14	0.014324199373596\\
15	0.005669554984814\\
16	0.003798619544331\\
17	0.002004907251579\\
18	0.000784886576884\\
19	0.000610146829280\\
20	0.000417643216015\\
};
\end{axis}
\end{tikzpicture}%
\caption{Plot of the error versus dimension of the Krylov subspace $\mathcal{K}_m((15-A_N)^{-1}, \Psi_0^q)$ for $N = 16,032$ (left) and $N = 251,520$ (right), $\tau=0.05$, and initial vectors $\Psi_0^q$ with $q = 1, 3$ (circle-, square-marked line).}
\label{fig_wave_error}
\end{figure}

%\begin{figure}
%\include{./pictures/wave2dFD}
%\end{figure}
 
%%%%%%%%%%%%%%%%%%%%%%%%%%%%%%%%%%%%%%%%%%
\section{Conclusion} \label{sec:conclusion}
%%%%%%%%%%%%%%%%%%%%%%%%%%%%%%%%%%%%%%%%%%
In this work, we could show that the resolvent Krylov subspace method is suitable for the approximation of a large set of operator functions. For the semigroup and related operator functions, convergence rates dependent on the smoothness of the initial data have been presented. In contrast to standard methods, the faster convergence for smoother initial data is automatic, that is, the method does not need to be altered in any way to achieve the faster convergence for smoother initial data. The theoretical findings have been illustrated by numerical experiments.   

%%%%%%%%%%%%%%%%%%%%%%%%%%%%%%%%%%%%%%%%%%
\section*{Acknowledgements}
%%%%%%%%%%%%%%%%%%%%%%%%%%%%%%%%%%%%%%%%%%

This work has been supported by the Deutsche Forschungsgemeinschaft (DFG) via GR~3787/1-1.

\bibliographystyle{plain} 
\bibliography{lit}

\end{document}